\documentclass[11pt]{amsart}
\usepackage{amssymb,amsmath,amsfonts,amscd,euscript}
\usepackage[all]{xy}
\usepackage{color}

\usepackage[backref=page]{hyperref}
\usepackage{todonotes}

\usepackage{amsthm,enumitem,bbm,pict2e,xcolor}
\usepackage{indentfirst}

\newcommand{\nc}{\newcommand}

\numberwithin{equation}{section}
\newtheorem{thm}{Theorem}[section]
\newtheorem{prop}[thm]{Proposition}
\newtheorem{lem}[thm]{Lemma}
\newtheorem{cor}[thm]{Corollary}
\newtheorem{rem}[thm]{Remark}

\newtheorem{example}[thm]{Example}

\newtheorem{theoremA}{Theorem}

\nc{\gl}{\mathfrak{gl}}
\nc{\GL}{\mathfrak{GL}}
\nc{\g}{\mathfrak{g}}
\nc{\gh}{\widehat\g}
\nc{\h}{\mathfrak{h}}
\nc{\la}{\lambda}
\nc{\La}{\Lambda}
\nc{\al}{\alpha }
\nc{\be}{\beta }
\nc{\ve}{\varepsilon }
\nc{\om}{\omega }

\nc{\ta}{\theta}
\nc{\veps}{\varepsilon}
\nc{\ch}{{\mathop {\rm ch}}}
\nc{\Tr}{{\mathop {\rm Tr}\,}}
\nc{\Id}{{\mathop {\rm Id}}}
\nc{\bra}{\langle}
\nc{\ket}{\rangle}
\nc{\x}{{\bf x}}
\nc{\bs}{{\bf s}}
\nc{\bp}{{\bf p}}
\nc{\bt}{{\bf t}}
\nc{\pa}{\partial}
\nc{\ld}{\ldots}
\nc{\cd}{\cdots}
\nc{\hk}{\hookrightarrow}
\nc{\T}{\otimes}
\nc{\gr}{\mathrm{gr}}
\nc{\Gr}{\mathrm{Gr}}
\nc{\ov}{\overline}

\nc{\cO}{\mathcal O}
\nc{\cF}{\mathcal F}
\nc{\cL}{\mathcal L}
\nc{\msl}{\mathfrak{sl}}
\nc{\mgl}{\mathfrak{gl}}
\nc{\U}{\mathrm U}
\nc{\V}{\EuScript V}
\nc{\bH}{\EuScript H}
\nc{\Res}{\mathrm{Res\ }}

\newcommand{\bC}{{\mathbb C}}
\newcommand{\bR}{{\mathbb R}}
\newcommand{\bZ}{{\mathbb Z}}

\newcommand{\bP}{{\mathbb P}}

\newcommand{\bG}{{\mathbb G}}
\newcommand{\fp}{{\mathfrak p}}
\newcommand{\fg}{{\mathfrak g}}
\newcommand{\fb}{{\mathfrak b}}
\newcommand{\fh}{{\mathfrak h}}

\newcommand{\fn}{{\mathfrak n}}
\newcommand{\fa}{{\mathfrak a}}

\newcommand{\br}{{\bf r}}

\newcommand{\Fl}{\EuScript{F}}
\DeclareMathOperator{\Hom}{Hom}
\DeclareMathOperator{\spa}{span}

\newcommand{\bm}{{\bf m}}

\begin{document}

\title[Truncated Grassmannians]
{Truncated Grassmannians: polytopes, representations, and Cauchy formulas}

\author[Feigin]{Evgeny Feigin}
\address{Evgeny Feigin:\newline
School of Mathematical Sciences, Tel Aviv University, Tel Aviv
69978, Israel
}
\email{evgfeig@gmail.com}

\author[Makhlin]{Igor Makhlin}
\address{Igor Makhlin: 
% \newline
% Technische Universit\"at Berlin, Fakult\"at II Mathematik und Naturwissenschaften, Institut f\"ur Mathematik, FG Diskrete Mathematik/Geometrie
}
\email{iymakhlin@gmail.com}

\begin{abstract}
The goal of this paper is to develop algebraic and combinatorial techniques
for studying the geometric properties of truncated Grassmannians, a class of algebraic
varieties obtained via certain projections of classical Grassmannians within their
Pl\"ucker embeddings. We introduce a class of cyclic representations of maximal 
parabolic subalgebras of the general linear Lie algebra, which are proved to serve as 
duals of the homogeneous components of coordinate rings of truncated 
Grassmannians. We prove that the characters of these representations are computed via
truncated Cauchy formulas and establish a connection with coordinate rings of
complete collineations. We also construct monomial bases of these representations
in terms of certain generalized chain polytopes and use the RSK correspondence to 
establish connections with Cauchy-type formulas and with Gelfand--Tsetlin 
patterns. 
\end{abstract}

\maketitle

\section{Introduction}
Classical Grassmann varieties play an important role in various fields of
mathematics \cite{Fu97,H92,MS74} due to a vast number of connections and
applications. In particular, their geometric and topological
properties are closely related to the representation theory of $\mgl_n$ and 
can be described via deep and beautiful combinatorics. The goal of this paper is to
extend the algebraic and geometric constructions to the larger family of truncated 
Grassmannians.

Let $V$ be an $n$-dimensional complex vector space with a basis $\{v_i\}_{i=1}^n$. 
We fix $d<n$. To simplify notation, we always assume that $d\le n-d$. 
The classical Grassmann variety $\Gr_d(V)$ 
admits the Pl\"ucker embedding into the projective space $\bP(\bigwedge^dV)$.
The wedge power $\bigwedge^dV$ is isomorphic to the $d$-th fundamental representation
$V_{\om_d}$; in particular, the highest weight vector is equal to the wedge product
$v_1\wedge\dots\wedge v_d$ of the first $d$ basis vectors of $V$.

We define the degree of a wedge monomial in the $v_i$ to be
the number of indices larger than $d$. This degree coincides with the 
PBW degree which counts how far a vector is from the highest weight vector \cite{FFL11,Fe12,Fe23}.
For each $r=0,\dots,d$, we define the space $V^r = V_{\om_d}^r$ as the
quotient of $V_{\om_d}$ by the span of all wedge monomials of degree larger than $r$.
In particular, $V^0$ is one-dimensional and $V^d$ coincides with $V_{\om_d}$. 
One easily sees that all $V^r$ are acted upon by the maximal parabolic
subalgebra $\fp^-_d\subset\mgl(V)$ and that all $V^r$ are cyclic with respect to the
abelian radical $\fa_d\subset \fp^-_d$. The parabolic subalgebra $\fp_d^-$ contains the 
direct sum $\mgl(V_d)\oplus \mgl(V_{n-d})$, where $V=V_d\oplus V_{n-d}$ is the direct sum
decomposition into the span of the first $d$ and the last $n-d$ basis vectors.

The truncated Grassmannian $\Gr^r\subset \bP(V^r)$ 
is defined as the closure of the $\exp(\fa_d)$-orbit passing through the highest weight 
line $[v^r]$ \cite{FO24,Fe26}. Truncated Grassmannians are closely related to the partial versions of spaces 
of complete collineations \cite{L88,MMMSV24,Vain84,Th99} and are useful for the study of coupled cluster theory \cite{FSS25,FGNP26,Sv25} and 
blow-ups along Schubert varieties \cite{BS25,Fa26,FW25,Fe26}. 

Clearly, the varieties $\Gr^r$  are acted upon by the $d$-th maximal parabolic subgroup of $GL_n$.
We introduce the complete truncated Grassmannian
$\Gr^{[d]}$ as the closure of the $\exp(\fa_d)$-orbit of the product of highest weight lines inside the 
product of all truncated Grassmannians $\Gr^r$, $1\le r\le d$. Our first goal is
to study its coordinate ring. 

Let $\bm=(m_1,\dots,m_d)$ be a collection of nonnegative integers. 
Recall that a cyclic tensor product $M_1\odot M_2$ of two cyclic modules of a Lie algebra $\fa$ 
is the $\U(\fa)$-span of the tensor product of cyclic vectors inside $M_1\T M_2$.
Our central algebraic
object of study is the cyclic tensor product $V_\bm$:
\[
V_\bm = (\underbrace{V^1\odot\dots\odot  V^1}_{m_1}) \odot \dots \odot 
(\underbrace{V^d\odot\dots\odot V^d}_{m_d}).
\]
The direct sum $\bigoplus_{\bm} V_\bm^*$ admits a natural structure of an associative 
commutative algebra generated by the dual spaces $(V^r)^*$, $1\le r\le d$.
If all $m_i$ but $m_1$ and $m_d$ vanish, then 
we recover the spaces studied in \cite{BSS25,Fe24}.

In order to state our first theorem, recall the weak dominance order on partitions: 
we say that $\la\le \mu$ if for all partial sums $\la_1+\dots +\la_i\le \mu_1+\dots +\mu_i$
(we do not require that $\la$ and $\mu$ have the same size). We  define the partition
$\Lambda_\bm$ to have $m_r$ columns of height $r$ for each $r$. The notation $\Sigma_\la$
stands for the classical Schur functor; hence, $\Sigma_\la V$ is the irreducible $\mgl(V)$
module corresponding to the partition $\la$.

\begin{theoremA}
The multihomogeneous coordinate ring of $\Gr^{[d]}$ is isomorphic to the direct sum of all
$V_\bm^*$. 
The space $V_\bm$  admits the following decomposition as an $\msl(V_d)\oplus \mgl(V_{n-d})$-module:
\[
V_\bm \simeq \bigoplus_{\substack{\la\le\Lambda_\bm\\ \ell(\la)\le d}} 
\Sigma_\la V_d^*\T \Sigma_\la V_{n-d}.
\]
\end{theoremA}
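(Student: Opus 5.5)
The proof splits into a geometric statement (the coordinate ring) and an algebraic one (the decomposition). The first is formal. Let $U=\exp(\fa_d)$; $\fa_d$ is abelian, so $U\simeq \fa_d$ is an affine space. The orbit map $u\mapsto (u[v^1],\dots,u[v^d])$ has closure $\Gr^{[d]}$. The degree-$\bm$ component of its multihomogeneous coordinate ring is the image of $\bigotimes_r \mathrm{Sym}^{m_r}(V^r)^*$ in the functions on $U$. The kernel of this restriction is the annihilator of $\U(\fa_d)\cdot(\bigotimes_r (v^r)^{\otimes m_r})$, because $\exp(x)v=\sum x^k v/k!$ and $U$ is Zariski dense in $\fa_d$. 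Hence the $\bm$-component is $V_\bm^*$, and multiplication of functions matches the natural algebra structure. This gives the first claim.

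For the decomposition, identify $\fa_d\simeq \Hom(V_d,V_{n-d})\simeq V_d^*\T V_{n-d}$, so that $\U(\fa_d)=S(V_d^*\T V_{n-d})$. By Cauchy,
\[
S(V_d^*\T V_{n-d})\simeq\bigoplus_{\ell(\la)\le d}\Sigma_\la V_d^*\T\Sigma_\la V_{n-d}
\]
as $\mgl(V_d)\oplus\mgl(V_{n-d})$-modules, multiplicity free. Since $V_\bm$ is a cyclic $\U(\fa_d)$-module generated by a vector spanning a character of the Levi (trivial for $\msl(V_d)$), we have $V_\bm\simeq S(\fa_d)/I_\bm$ equivariantly, where $I_\bm$ is a Levi-stable ideal. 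Multiplicity-freeness forces $I_\bm$ to be a sum of isotypic components. It therefore suffices to show that $\Sigma_\la V_d^*\T \Sigma_\la V_{n-d}$ survives iff $\la\le\La_\bm$.

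The structure of $I_\bm$ is known from the de Concini--Eisenbud--Procesi theory of determinantal ideals. The isotypic components are spanned by products of minors of shape $\la$ of the generic $(n-d)\times d$ matrix. For a single $V^r$, the annihilator is generated by the minors of size $>r$, i.e. the determinantal ideal $I_{r+1}$: the coefficient of a wedge monomial of degree $k$ in $\exp(X)v_1\wedge\dots\wedge v_d$ is a $k\times k$ minor of $X$. For the cyclic tensor product, the annihilator of the tensor product of cyclic vectors consists of the polynomials $f$ with $f(X)\cdot\bigotimes v=0$. Since the functions $X\mapsto \exp(X)\bigotimes_r (v^r)^{\otimes m_r}$ have coordinates equal to products of minors, $V_\bm^*$ is the span of products $\prod_r\prod_{j=1}^{m_r}\delta_{r,j}$, where $\delta_{r,j}$ is a minor of size $\le r$. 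Equivalently, $V_\bm^*$ is the degree-$\bm$ piece of the coordinate ring of $\Gr^{[d]}$, realised in $\bC[\fa_d]$. DEP give a characteristic-free straightening: the span of products of minors of sizes $s_1\ge s_2\ge\dots$ (shape $\sigma$) is the sum of $\Sigma_\la\T\Sigma_\la$ over $\la\ge\sigma$ in the usual order on shapes. Sizes here are arbitrary minors, including size $0$ (constants), and the relevant space is the sum over all shapes $\sigma$ with columns bounded row by row by $\La_\bm$. Translating via the partition order, the surviving $\la$ are exactly those dominated weakly by $\La_\bm$ (length $\le d$ being automatic).

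The main obstacle is this last matching step. One must check that the union over admissible shapes $\sigma$ of the DEP ideals $\{\la\ge\sigma\}$, intersected with degree considerations, yields exactly $\{\la\le\La_\bm\}$ in the weak dominance order of the statement, with column/row conventions handled correctly. I would first verify it via the DEP lemma that a product of minors of shape $\sigma$ has components only in $\la\trianglerighteq\sigma$ (conjugate conventions), together with the converse that the leading bitableau of shape $\la$ is itself such a product. A check of characters for $d=1$ and for $\bm$ concentrated at $m_1,m_d$ against \cite{BSS25,Fe24} would confirm the convention.
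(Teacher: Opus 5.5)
Your route is the one the paper takes in its \emph{alternative} proof at the end of Section~\ref{sec:var}. There, $V_\bm^*$ is identified (via Proposition~\ref{prop:coordring}, whose proof you give in a more explicit form) with the span in $\bC[\Hom(V_d,V_{n-d})]$ of products of $m_r$ minors of size at most $r$. The De Concini--Eisenbud--Procesi description of $C_{\bm'}$ (Proposition~\ref{prop:colhomr}) is then applied. The paper's primary proof (Theorem~\ref{thm:main}) works differently: it bounds $\dim V_\bm$ above by the monomial spanning set indexed by $S_\bm$, counted via RSK and Greene's theorem, and below by the nonvanishing of $\Delta_\la v_\bm$ (Proposition~\ref{prop:nonvanish}). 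That route needs only the DEP highest weight vectors and also yields the monomial basis.

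As written, your proposal has one false claim and leaves the decisive step open. The false claim is that the annihilator of $v^r$ in $\U(\fa_d)$ is the determinantal ideal $I_{r+1}$. For $r=d$ there are no $(d+1)$-minors of an $(n-d)\times d$ matrix, so $I_{d+1}=0$, yet $E_{i,j}^2v_{\om_d}=0$. You are conflating $S(\fa_d)$, which acts on $V_\bm$, with the function ring $\bC[\fa_d]=S(\fa_d^*)$. The minors of size $\le r$ live in the latter, as the coordinates of $\exp(X)v^r$, and there they span $(V^r)^*$. Your later argument only uses this second description, so just drop the sentence.

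The step you call the main obstacle is where the proof is actually incomplete, and your proposed fix does not work. The converse you suggest, that the leading bitableau of shape $\la$ is itself an admissible product, fails unless $\la\subseteq\La_\bm$. For example, take $\bm=(2,0,\dots,0)$ and $\la=(1,1)\le\La_\bm=(2)$. Then $\Delta_{(1,1)}$ is a $2\times 2$ minor, which is only a linear combination of products of two entries. Character checks in small cases cannot substitute for an argument.

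What closes the gap is the full DEP statement, in the form of Proposition~\ref{prop:colhomr}: the span of products of minors whose sizes are the column lengths of a partition $\mu$ equals $\bigoplus_{\la\trianglelefteq\mu,\,|\la|=|\mu|}\Sigma_\la V_d\T\Sigma_\la V_{n-d}^*$. By multiplicity-freeness, these spans can be compared directly inside the polynomial ring. Products of $m_r$ minors of size at most $r$ for each $r$ (size $0$ meaning the constant $1$) are exactly the products whose column shape satisfies $\mu\subseteq\La_\bm$. Hence $V_\bm^*$ contains precisely those $\la$ with $\la\trianglelefteq\mu$ for some $\mu\subseteq\La_\bm$ with $|\mu|=|\la|$.

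This set equals $\{\la\le\La_\bm\}$:
\begin{itemize}
\item One inclusion follows by comparing partial sums.
\item For the other, let $\mu$ consist of the first $|\la|$ boxes of $\La_\bm$ in row-reading order. Then $\mu\subseteq\La_\bm$, $|\mu|=|\la|$, and $\mu_1+\dots+\mu_i=\min(\La_1+\dots+\La_i,|\la|)\ge\la_1+\dots+\la_i$.
\end{itemize}
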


One may turn this decomposition into an isomorphism of $\mgl(V_d)\oplus \mgl(V_{n-d})$-modules by tensoring the left-hand side with a one-dimensional $\mgl(V_d)$-module. 
Furthermore, Theorem A is actually obtained in the greater generality of \textit{partial truncated Grassmannians} $\Gr^\br$ with $\br\subset[d]$, defined analogously. 

In order to study the representations $V_\bm$, we introduce our main combinatorial objects of study: a family
of convex polytopes. Let $P$ be a finite poset of width $w(P)=d$. 
For $\bm\in \bZ_{\ge 0}^d$, we introduce the convex polytope $X_\bm\subset \bR_{\ge 0}^P$ defined by the following 
inequalities labeled by all subposets $P'\subset P$:
\[
\sum_{\al\in P'} x_\al \le \sum_{i=1}^d \min(i,w(P'))m_i.
\]

We apply the above construction to the poset $P=P_d$ consisting of pairs $(i,j)$ with
$i\in [d+1,n]$, $j\in [1,d]$ and with $(i,j)\le (i_1,j_1)$ if and only if 
$i\le i_1$, $j\le j_1$. The elements of this poset label the matrix units $E_{i,j}\in \fa_d$.
Hence, any point $\bs\in S_\bm = X_\bm\cap \bZ^{P_d}$ naturally defines a vector in $V_\bm$, which
is equal to $E^\bs v_\bm = \prod_{(i,j)\in P_d} E_{i,j}^{s_{i,j}} v_\bm$.

\begin{theoremA}
The monomials $E^\bs v_\bm$, $\bs\in S_\bm$ form a basis of $V_\bm$. 
\end{theoremA}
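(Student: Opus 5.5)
The proof splits into two independent halves: the monomials span $V_\bm$, and there are no more of them than $\dim V_\bm$. For the count we use Theorem A. The decomposition there gives
\[
\dim V_\bm=\sum_{\la\le\Lambda_\bm,\ \ell(\la)\le d}\dim\Sigma_\la V_d\cdot\dim\Sigma_\la V_{n-d}.
\]
Via RSK this equals the number of $(n-d)\times d$ nonnegative integer matrices $\bs$ whose RSK shape $\la(\bs)$ satisfies $\la(\bs)\le\Lambda_\bm$. Greene's theorem says $\la_1+\dots+\la_k$ is the maximal total weight of $\bs$ supported on a union of $k$ chains of the poset $P_d$.

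We then show that this condition is exactly $\bs\in X_\bm$. First compute $(\Lambda_\bm)_1+\dots+(\Lambda_\bm)_k=\sum_i\min(i,k)m_i$. Now take any subposet $P'$ of width $w$. By Dilworth's theorem $P'$ is covered by $w$ chains, so $\sum_{P'}s\le\la_1+\dots+\la_w$. Conversely, a union of $k$ chains is a subposet of width at most $k$, and the right-hand side $\sum_i\min(i,w)m_i$ is monotone in $w$. Hence the family of inequalities defining $X_\bm$ is equivalent to the family $\la_1+\dots+\la_k\le(\Lambda_\bm)_1+\dots+(\Lambda_\bm)_k$ for $k\le d$. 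Since $\ell(\la)\le d$ holds automatically, this gives $|S_\bm|=\dim V_\bm$.

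For spanning we follow the standard FFL-type strategy. Fix a monomial order on PBW monomials of $\U(\fa_d)$, and show that every monomial $E^\bs$ with $\bs\notin X_\bm$ acts on $v_\bm$ as a combination of smaller monomials. It suffices to handle a violated inequality for $P'$. We write $v_\bm$ as a tensor product of cyclic vectors and recall that in $V^r$ each factor kills all wedge monomials of degree greater than $r$. Consider the operators $E_\al$ with $\al\in P'$, which commute since $\fa_d$ is abelian. On one copy of $V^i$ a product of them has degree at most $\min(i,w(P'))$: within an antichain-free collection one can lower degree via the action of $\fa_d$ on wedges, and more precisely a chain decomposition bounds it. So a product of more than $\sum_i\min(i,w(P'))m_i$ such operators kills $v_\bm$. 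To turn this vanishing into a straightening relation, apply the Lie algebra $\mgl(V_d)\oplus\mgl(V_{n-d})$ (more precisely its nilpotent parts, which preserve the annihilating ideal) to the vanishing relation $E_{P'}^{N}v_\bm=0$. The goal is to produce a relation in which $E^\bs$ is the leading term.

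The main obstacle is this spanning step, namely choosing a monomial order together with differential operators so that the leading term really is $E^\bs$ for an arbitrary subposet $P'$, not just for chains. A fallback avoids straightening altogether: prove linear independence instead. This could be done through a degeneration or by exhibiting a triangular pairing with $V_\bm^*$ built from products of minors (the generators of $(V^r)^*$ indexed by semistandard-like data), and then conclude from the equality of cardinalities above. I would try the essential-monomial argument via the lexicographic order on $P_d$ first.
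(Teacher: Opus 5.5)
Your overall architecture (spanning plus a dimension count) is the one used in Theorem~\ref{thm:main}, and your counting half is correct. It is exactly Corollary~\ref{cor:cardS}: RSK and Greene's theorem, together with Dilworth and the monotonicity of $w\mapsto\sum_i\min(i,w)m_i$, identify $S_\bm$ with pairs of tableaux of shape $\la\le\Lambda_\bm$.

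One caution about the count. In the paper, the decomposition in Theorem A is first \emph{deduced} from the basis statement (Theorem~\ref{thm:main}). You may therefore cite it only through the independent coordinate-ring/collineation proof at the end of Section~\ref{sec:var}. The paper's main route needs only $\dim V_\bm\ge|S_\bm|$, which it obtains from $\Delta_\la v_\bm\neq0$ (Proposition~\ref{prop:nonvanish}).

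\textbf{The gap.} The genuine gap is the spanning half, which you leave open, and the mechanism you sketch cannot work. You claim that a product of operators $E_\al$, $\al\in P'$, has degree at most $\min(i,w(P'))$ on a copy of $V^i$. This is false. We have $E_{i_1,j_1}\cdots E_{i_k,j_k}(v_1\wedge\dots\wedge v_d)\neq0$ as soon as the $i_t$ are pairwise distinct and the $j_t$ are pairwise distinct, whether or not the $(i_t,j_t)$ form an antichain. For example, take $d=2$, $n=4$, $\bm=(0,1)$ and the chain $P'=\{(3,1),(4,2)\}$. Then $N_1=1$, yet $E_{3,1}E_{4,2}v_\bm=v_3\wedge v_4\neq0$. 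So there is no vanishing relation ``$E_{P'}^Nv_\bm=0$'' to which Levi operators could be applied. The leading-term difficulty you flag is a symptom of starting from the wrong relation.

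\textbf{The missing idea.} Replace each chain by a single repeated root vector.
\begin{enumerate}
\item Decompose $P'$ into $w=w(P')$ chains $C_1,\dots,C_w$. Let $\beta_i$ be the root spanning $C_i$: in matrix units, the largest row index and smallest column index occurring in $C_i$, i.e.\ the join of $C_i$ in the root poset. Put $b_i=\sum_{\al\in C_i}s_\al$.
\item The correct vanishing statement is Lemma~\ref{lem:Nw}: $f_{\beta_1}^{b_1}\cdots f_{\beta_w}^{b_w}v_\bm=0$ whenever $\sum_ib_i>N_w=\sum_i\min(i,w)m_i$. The reason is that $f_\beta^2$ kills every $v^r_{\om_d}$, so each of the $w$ root vectors hits each tensor factor at most once. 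Meanwhile a factor from $V^r$ with $r<w$ absorbs at most $r$ operators.
\item Apply a suitable product $K$ of upper-triangular Levi root vectors. These kill $v_\bm$, and through the adjoint action they lower row indices and raise column indices. This gives $K\prod_if_{\beta_i}^{b_i}=f^\bs+(\text{smaller monomials})$.
\item Induction on the monomial order then yields Corollary~\ref{cor:span}, as in \cite[Proposition 2.18]{Fe24}.
\end{enumerate}
In the example above, $\mathrm{ad}E_{3,4}\,\mathrm{ad}E_{1,2}(E_{4,1}^2)=-2(E_{3,1}E_{4,2}+E_{4,1}E_{3,2})$. Combined with $E_{4,1}^2v_\bm=0$, this gives the straightening $E_{3,1}E_{4,2}v_\bm=-E_{4,1}E_{3,2}v_\bm$.

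Your fallback (direct linear independence via a degeneration or a triangular pairing) is only named, not executed. As written, the proposal establishes $|S_\bm|=\dim V_\bm$ but not that the monomials span.
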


The proofs of the two theorems above use the RSK correspondence \cite{Kn70} and Greene's 
theorem \cite{G74}. This technique also allows for a significantly more conceptual proof of the Minkowski sum property result in 
\cite{Fe24} (see also \cite{Fe25}) for the case $m_2=\dots=m_{d-1}=0$. We note that in general this property does
not hold. The RSK algorithm also leads to the following description of the polytopes $X_\bm$.

\begin{theoremA}
The polytope $X_\bm$ admits a piecewise linear bijection with a certain subpolytope of the 
Gelfand--Tsetlin polytope of a multiple of the fundamental weight $\om_d$. This subpolytope is defined
by explicit inequalities only involving entries in a single row of the GT pattern.
\end{theoremA}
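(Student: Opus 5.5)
Our plan is to realize the piecewise linear bijection through the RSK correspondence, using Greene's theorem to convert the subposet inequalities defining $X_\bm$ into inequalities on the shape of the RSK tableaux.

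\textbf{Step 1: pass to integer points.} A lattice point $\bs\in\bZ_{\ge0}^{P_d}$ is an $(n-d)\times d$ nonnegative integer matrix. Apply RSK to obtain a pair of semistandard tableaux $(T_1,T_2)$ of common shape $\la$ with $\ell(\la)\le d$. The entries of $T_1$ lie in $[d+1,n]$ and those of $T_2$ lie in $[1,d]$.

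\textbf{Step 2: apply Greene's theorem.} For the product order on $P_d$, a subposet of width $k$ is covered by $k$ chains. Hence the maximum of $\sum_{\al\in P'}x_\al$ over all $P'$ with $w(P')\le k$ equals $\la_1+\dots+\la_k$. The right-hand side $\sum_i \min(i,w(P'))m_i$ is nondecreasing in $w(P')$, so it suffices to test the inequalities on the maximizers. Therefore $\bs\in S_\bm$ if and only if
\[
\la_1+\dots+\la_k\le \sum_{i=1}^d \min(i,k)m_i = (\Lambda_\bm)_1+\dots+(\Lambda_\bm)_k
\quad\text{for all } k\le d,
\]
that is, $\la\le \Lambda_\bm$ in the weak dominance order. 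This is consistent with Theorem A.

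\textbf{Step 3: encode the pair as one GT pattern.} Set $N=|\Lambda_\bm|$-related constant $M=\sum_i i\,m_i$ in the first row, or more naturally the multiple $M\om_d$ with $M$ the first column length of the complement rectangle; we will fix $M$ as the largest admissible $\la_1$, namely $\sum_i m_i$. The pair $(T_1,T_2)$ of shape $\la$ can be glued into a single GT pattern for $M\om_d$ of $\mgl_n$:
\begin{itemize}
\item[] the pattern of $T_2$ occupies rows $1,\dots,d$, in the alphabet $[1,d]$;
\item[] the complement of the pattern of $T_1$ inside the $d\times M$ rectangle, rotated by $180^\circ$, fills the rows $d,\dots,n$ in the alphabet $[d+1,n]$.
\end{itemize}
This is the standard identification of $\bigoplus_\la \Sigma_\la V_d^*\T\Sigma_\la V_{n-d}$ with the restriction of $\Sigma_{M\om_d}V$, which is multiplicity-free in $\la$. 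The two halves are glued along the row of length $d$, whose entries are determined by $\la$; in the complemented convention these are $(M-\la_d,\dots,M-\la_1)$ up to order. Since $\ell(\la)\le d$ and $\la_1\le M$, the gluing is valid.

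\textbf{Step 4: extend to real points.} The constraint $\la\le\Lambda_\bm$ becomes a set of linear inequalities on the entries of this single $d$-th row, which is the assertion of the theorem. To extend from lattice points to the polytope, we use that RSK admits a piecewise linear (tropical) version on real matrices: Greene--Kleitman invariants are maxima of linear forms over chain families, given by Kirillov's geometric or tropical RSK. This gives a PL bijection $\bR_{\ge0}^{P_d}\to$ the cone of GT patterns with free top row. It sends dilates to dilates and lattice points to lattice points. Restricted to $X_\bm$, it lands exactly on the subpolytope by Step 2 applied in the real setting.

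\textbf{Main obstacle.} The hardest part is Step 4: proving that the tropical RSK map is a PL bijection of the real cones. One must also check that the real-valued Greene theorem still identifies the maximum of the $x$-sums over width-$k$ subposets with the partial sums of the shape row. We would rely on the known result that tropical RSK is a PL homeomorphism, namely Berenstein--Kirillov and Danilov--Koshevoy, rather than reprove it, and deduce the real Greene statement by continuity and homogeneity from the integer case.
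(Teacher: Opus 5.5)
Your route is the paper's: RSK plus Greene's theorem turns $\bs\in S_\bm$ into a pair of tableaux of common shape $\la\le\La_\bm$. Pairs with $\la_1\le M=\sum_i m_i$ are then identified with $d\times M$ rectangular tableaux, i.e.\ GT patterns of weight $(M^d0^{n-d})$, so that the dominance condition only involves row $d$. The paper does this via $\Psi^{-1}$ and then $\Phi$.

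Step 2 is fine. The gluing in Step 3, however, is misstated and, as written, does not give a bijection. Your $[d+1,n]$-half can be read correctly: complementing and rotating the pattern of $T_1$ amounts to rotating $T_1$ by $180^\circ$ inside the $d\times M$ rectangle and replacing each entry $i$ by $n+d+1-i$. Its inner boundary, i.e.\ row $d$, is then $\mu=(M-\la_d,\dots,M-\la_1)$.

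The $[d]$-half must therefore have top row $\mu$, so the pattern of $T_2$ cannot be placed unchanged in rows $1,\dots,d$; that would put $\la$ in row $d$. Nor can you keep $\la$ in row $d$ and adjust the other half instead. Skew tableaux of shape $(M^d)/\la$ on $[d+1,n]$ are equinumerous with tableaux of shape $\mu$ on that alphabet, not of shape $\la$. For $d=1$ and $\la=(k)$ the two counts are $\binom{M-k+n-2}{n-2}$ versus $\binom{k+n-2}{n-2}$. This is just the Cauchy formula: the $\mgl(V_d)$-factor is $\Sigma_{(M^d)}V_d\T\Sigma_\la V_d^*\simeq\Sigma_\mu V_d$.

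The fix is the paper's column complement. Replace $T_2$ by the tableau whose $k$-th column is $[d]$ minus the $(M+1-k)$-th column of $T_2$; on patterns this is $\kappa^s_i\mapsto M-\kappa^s_{s+1-i}$ for $s\le d$. Then row $d$ is exactly $\mu$ (no ``up to order''). You should also write out the resulting explicit inequalities, since the statement asks for them:
\[
\kappa^d_d+\dots+\kappa^d_{d-a}\ge am_1+(a-1)m_2+\dots+m_a,\qquad a\in[d-1].
\]
Your opening of Step 3 also briefly proposes $M=\sum_i i\,m_i=|\La_\bm|$, which would be wrong; $M=\sum_i m_i$ is the right choice.

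Step 4 goes beyond what the paper actually proves. The paper establishes only the bijection of integer-point sets $S_\bm\to GT_\bm$. It writes explicit piecewise linear formulas only for $n=4$, $d=2$ and asserts that the general case is similar. Tropical RSK is a sound way to supply the general case:
\begin{enumerate}
\item it is a PL bijection extending RSK;
\item the real version of the Greene statement follows from the integer one by positive homogeneity and continuity, as you say;
\item both halves of the corrected gluing are affine in GT coordinates. On the $[d+1,n]$-side the map is $\kappa^s_i=M-\bar\kappa^{n+d-s}_{d+1-i}$, with $\bar\kappa$ the pattern of $T_1$.
\end{enumerate}
Hence the composite is a PL bijection from $X_{(0,\dots,0,M)}$ onto the GT polytope of $M\om_d$, carrying $X_\bm$ onto the subpolytope. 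State the affineness explicitly, since it is what makes the composite PL.
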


Our last result is a description of higher truncated fundamental modules in terms of
the spaces $V_\bm$ and a description of higher truncated Grassmannians. For an opposite 
Demazure module $D_{\sigma\la}\subset V_\la$ in an irreducible highest 
weight module $V_\la$, one defines the truncated space $V^\sigma_\la$ as 
$V_\la/D_{\sigma\la}$. The corresponding truncated flag variety $\Fl^\sigma_\la$ is the closure of the 
opposite Borel-group orbit through the highest weight line. 
In particular, $V^r_{\omega_d}$ is isomorphic to $V^{\sigma_{r+1}}_{\om_d}$ for an
appropriate choice of the Weyl group element $\sigma_{r+1}$. 

\begin{theoremA}\label{thm:truncrep}
For any $m\ge 2$, the truncated representation  $V^{\sigma_{r+1}}_{m\om_d}$ is isomorphic to $V_\bm$,
where $m_r=1$, $m_d=m-1$, and $m_i=0$ otherwise. 
The truncated flag varieties $\Fl^{\sigma_{r+1}}_{m\om_d}$, $m\ge 2$, are all isomorphic to the 
graph closure of the birational isomorphism $\Gr^r\dashrightarrow \Gr_d(V)$.
\end{theoremA}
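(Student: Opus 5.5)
Both $V^{\sigma_{r+1}}_{m\om_d}$ and $V_\bm$, with $m_r=1$, $m_d=m-1$, are cyclic $\U(\fa_d)$-modules generated by a highest weight vector of weight $m\om_d$. So it suffices to show that their annihilators in $\U(\fa_d)=S(\fa_d)$ coincide. My plan is to construct a surjection $V_\bm\to V^{\sigma_{r+1}}_{m\om_d}$ of cyclic modules and then compare dimensions, using Theorem A for $V_\bm$ and a Demazure character computation for the truncated module.

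\emph{Step 1: the surjection.} Realize $V_{m\om_d}$ as the Cartan component $V_{\om_d}\odot V_{\om_d}^{\odot(m-1)}\subset V_{\om_d}^{\T m}$. By definition $V^{\sigma_{r+1}}_{m\om_d}=V_{m\om_d}/D_{\sigma_{r+1}m\om_d}$. The extremal vector of weight $\sigma_{r+1}m\om_d$ is $u^{\T m}$, where $u$ is the extremal wedge of degree $r+1$ generating $D_{\sigma_{r+1}\om_d}$ (the span of wedge monomials of degree $>r$). Hence $D_{\sigma_{r+1}m\om_d}$ is contained in $V_{m\om_d}\cap (D_{\sigma_{r+1}\om_d}\T V_{\om_d}^{\T(m-1)})$. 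Projecting the first tensor factor to $V^r$ therefore induces a surjection
\[
V^{\sigma_{r+1}}_{m\om_d}\twoheadrightarrow V^r\odot V_{\om_d}^{\odot(m-1)}=V_\bm .
\]
This has the opposite direction from the one I first wanted, but it is equally useful: it shows that $\dim V_\bm\le \dim V^{\sigma_{r+1}}_{m\om_d}$.

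\emph{Step 2: dimension equality.} Theorem A gives $\dim V_\bm=\sum_{\la\le\La_\bm,\ \ell(\la)\le d}\dim\Sigma_\la V_d\cdot\dim\Sigma_\la V_{n-d}$, where $\La_\bm$ has one column of height $r$ and $m-1$ columns of height $d$. On the other side, $V_{m\om_d}$ decomposes under $\mgl(V_d)\oplus\mgl(V_{n-d})$ by the skew Cauchy formula as $\bigoplus_{\la\subset (m^d)}\Sigma_\la V_d^*\T\Sigma_\la V_{n-d}$ (up to a twist), graded by PBW degree $|\la|$. The opposite Demazure submodule $D_{\sigma_{r+1}m\om_d}$ is $\fp_d^-$-stable, hence a sum of such isotypic pieces. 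I would identify it as the sum over those $\la\subset(m^d)$ with $\la_1+\dots+\la_{d-r}$ large, more precisely $\la\not\le\La_\bm$. To do this I would use the standard monomial or Lakshmibai--Seshadri description: a standard Young tableau basis vector lies in the opposite Demazure module iff its first column is $\ge\sigma_{r+1}$ in Bruhat order. I would then translate this via RSK (as in the proof of Theorem B) into the condition that $\la$ violates a partial-sum inequality. The weak dominance condition $\la\le\La_\bm$ reads $\la_1+\dots+\la_i\le (m-1)i+\min(i,r)$ for all $i$, and only $i>r$ is restrictive, i.e.\ the condition is $\la_{r+1}+\dots\le\ldots$.

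\emph{Main obstacle.} The main obstacle is Step 2: showing that the opposite Demazure module is exactly the complement of the $\la\le\La_\bm$ components. I would first try the cheap route. The Theorem B basis $S_\bm$ shows that $V_\bm$ has the predicted dimension, and a direct count of standard tableaux of shape $(m^d)$ (columns viewed as $d$-subsets) avoiding $D_{\sigma_{r+1}}$ then gives the needed inequality. The avoiding condition is that no column, equivalently the first column, has more than $r$ entries exceeding $d$, while the remaining $m-1$ columns are arbitrary. Via RSK and Greene's theorem this count matches $|S_\bm|$. The inequality then forces the surjection of Step 1 to be an isomorphism.

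\emph{Step 3: the geometric statement.} The isomorphism holds for every $m\ge2$, so the homogeneous coordinate ring of $\Fl^{\sigma_{r+1}}_{m\om_d}$, namely $\bigoplus_k (V^{\sigma_{r+1}}_{km\om_d})^*$, matches the Veronese-type subring $\bigoplus_k V^*_{(0,\dots,k,\dots,k(m-1))}$ of the coordinate ring of $\Gr^{\{r,d\}}$ from Theorem A (partial version). The subring is taken along a ray of ample bidegrees, so $\operatorname{Proj}$ gives $\Gr^{\{r,d\}}$. This is the closure of the $\exp(\fa_d)$-orbit in $\Gr^r\times\Gr_d(V)$, i.e.\ the graph closure of $\Gr^r\dashrightarrow\Gr_d(V)$, and it is independent of $m$.
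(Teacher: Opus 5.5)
\paragraph{Steps 1--2: correct, by a different route.}
Your Steps 1 and 2 are correct and prove the module isomorphism differently from the paper.

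The paper's argument works for arbitrary $G$, $\la$, $\sigma$. By Ramanathan's theorem (Subsection~\ref{subsec:general}), the ideal of the opposite Schubert variety in the coordinate ring $A_\la$ of $G/P$ is generated by its degree-one part $\mathrm{Ann}(D_{\sigma\la})$. Hence $I_{\sigma\la}(m-1)\T A_\la(1)\to I_{\sigma\la}(m)$ is onto. Dualizing says exactly that the natural surjection $V^\sigma_{m\la}\to V^\sigma_{(m-1)\la}\odot V_\la$ (the one-step version of your Step~1 map) is injective. Iterating gives $V^{\sigma_{r+1}}_{m\om_d}\simeq V^r_{\om_d}\odot V_{(m-1)\om_d}$.

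You instead obtain injectivity from a dimension count. This uses standard monomial theory for opposite Schubert varieties in $\Gr_d(V)$ together with $\dim V_\bm=|S_\bm|$. That is valid, but it is specific to this case. It also needs the full Theorem~\ref{thm:main}, in particular the lower bound from Proposition~\ref{prop:nonvanish}, whereas the paper's argument is independent of Section~\ref{sec:rep}. What your count buys is an explicit match between $S_\bm$ and the standard monomials outside $D^*$, which the paper only records in the Remark following~\eqref{eq:Vrm}.

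Two details need fixing:
\begin{itemize}
\item The tableau condition concerns the first column only: $T_{d-r,1}\le d$. Entries weakly increase along rows, so ``no column has more than $r$ entries exceeding $d$'' is really a condition on the last column. It is strictly stronger and would undercount.
\item The count is matched with $|S_\bm|$ not by RSK but by the split-and-complement bijection $\Psi$ of Subsection~\ref{sec:GT}. Let $\mu$ be the shape of the part of $T$ with entries in $[d]$, and $\nu=(m-\mu_d,\dots,m-\mu_1)$. Then $T_{d-r,1}\le d\iff \nu_{r+1}\le m-1\iff \nu\le\La_\bm$. RSK and Greene's theorem enter only through Lemma~\ref{lem:RSK-gen}.
\end{itemize}

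\paragraph{Step 3: a false identification.}
Let $W=\U(\fb_-)w$ with $w$ a weight vector. The degree-$k$ part of the homogeneous coordinate ring of $\ov{B_-[w]}\subset\bP(W)$ is $(W^{\odot k})^*$. For $\Fl^{\sigma_{r+1}}_{m\om_d}$ this is $((V^{\sigma_{r+1}}_{m\om_d})^{\odot k})^*$, not $(V^{\sigma_{r+1}}_{km\om_d})^*$.

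These differ for $k\ge 2$. By your own first part applied to $km$, $V^{\sigma_{r+1}}_{km\om_d}\simeq V_{\bm''}$ with $m''_r=1$ and $m''_d=km-1$. Now $\La_{\bm''}=((km)^r,(km-1)^{d-r})$ strictly dominates $\La_{k\bm}=((km)^r,(km-k)^{d-r})$, so Theorem~\ref{thm:main} gives different dimensions. The smallest case shows it directly: for $\msl_2$, $\la=\omega$, $\sigma=s_1$, the variety $F^{\sigma}_{m\la}$ is a rational normal curve of degree $m-1$. Its degree-$k$ coordinates span a space of dimension $k(m-1)+1$, while $\dim V^\sigma_{km\la}=km$. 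So your sentence equating $\bigoplus_k(V^{\sigma_{r+1}}_{km\om_d})^*$ with $\bigoplus_k V^*_{k\bm}$ is false as written.

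The repair is short:
\begin{itemize}
\item The isomorphism of cyclic modules from your first part gives $(V^{\sigma_{r+1}}_{m\om_d})^{\odot k}\simeq V_\bm^{\odot k}\simeq V_{k\bm}$. So the ring really is $\bigoplus_k V^*_{k\bm}$.
\item This ring is generated in degree one. By Proposition~\ref{prop:coordring} it is the Segre--Veronese subring of the coordinate ring of $\Gr^{(r,d)}$ along $\bm$.
\item Since $m\ge 2$, $\cO(1,m-1)$ is very ample on $\bP(V^r_{\om_d})\times\bP(V_{\om_d})$. Hence the Proj of this subring is $\Gr^{(r,d)}$.
\end{itemize}

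The paper's proof (Corollary~\ref{cor:higher}) avoids coordinate rings entirely. The inclusion $V^{\sigma_{r+1}}_{m\om_d}\simeq V^r_{\om_d}\odot V_{(m-1)\om_d}\subset V^r_{\om_d}\T V_{(m-1)\om_d}$ places the orbit closure in the Segre image of $\bP(V^r_{\om_d})\times\bP(V_{(m-1)\om_d})$. Then $\ov{B_-[v_{(m-1)\om_d}]}\simeq\Gr_d(V)$ via the Veronese map.
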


This theorem provides a description of the generalized
graph closures of Borovik--Sturmfels--Sverisdóttir \cite{BSS25}. Namely, they considered the case
$r=1$ and the birational isomorphism between $\Gr^1\simeq \bP^{d(n-d)}$ and 
$\Gr_d(V)$. We also note that the second statement of the theorem is a special case 
of a general statement which holds for arbitrary truncated flag varieties.

Our paper is organized as follows. In Section \ref{sec:prelim} we collect the preliminary material on Lie algebras, representations, and flag varieties. 
In Section \ref{sec:BSS} we describe a special case of our approach and recover
certain results of \cite{Fe24}. Section \ref{sec:polytopes} is devoted to the study of
the polytopes $X_\bm$. In Section \ref{sec:rep} we introduce modules $V_\bm$ and construct
their monomial bases. Section \ref{sec:var} is devoted to the study of truncated 
Grassmannians and flag varieties.

\section*{Acknowledgments}
EF was partially supported by the ISF grant 493/24. IM was partially supported by the SFB-TRR 195 ``Symbolic Tools in Mathematics and their Application''.

\subsection*{AI use disclosure} The technique of applying the RSK correspondence and Greene's theorem to integer points of FFLV polytopes and their generalizations was suggested by ChatGPT 5.6 Sol. This technique originally appeared within a purported proof of a statement that was eventually deemed false. However, it was then successfully applied (by the authors) in a number of other arguments that appear in this paper. AI was also used for copyediting purposes.

\section{Preliminaries}\label{sec:prelim}
We work over the field of 
complex numbers. For a vector $w$ in a vector space $W$, we denote by $[w]\in \bP(W)$ the line
spanned by $w$.
We fix two positive integers $d,n$ such that $d\le n-d$. 
For two  integers $k_1,k_2$, we denote by $[k_1,k_2]$ the set of 
integers $\{k_1,\dots,k_2\}$. We also use the standard notation $[k]=[1,k]$.
For a partition $\la$, we denote by $\ell(\la)$ the number of its nonzero parts 
and by $|\la|$ the sum of all parts.

\subsection{Lie algebras}
Let $\fg=\msl_n$ and let $\fn_-\subset \fg$ be the subalgebra of strictly lower triangular
matrices. 
We denote by $\fb=\fn\oplus\fh$ and $\fb_-=\fn_-\oplus\fh$ the upper and lower Borel
subalgebras.
Let $\Phi=\Phi_+\sqcup \Phi_-$ be the set of roots written as a disjoint
union of positive and negative roots. If $\al_1,\dots,\al_{n-1}$ are simple roots,
then the elements of $\Phi_+$ are given by $\al_{i,j}=\al_i+\dots +\al_j$, 
$1\le i\le j<n$. The corresponding Chevalley generators are denoted by 
$e_{\al_{i,j}}=e_{i,j}\in\fn$ and $f_{\al_{i,j}}=f_{i,j}\in \fn_-$. 

Let $V$ be the standard (vector) representation of $\msl_n$. 
We fix a basis 
$v_1,\dots,v_n$ of the space $V$ and consider the decomposition $V= V_d\oplus V_{n-d}$,
\[
V_d = \spa\{v_1,\dots,v_d\},\ V_{n-d}=\spa\{v_{d+1},\dots,v_n\} 
\]
(recall that we always assume $1\le d\le n-d$).

Let $\fa_d\subset\fn_-$ be the
abelian radical spanned by the matrix units $E_{i,j}$ with $1\le j\le d<i\le n$.
In other words, $\fa_d$ is spanned by Chevalley generators $f_{i,j}$ with 
$i\le d\le j$; one has a natural identification
\[
\fa_d\simeq V_d^* \T V_{n-d} \simeq \Hom(V_d,V_{n-d}).
\]
We denote by $\fp_d\subset\fg$ the complementary parabolic subalgebra, i.e., $\msl_n = \fa_d\oplus\fp_d$ (as vector spaces). 
The opposite parabolic subalgebra containing 
$\fa_d$ is denoted by $\fp^-_{d}$. Clearly, the intersection 
$\fp_d\cap \fp^-_{d}$ is equal to the Levi subalgebra $\msl_d\oplus\msl_{n-d}$. 

Sometimes it will be more convenient to work with $\mgl_n$ (as opposed to
$\msl_n$). Slightly abusing notation, we denote by $\fp^-_d$ the corresponding
parabolic subalgebra of $\mgl_n$ as well. In particular, $\fp^-_d$ contains 
the direct sum $\mgl_d\oplus \mgl_{n-d}=\mgl(V_d)\oplus\mgl(V_{n-d})$.

\subsection{Highest weight representations}
Let $\om_1,\dots,\om_{n-1}$ be the fundamental weights of $\msl_n$. Let $P_+$ be 
the set of dominant integer weights $\la$, i.e., $\la=\sum_{i=1}^{n-1} m_i\om_i$,
$m_i\in\bZ_{\ge 0}$. For $\la\in P_+$, we denote by $V_\la$ the corresponding 
irreducible finite-dimensional $\msl_n$-module with highest weight vector $v_\la$.
For any two weights $\la,\mu\in P_+$, the tensor product $V_\la\T V_\mu$ contains
a submodule isomorphic to $V_{\la+\mu}$ as the Cartan component generated by the tensor product
of highest weight vectors $v_\la\T v_\mu$ (as an $\msl_n$-module or as an $\fn_-$-module).

We also work with modules of general linear algebras. Since we consider multiple such algebras, we denote an irreducible representation of $\mgl(V)$ corresponding to
a partition $\la=(\la_1\ge \dots \ge \la_n)$ by $\Sigma_\la V$. 
Clearly, the restriction of $\Sigma_\la V$ to $\msl(V)$ is equal to 
$V_{\sum_{i=1}^{n-1} (\la_i-\la_{i+1})\om_i}$.
Similarly, we 
use the notation $\Sigma_\mu V_d$ and $\Sigma_\nu V_{n-d}$ (for $\mu$ having 
at most $d$ parts and $\nu$ having at most $n-d$ parts). 

Since our central geometric objects are Grassmannians, irreducible representations
whose highest weight is a multiple of a fundamental weight will play a distinguished 
role in our paper. For $m\in\bZ_{>0}$, consider  $V_{m\om_d}$ and let 
$v_{m\om_d}\in V_{m\om_d}$ be the highest weight vector.
For $m=1$, the module $V_{\om_d}$
is realized as the wedge power $\bigwedge^d V$. We denote by $v_I$, 
$I=(i_1,\dots,i_d)$, $1\le i_1<\dots<i_d\le n$, the wedge products 
$v_{i_1}\wedge\dots\wedge v_{i_d}$. Clearly, the vectors $v_I$ form a basis of $V_{\om_d}$. 
The highest weight vector of $V_{\om_d}$ is $v_{\om_d}=v_1\wedge \dots \wedge v_d$.

For any $m>0$, the representation $V_{m\om_d}$ admits a $\fg$-equivariant embedding into 
the $m$-th tensor power of $V_{\om_d}$:
\[
V_{m\om_d}\subset V_{\om_d}^{\T m},\ v_{m\om_d} \mapsto v_{\om_d}^{\T m}. 
\]
One easily sees that $V_{m\om_d}=\U(\fa_d)v_{m\om_d}$. Since $\fa_d$ is abelian,
the universal enveloping algebra coincides with the symmetric algebra $S(\fa_d)$ and hence
$V_{m\om_d}$ inherits the degree grading from the symmetric algebra:
$V_{m\om_d}=\bigoplus_{s\ge 0} V_{m\om_d}(s)$. The space $V_{m\om_d}(0)$ is spanned by
the highest weight vector $v_{m\om_d}$ and the action of an element $E_{i,j}\in\fa_d$
increases the degree by one.

Let $\la=(m,\dots,m,0,\dots,0)=(m^d0^{n-d})$ be the $\gl(V)$ weight corresponding to $m\om_d$ 
($m$ appears $d$ times). The restriction of $\Sigma_\la V$ to $\mgl(V_d)\oplus \mgl(V_{n-d})$
is given by the following Cauchy-type formula 
\begin{equation}\label{eq:Cauchy}
\Sigma_{(m^d0^{n-d})}(V) \simeq \Sigma_{(m^d)} V_d \T \sum_{\substack{\mu=(\mu_1,\dots,\mu_d)\\ \mu_1\le m}} 
\Sigma_\mu V_d^*\T \Sigma_\mu V_{n-d},
\end{equation}
see \cite[Proposition 2.3.1]{W03} (unlike this reference, our right-hand side dualizes one of the spaces and, therefore, uses the same Schur functor in both factors and also has a one-dimensional prefactor).

\subsection{Opposite Demazure modules}
Let $W=S_n$ be the type $A$ Weyl group. For $\sigma\in W$, let $v_{\sigma\la}\in V_\la$ be an extremal
weight vector of weight $\sigma\la$ (this weight space is known to be one-dimensional). 
The (opposite) Demazure module $D_{\sigma\la}\subset V_\la$ is the 
cyclic $\fb_-$-module generated by $v_{\sigma\la}$. Since $v_{\sigma\la}$ is a weight vector, 
the module $D_{\sigma\la}$ is cyclic as an $\fn_-$-module as well. We note that for two
weights $\la,\mu\in P_+$, one has the embedding of $\fb_-$-modules 
$D_{\sigma(\la+\mu)}\subset D_{\sigma\la}\T D_{\sigma\mu}$, where the cyclic vector
$v_{\sigma(\la+\mu)}$ maps to the tensor product $v_{\sigma(\la)}\T v_{\sigma(\mu)}$.
This embedding of Demazure modules is compatible with the Cartan embedding 
$V_{\la+\mu}\subset V_\la\T V_\mu$.

\begin{rem}
The standard Demazure modules are defined as cyclic $\fb$-modules (as opposed
to $\fb_-$-modules) generated 
by extremal weight vectors. 	
We use opposite Demazure modules since we would like them to be cyclic with respect
to the algebra of generating operators $\fb_-$ (i.e., the same algebra as for $V_\la$).
\end{rem}

In this paper we consider special Demazure modules in $V_{m\om_d}$, which are invariant
with respect to the Levi subalgebra $\mgl(V_d)\oplus \mgl(V_{n-d})$.
One easily sees that a Demazure module $D_{\sigma\la}$, $\la=m\om_d$, is determined by
the set $\sigma(\{1,\dots,d\})$. The $\mgl(V_d)\oplus \mgl(V_{n-d})$-invariant 
Demazure modules are parametrized by $r=0,\dots,d$ and correspond
to $\sigma\in S_n$ such that $\sigma(\{1,\dots,d\})=I^r$, where
\[
I^r = (1,\dots,d-r,d+1,\dots,d+r).
\]   
For later use, we fix a representative $\sigma_r\in W$ such that 
$\sigma_r(\{1,\dots,d\})=I^r$ and denote the Demazure module $D_{\sigma\la}$,
$\sigma=\sigma_r$, $\la=m\omega_d$ by 
$D^r_m$. In particular, $D^0_m=V_{m\om_d}$; we also formally set $D^{d+1}_m = 0$.
 
For $r=0,\dots,d$, let us define the truncated representation
\[
V^r_{m\om_d} = V_{m\om_d}/D_m^{r+1}.
\]
By definition, $V^r_{m\om_d}$ is a cyclic $\fb_-$-module generated by a cyclic vector that is 
(the class of) the highest weight vector $v_{m\om_d}$.
To stress the dependence on $r$, we denote the cyclic vector of $V_{m\om_d}^r$
by $v_{m\om_d}^r$. 
 
\begin{example}\label{ex:extreme}
The extreme cases of the truncated representations $V^r_{m\om_d}$ are $r=0$ with
$V^0_{m\om_d}=\spa\{v_{m\om_d}^0\}$ and $r=d$ with
$V^d_{m\om_d}=V_{m\om_d}$.
\end{example} 

For $I\in \binom{[n]}{d}$, we define the degree of $I$ as the cardinality of the set
$I_{>d}=I\cap [d+1,n]$. We also say that the degree of $v_I\in \bigwedge^d V$ is equal
to the degree of $I$.

\begin{rem}
Let $m=1$. Then $V_{\om_d}=\bigwedge^d V$ and $D_1^{r+1}$ is the span of vectors $v_I$ 
of degree at least $r+1$ (i.e., $|I_{>d}|> r$). In turn, the space  $V^r_{\om_d}$ 
is the span of (the classes of) vectors $v_I$  of degree at most $r$, 
i.e., $|I_{>d}|\le r$.	
\end{rem}

One has the following simple lemma.
\begin{lem}\label{lem:thealgebra}
The truncated representations $V^r_{m\om_d}$ are cyclic $\fa_d$-modules.
The action of $\fa_d$ extends to an action of the parabolic subalgebra 
$\fp_d^-$.    	
\end{lem}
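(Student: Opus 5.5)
The plan is to prove the second claim first and then obtain cyclicity as a consequence. The two claims are that $D^{r+1}_m$ is $\fp^-_d$-stable and that the image of $v_{m\om_d}$ generates the quotient over $\fa_d$.

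\emph{Invariance of $D^{r+1}_m$.} By definition $D^{r+1}_m=\U(\fb_-)v_{\sigma_{r+1}\la}$ with $\la=m\om_d$. We have the decomposition
\[
\fp^-_d=\fb_-+\fp^-_d\cap\fn,
\]
and $\fp^-_d\cap\fn$ is spanned by the root vectors of the Levi factor, i.e.\ by $e_{i,j}$ lying in $\mgl(V_d)\oplus\mgl(V_{n-d})$. So it suffices to show that these Levi raising operators preserve $D^{r+1}_m$. The general fact to use is that a Demazure module $\U(\fb_-)v_{\sigma\la}$ is stable under $e_\al$ whenever the extremal weight $\sigma\la$ satisfies $\langle\sigma\la,\al^\vee\rangle\le 0$ for the simple root $\al$. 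In that case $D_{\sigma\la}=D_{s_\al\sigma\la}$ is stable under the $\msl_2$-triple of $\al$.

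For our weight this is concrete. The weight $\sigma_{r+1}\la$ is $m$ times the indicator of $I^{r+1}=\{1,\dots,d-r-1,d+1,\dots,d+r+1\}$. This weight is weakly decreasing on $[1,d]$ and on $[d+1,n]$. Hence for every Levi simple root $\al_i$ with $i\neq d$, the pairing $\langle\sigma_{r+1}\la,\al_i^\vee\rangle$ is $\ge 0$, i.e.\ the vector $v_{\sigma_{r+1}\la}$ is annihilated by the $f_{\al_i}$ up to the opposite convention. I will have to be careful with the sign convention for opposite Demazure modules. The clean statement I will actually prove is that $\U(\fb_-)v_{\sigma\la}=\U(\fp^-_d)v_{\sigma\la}$, which is equivalent to showing $e_\al v_{\sigma\la}\in\U(\fb_-)v_{\sigma\la}$ for each simple Levi root $\al$.

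\emph{Checking the simple Levi roots.} There are two cases for $e_{\al_i}v_{\sigma\la}$:
\begin{enumerate}
\item[(a)] It vanishes, if $\sigma\la+\al_i$ is not a weight. This happens when the pairing is $\ge0$ in the appropriate sense: $\sigma\la+\al_i$ lies outside the convex hull, since weights of $V_\la$ are $\le\la$ in the dominance order on each extremal chamber.
\item[(b)] It is a multiple of $f_{\al_i}^{k-1}\cdots$ applied to $v_{s_i\sigma\la}$, if the pairing is negative. Then $v_{\sigma\la}=f_{\al_i}^k v_{s_i\sigma\la}$ with $k=-\langle\sigma\la,\al_i^\vee\rangle$. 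This is the case to avoid.
\end{enumerate}
Since $I^{r+1}$ consists of an initial segment of $[1,d]$ and an initial segment of $[d+1,n]$, one checks directly that case (a) always occurs for $i\ne d$. For example, in $\bigwedge^d V$, the operator $E_{i,i+1}$ with $i,i+1$ on the same side of $d$ kills $v_{I^{r+1}}$. For general $m$, the extremal vector is $v_{I^{r+1}}^{\T m}$ in $V_{\om_d}^{\T m}$, and the same vanishing holds.

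Once this is done, $D^{r+1}_m$ is a $\fp^-_d$-submodule, because $\fp^-_d$ is generated by $\fb_-$ and these Levi $e_{\al_i}$. Since $V_{m\om_d}$ is a $\fg$-module, the quotient $V^r_{m\om_d}$ inherits a $\fp^-_d$-action.

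\emph{Cyclicity over $\fa_d$.} We have $\fp^-_d=\fa_d\oplus(\mgl(V_d)\oplus\mgl(V_{n-d}))$ as vector spaces, with $\fa_d$ an ideal. By PBW, $\U(\fp^-_d)=\U(\fa_d)\U(\mathfrak l)$, where $\mathfrak l$ is the Levi factor. The highest weight vector $v_{m\om_d}$ spans a one-dimensional $\mathfrak l$-module: it is killed by the Levi $e$'s, by the Levi $f$'s (since $m\om_d$ is orthogonal to all Levi coroots), and it is an $\fh$-eigenvector. Therefore
\[
V_{m\om_d}=\U(\fb_-)v_{m\om_d}\subset\U(\fp^-_d)v_{m\om_d}=\U(\fa_d)v_{m\om_d},
\]
which matches the statement already noted that $V_{m\om_d}=\U(\fa_d)v_{m\om_d}$. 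Passing to the quotient by the $\fp^-_d$-stable (and in particular $\fa_d$-stable) subspace $D^{r+1}_m$ gives cyclicity of $V^r_{m\om_d}$ over $\fa_d$ with cyclic vector $v^r_{m\om_d}$.

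The main obstacle is the invariance step: pinning down the sign conventions for opposite Demazure modules so that the vanishing in case (a) is verified for the right extremal vector.
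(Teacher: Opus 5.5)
Your proposal is correct and follows the paper's proof. Cyclicity is inherited from $V_{m\om_d}=\U(\fa_d)v_{m\om_d}$, and the $\fp^-_d$-action comes from the invariance of $D^{r+1}_m$ under $\mgl(V_d)\oplus\mgl(V_{n-d})$. The paper only asserts this invariance; you justify it by checking $E_{i,i+1}v_{I^{r+1}}^{\T m}=0$ for $i\neq d$. To get from that vanishing to $e_{\al_i}$-stability of $\U(\fb_-)v_{\sigma\la}$, you need the PBW identity $\U(\fb_-\oplus\bC e_{\al_i})=\U(\fb_-)\bC[e_{\al_i}]$, which holds because $\al_i$ is simple; state it explicitly.

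Do discard your stated ``general fact''. For $\U(\fb_-)v_{\sigma\la}$, the correct criterion for $e_\al$-stability is $\langle\sigma\la,\al^\vee\rangle\ge 0$, not $\le 0$; equivalently $e_\al v_{\sigma\la}=0$, which is what your case (a) actually verifies. The equality $D_{\sigma\la}=D_{s_\al\sigma\la}$ also fails in general: for $\msl_2$ one has $D_{\om}=V_{\om}\neq D_{-\om}=\bC v_{-\om}$.
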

\begin{proof}
The first claim is implied by the cyclicity of $V_{m\om_d}$ with respect to 
$\fa_d$. The second claim follows from the invariance of the Demazure modules $D_m^{r+1}$ 
under the action of $\mgl(V_d)\oplus \mgl(V_{n-d})$.   
\end{proof}

\subsection{Truncated Grassmannians}
Let $G=GL_n=GL(V)$ and let $N\subset B$ and $N_-\subset B_-$ be the upper and lower triangular
unipotent and Borel subgroups. The Lie group $\exp(\fa_d)$ is an abelian unipotent
subgroup of $N_-$, $\exp(\fa_d)\simeq \bG_a^{d(n-d)}$, where $\bG_a$ is the 
additive group of the base field. We denote by $P^-_d\supset \exp(\fa_d)$ the maximal parabolic 
subgroup corresponding to $\omega_d$, whose Lie algebra is $\fp^-_d\supset \fa_d$.
In particular, $P^-_d$ contains the group $GL_d\times GL_{n-d}=GL(V_d)\times GL(V_{n-d})$.
 
The truncated Grassmannian $\Gr^r_d(V)$ is defined as the closure of the orbit
of the cyclic line  in the projectivization of $V^r_{\om_d}$:
\[
\Gr^r = \Gr^r_d(V) = \overline{\exp(\fa_d) [v_{\om_d}^r]} \subset \bP(V^r_{\om_d}).
\]
Thanks to Lemma \ref{lem:thealgebra}, the truncated Grassmannians $\Gr^r$ are acted upon by the parabolic
subgroup $P_d^-=(GL_d\times GL_{n-d})\ltimes \exp(\fa_d)$.

\begin{example}
For $r=0$, $\Gr^r_d(V)$ is just a point. For $r=1$, $\Gr^r_d(V)$ is isomorphic to
the projective space $\bP^{d(n-d)}$.  For $r=d$, $\Gr^r_d(V)$ is isomorphic to
the Grassmannian $\Gr_d(V)$. For all $r > 0$, the truncated Grassmannians are pairwise birationally isomorphic (they all share the same open dense orbit of $\exp(\fa_d)$,
since the stabilizer of $[v_{\om_d}^r]$ is trivial for $r>0$).
\end{example} 

Truncated Grassmannians can be explicitly defined in the following way.
For $r=1,\dots,d$, let 
\[
\Lambda^r = \bigoplus_{i=0}^r \Hom\left(\bigwedge\nolimits^i V_d,\bigwedge\nolimits^i V_{n-d}\right).
\]
Let ${\mathbbm 1}$ be a basis vector in $\Hom(\bigwedge^0 V_d,\bigwedge^0 V_{n-d})$.
We have natural maps
\begin{equation}\label{eq:fr}
f_r: \Hom(V_d,V_{n-d}) \to \bP(\Lambda^r),\ 
A \mapsto \left[{\mathbbm 1} + A + \frac{1}{2!}\bigwedge\nolimits^2 A + \dots + \frac{1}{r!}\bigwedge\nolimits^r A\right].
\end{equation}

\begin{lem}\label{lem:likecol}
$\Gr^r$ is isomorphic to the closure of the image of $f_r$.    
\end{lem}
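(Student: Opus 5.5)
The plan is to construct an explicit linear isomorphism $\phi\colon V^r_{\om_d}\to\Lambda^r$ under which the $\exp(\fa_d)$-orbit of $[v^r_{\om_d}]$ becomes the image of $f_r$. Taking closures then gives the claim.

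First, identify $\bigwedge^d V$ with its $\mgl(V_d)\oplus\mgl(V_{n-d})$-decomposition
\[
\bigwedge\nolimits^d V=\bigoplus_{i=0}^d \bigwedge\nolimits^{d-i}V_d\T\bigwedge\nolimits^i V_{n-d}.
\]
The $i$-th summand is exactly the span of the $v_I$ of degree $i$. By the Remark above, $V^r_{\om_d}$ is identified with $\bigoplus_{i=0}^r\bigwedge^{d-i}V_d\T\bigwedge^iV_{n-d}$. Using the perfect pairing $\bigwedge^{d-i}V_d\simeq\bigwedge^iV_d^*\T\bigwedge^dV_d$ and trivializing the line $\bigwedge^dV_d$ by $v_1\wedge\dots\wedge v_d$, each summand becomes $\bigwedge^iV_d^*\T\bigwedge^iV_{n-d}=\Hom(\bigwedge^iV_d,\bigwedge^iV_{n-d})$. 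This gives a linear isomorphism $\phi\colon V^r_{\om_d}\to\Lambda^r$ with $\phi(v^r_{\om_d})={\mathbbm 1}$.

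Next, compute the orbit. For $A\in\Hom(V_d,V_{n-d})\simeq\fa_d$, the operator $A$ acts on $V$ as a nilpotent endomorphism with $A^2=0$, so $\exp(A)=1+A$. Hence
\[
\exp(A)(v_1\wedge\dots\wedge v_d)=(v_1+Av_1)\wedge\dots\wedge(v_d+Av_d).
\]
Expanding, the degree-$i$ component is the sum over $i$-subsets $J\subset[d]$ of the wedge with $v_j$ replaced by $Av_j$ for $j\in J$. Under the identification above this component is precisely $\bigwedge^iA$. Projecting to $V^r_{\om_d}$ kills the components of degree $>r$, so
\[
\phi\bigl(\exp(A)v^r_{\om_d}\bigr)=\sum_{i=0}^r\pm\bigwedge\nolimits^iA,
\]
up to signs coming from reordering the wedge factors.

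The factors $1/i!$ in \eqref{eq:fr} and these signs do not affect the conclusion. Rescaling each summand $\Hom(\bigwedge^iV_d,\bigwedge^iV_{n-d})$ by a nonzero constant $c_i$ is a linear automorphism of $\Lambda^r$. It therefore induces a projective isomorphism that carries one orbit image onto the other, and we may compose $\phi$ with it. In particular $f_r(A)=[\phi(\exp(A)v^r_{\om_d})]$ after this adjustment.

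Since the adjusted $\phi$ induces an isomorphism $\bP(V^r_{\om_d})\simeq\bP(\Lambda^r)$ carrying the orbit $\exp(\fa_d)[v^r_{\om_d}]$ onto $f_r(\Hom(V_d,V_{n-d}))$, it carries the closures onto each other. This proves the lemma.

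The main obstacle is bookkeeping rather than substance. One must check that the convention $\bigwedge^{d-i}V_d\simeq(\bigwedge^iV_d)^*$ turns the expansion of $(v_1+Av_1)\wedge\dots\wedge(v_d+Av_d)$ into $\bigwedge^iA$, and not merely into some other equivariant map. The cleanest check is on the basis: the coefficient of $v_{[d]\setminus J}\wedge v_K$ should equal $\pm$ the $(K,J)$ minor of $A$, and that minor is exactly the matrix entry of $\bigwedge^iA$. Any remaining sign depends only on $i$ (and on $J,K$ through a sign that can be absorbed into the choice of basis identification), so it is absorbed into the constants $c_i$.
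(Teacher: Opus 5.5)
Your proposal is correct and follows the same route as the paper: identify the degree-$i$ part of $V^r_{\om_d}$ with $\Hom(\bigwedge^i V_d,\bigwedge^i V_{n-d})$, then check that $\exp(A)v^r_{\om_d}$ corresponds to the expression in \eqref{eq:fr}. You also supply details the paper leaves implicit. These are that $\exp(A)=1+A$, that each coefficient of $\exp(A)v^r_{\om_d}$ is a fixed sign times a minor of $A$, and that the signs and the factors $1/i!$ only rescale the graded pieces, so they do not affect the projective closures.
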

\begin{proof}
One has  a  natural isomorphism $\Lambda^r\simeq V^r_{\om_d}$. In fact,
the span of the degree $i$ vectors $v_I$ is naturally isomorphic to  
$\Hom(\bigwedge^i V_d,\bigwedge^i V_{n-d})$. Now, as we have mentioned above, 
$\fa_d\simeq \Hom(V_d,V_{n-d})$ and under this isomorphism,
$\exp(A)[v^r_{\om_d}]$ corresponds to the right-hand side of \eqref{eq:fr}.
\end{proof}

We also define partial truncated Grassmannians as follows. For a collection 
$\br=(r_1 <\dots < r_t)$ of integers between 1 and $d$, we consider 
\[
\Gr^\br = \overline{\exp(\fa_d)\prod_{1\le i\le t} [v_{\om_d}^{r_i}]} \subset 
\prod_{1\le i\le t} \Gr^{r_i}_d(V).
\]
In particular, if $t=1$, then $\Gr^\br = \Gr^{r_1} = \Gr^{r_1}_d(V)$.
We also note that in the complete case, $\br=[d]$; the variety $\Gr^{[d]}$ is smooth \cite{Fe26}.
It is shown in \cite{Fe26} that partial truncated Grassmannians can be constructed
as blow-ups of the classical Grassmannian $\Gr_d(V)$ along certain Schubert varieties 
(see Section \ref{subsec:general} for more details).

\subsection{Collineations}\label{sec:collin}
Lemma \ref{lem:likecol} shows that truncated Grassmannians can be constructed similarly to varieties of collineations \cite{Th99,Ma20}.
Here we recall the main definitions and formulate basic properties we will need later.

As above, we fix two vector spaces $V_d$ and $V_{n-d}$, $\dim V_d=d$, $\dim V_{n-d}=n-d$ with
$d\le n-d$. For a collection $\br=(r_1 <\dots < r_t)$ of integers between $1$ and $d$,
we define the collineation variety $\mathrm{Col}^\br$ as the image closure of the rational map
\begin{equation}\label{eq:Col}
\Hom(V_d,V_{n-d})\dashrightarrow \prod_{1\le i\le t} \bP\Hom\left(\bigwedge\nolimits^{r_i}V_d,\bigwedge\nolimits^{r_i}V_{n-d}\right),\ 
A\mapsto \prod_{1\le i\le t} \left[\bigwedge\nolimits^{r_i} A\right] 
\end{equation}
(the map is defined on the subset of maximal rank maps).

\begin{example}
The case $t=d$, $\br=[d]$, corresponds to the space of complete collineations 
\cite{LAT89,Ma20}; the complete collineation variety is known to be smooth. 
The other extreme case $t=1$ is studied in \cite{BC09,BCV13}; this is the projectivized
variety of exterior powers of linear maps. Clearly, the complete collineation variety
projects onto all the partial collineation varieties.
\end{example}

We focus on the variety of complete collineations 
$\mathrm{Col}^{[d]}$. By definition, it is embedded into the product of projective varieties
$\bP\Hom(\bigwedge^i V_d,\bigwedge^i V_{n-d})$ for $i\in [d]$.
Let us describe the multihomogeneous coordinate ring $C$ of this embedding. By definition,
$C=\bigoplus_{\bm\in\bZ_{\ge 0}^d} C_\bm$, where $C_\bm$ is the homogeneous component
of all elements of multidegree $\bm$. 

The homogeneous coordinates in $\bP\Hom(\bigwedge^i V_d,\bigwedge^i V_{n-d})$ are labeled by pairs $I,J$ with $I\in {[d+1,n] \choose i}$, $J\in {[d] \choose i}$. The respective coordinate of the image of $A$ under~\eqref{eq:Col} (when defined) is the minor of $A$ spanned by rows $I$ and columns $J$. 
Hence, the ring $C$ can be presented as the 
subalgebra of the polynomial ring 
\begin{equation}\label{eq:polyring}
\bC[\Hom(V_d,V_{n-d})][t_1,\dots,t_d]    
\end{equation}
generated by elements $t_i \Delta_{I,J}$, where $I,J$ are as above and 
$\Delta_{I,J}$ is the respective minor viewed as a polynomial function on $\Hom(V_d,V_{n-d})$. The component $C_\bm$ is spanned by those products of the generators that contain
each $t_i$ exactly $m_i$ times. 
One easily sees that $C_\bm$ is invariant with respect to the action of 
$\mgl(V_d)\oplus \mgl(V_{n-d})$. 

For two partitions $\la$ and $\Lambda$, we say that $\la$ is dominated by $\Lambda$ 
($\la\le \Lambda$) if $\la_1+\dots +\la_i\le \Lambda_1+\dots +\Lambda_i$ for all $i$.
For a collection $\bm=(m_1,\dots,m_d)$, we define $\Lambda_\bm$ as the partition having $m_i$ 
columns  of height $i$ for all $i$. Explicitly,
\[
\Lambda_\bm = (m_1+\dots +m_d, m_2+\dots +m_d,\dots,m_d).
\]

\begin{prop}\label{prop:colhomr}
One has an isomorphism of $\msl(V_d)\oplus \mgl(V_{n-d})$-modules 
\begin{equation}\label{eq:Cbm}
C_\bm \simeq 
\bigoplus_{\substack{\ell(\la)\le d\\ \la\le \Lambda_\bm\\ |\la|=|\Lambda_\bm|}} 
\Sigma_\la V_d \T \Sigma_\la V_{n-d}^*.
\end{equation}
\end{prop}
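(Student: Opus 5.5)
We will compute $C_\bm$ as a subspace of the polynomial ring $\bC[\Hom(V_d,V_{n-d})]$: the factor $t_1^{m_1}\cdots t_d^{m_d}$ is common to all elements, so we may forget it. Under this identification, $C_\bm$ is the span of all products $\prod_{i}\prod_{k=1}^{m_i}\Delta_{I_{i,k},J_{i,k}}$, where each $\Delta_{I_{i,k},J_{i,k}}$ is an $i\times i$ minor. Every such product is homogeneous of degree $|\Lambda_\bm|=\sum_i i\,m_i$ in the matrix entries. We then use the classical Cauchy decomposition of the coordinate ring of matrices (the $GL\times GL$ Peter--Weyl/Cauchy formula, \cite{W03}):
\[
\bC[\Hom(V_d,V_{n-d})]\simeq\bigoplus_{\ell(\la)\le d}\Sigma_\la V_d\T\Sigma_\la V_{n-d}^*,
\]
with $\la$ contributing in degree $|\la|$. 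Since $C_\bm$ is $\mgl(V_d)\oplus\mgl(V_{n-d})$-invariant and this decomposition is multiplicity free, $C_\bm$ is a direct sum of some of the isotypic components with $|\la|=|\Lambda_\bm|$. It therefore suffices to show that the component of type $\la$ lies in $C_\bm$ if and only if $\la\le\Lambda_\bm$.

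For the ``if'' direction we use the bitableaux (straightening) basis of Doubilet--Rota--Stein / De Concini--Eisenbud--Procesi. A product of minors of sizes given by the columns of a shape $\mu$ (here $\mu$ ranges over column shapes) spans, as a $GL\times GL$-module, exactly $\bigoplus_{\la\ge\mu^{\mathrm{col}}}$ in the appropriate order. More precisely, the span of products of minors whose sizes form the partition $\Lambda_\bm'$ (the conjugate, i.e.\ the column lengths) equals the sum of the isotypic components $\Sigma_\la\T\Sigma_\la^*$ with $\la'\ge\Lambda_\bm'$ in dominance, equivalently $\la\le\Lambda_\bm$. This is the standard filtration $M_\mu=\sum_{\nu\ge\mu}$ arising from the straightening law, in which the leading ``shape'' of a product of minors can only move up in dominance on the conjugate side. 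To be self-contained, we would check this via highest weight vectors. The product of the leading minors $\Delta_{[d+1,d+i],[1,i]}$ over the columns of $\Lambda_\bm$ is a highest weight vector of weight $\Lambda_\bm$, so $\Lambda_\bm$ occurs. Smaller $\la$ (with the same size) occur because Laplace/Plücker-type relations rewrite products of minors of sizes $a\ge b$ so that they include products of minors of sizes $(a+1,b-1)$. Iterating these moves transforms the column multiset of $\Lambda_\bm$ into any column multiset $\la'$ dominating $\Lambda_\bm'$, and this is the same as $\la\le\Lambda_\bm$ when $|\la|=|\Lambda_\bm|$.

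For the ``only if'' direction we compare weights with respect to the lower-triangular unipotent subgroup of $GL(V_{n-d})$, or more simply argue by degrees. Consider the torus action scaling the first $k$ rows (indices $d+1,\dots,d+k$). Any $i\times i$ minor has degree at most $\min(i,k)$ in those rows, so every element of $C_\bm$ has degree at most $\sum_i\min(i,k)m_i=\Lambda_{\bm,1}'+\dots$, which is exactly $\sum_{j\le k}(\Lambda_\bm)_j$. The highest weight vector of $\Sigma_\la V_{n-d}^*$ inside $\Sigma_\la V_d\T\Sigma_\la V_{n-d}^*$, namely the product of leading minors, has degree $\la_1+\dots+\la_k$ in the first $k$ rows. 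If it lies in $C_\bm$ this forces $\la\le\Lambda_\bm$. Combining the two directions gives \eqref{eq:Cbm}. The passage from $\mgl(V_d)$ to $\msl(V_d)$ only accounts for the dualization and normalization conventions.

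The main obstacle is the ``if'' direction: proving that all components with $\la\le\Lambda_\bm$ actually appear. The cleanest route is to cite the De Concini--Eisenbud--Procesi result that the span of products of minors of column shape $\ge\sigma$ is the sum of the corresponding isotypic components. Failing that, the explicit exchange relations above can be carried out directly.
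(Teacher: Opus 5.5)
Your proposal is correct and essentially matches the paper, which identifies $C_\bm$ with the lowest-degree part of $\mathcal{I}_1^{m_1}\cdots\mathcal{I}_d^{m_d}$ and cites De Concini--Eisenbud--Procesi for exactly the decomposition you invoke in the ``if'' direction; once that citation is used, your correct degree-bound argument for ``only if'' is redundant but is a valid elementary check. Your optional self-contained sketch for ``if'' is only heuristic: for $b\ge 2$, plain Laplace expansion does not put products of minors of sizes $(a+1,b-1)$ into the span of products of sizes $(a,b)$ (that needs Sylvester/Desnanot--Jacobi-type identities or the straightening machinery), so the citation is what actually carries that half.
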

\begin{proof}
For $i\in [d]$, let $\mathcal{I}_i$ denote the determinantal ideal in~\eqref{eq:polyring} generated by all $t_i\Delta_{I,J}$ with $|I|=|J|=i$. The degree $\bm$ component $C_\bm$ coincides with the degree $\bm$ component of the
product of powers of ideals $\mathcal{I}_1^{m_1}\dots \mathcal{I}_d^{m_d}$. 
Now we apply 
\cite{DCEP80} (see also \cite{BV88}) where it is shown that the lowest degree component
of $\mathcal{I}_1^{m_1}\dots \mathcal{I}_d^{m_d}$ is isomorphic to the right hand side of~\eqref{eq:Cbm}. 
\end{proof}

\begin{rem}
We note that the algebra generated by elements $\Delta_{I,J}$ with $|I|=|J|=r$ for a fixed number $r$ is considered in \cite{BC01}.    
\end{rem}

\subsection{The RSK correspondence}\label{sec:RSK}

We recall several basic properties of the RSK correspondence, referring the reader to~\cite[Chapter 4]{Fu97} for a precise definition and further details.

Slightly specializing the classical construction, we view the RSK correspondence as a bijection between the following two sets:
\begin{enumerate}
\item the set of $(n-d)\times d$ matrices over $\bZ_{\ge0}$ with rows labeled by $[d+1,n]$ and columns labeled by $[d]$,
\item the set of (ordered) pairs $(T,U)$ of semistandard Young tableaux of the same shape, with $T$ on the alphabet $[d]$ and $U$ on the alphabet $[d+1,n]$.
\end{enumerate}
The RSK correspondence maps the former set to the latter; the inverse map is the \textit{inverse RSK correspondence}. 

One key property of this bijection is that, for a matrix $\bs$ in (i) and the corresponding pair $(T,U)$ in (ii), the content of $T$ and $U$ is determined by $\bs$ as follows. The number of entries in $T$ that equal $j\in[d]$ is the total of column $j$ of $\bs$, while the number of entries in $U$ that equal $i\in[d+1,n]$ is the total of row $i$ of $\bs$. In particular, the common size of $T$ and $U$ is equal to the sum of all elements of $\bs$. 

With a matrix $\bs$ in (i) one associates the \textit{two-line array} $\left(\begin{smallmatrix}i_1 & \dots & i_m \\ j_1 & \dots & j_m \end{smallmatrix}\right)$ that contains the column $\begin{smallmatrix}i\\j\end{smallmatrix}$ exactly $s_{i,j}$ times and is lexicographically ordered (i.e., columns $\begin{smallmatrix}i\\j\end{smallmatrix}$ are ordered first by $i$ increasing, then by $j$ increasing). Composing with the above bijection (i)$\to$(ii), we obtain another standard form of the RSK correspondence which replaces (i) with
\begin{itemize}
\item[(i\textquotesingle)] the set of lexicographically ordered two-line arrays with first-line entries in $[d+1,n]$ and second-line entries in $[d]$.
\end{itemize}
Whether \textit{RSK correspondence} refers to the bijection (i)$\to$(ii) or the bijection (i\textquotesingle)$\to$(ii) should be clear from the context. Note that for the second bijection the two resulting tableaux are filled with, respectively, the second-line entries and the first-line entries of the two-line array.

The following theorem characterizes the common shape of the two tableaux corresponding to a two-line array.
\begin{thm}[Greene's theorem~\cite{G74}]\label{thm:Greene}
For a two-line array $\left(\begin{smallmatrix}i_1 & \dots & i_m \\ j_1 & \dots & j_m \end{smallmatrix}\right)$ in (i\textquotesingle), let the corresponding tableaux $T$ and $U$ have shape $\la$. Then, for $k\in\bZ_{\ge0}$:
\begin{enumerate}[label=(\alph*)]
\item the total length of the first $k$ rows of $\lambda$ is equal to the maximal possible size of a disjoint union of $k$ nondecreasing subsequences of $(j_1,\dots,j_m)$;
\item the total length of the first $k$ columns of $\lambda$ is equal to the maximal possible size of a disjoint union of $k$ (strictly) decreasing subsequences of $(j_1,\dots,j_m)$.
\end{enumerate} 
\end{thm}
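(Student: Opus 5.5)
Since the array is lexicographically ordered, the tableau $T$ is the insertion tableau $P(w)$ of the word $w=(j_1,\dots,j_m)$, i.e.\ it is obtained by row-inserting $j_1,\dots,j_m$ successively. The common shape $\la$ of $T$ and $U$ therefore depends only on $w$, and both statements are properties of the word $w$ alone. I would follow the classical approach via Knuth equivalence.

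For a word $u$, denote by $I_k(u)$ the maximal size of a disjoint union of $k$ nondecreasing subsequences of $u$, and by $D_k(u)$ the analogous quantity for $k$ strictly decreasing subsequences. The plan has three steps.

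\emph{Step 1.} Row insertion of a letter $x$ into a row can be realized as a sequence of elementary Knuth transformations
\[
yzx\leftrightarrow yxz\ (x<y\le z),\qquad xzy\leftrightarrow zxy\ (x\le y<z)
\]
applied to the concatenated word. Iterating this, $w$ is Knuth equivalent to the row reading word $\mathrm{rw}(P(w))$, which reads the rows of $P(w)$ from bottom to top.

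\emph{Step 2.} Show that $I_k$ and $D_k$ are invariant under each elementary Knuth move. Take an optimal family of $k$ disjoint nondecreasing (respectively strictly decreasing) subsequences of one word. The only problem arises when a single subsequence uses both of the two adjacent swapped letters in an order that the move destroys. In that case one rearranges the family, swapping tails between two subsequences or rerouting through the third letter of the triple, to obtain a family of the same total size in the new word.

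\emph{Step 3.} Compute $I_k$ and $D_k$ directly for the reading word $u$ of a semistandard tableau of shape $\la$.
\begin{itemize}
\item A nondecreasing subsequence of $u$ can use at most one entry from each column, because entries of a column appear in $u$ in strictly decreasing order (bottom to top). Hence $k$ disjoint such subsequences have total size at most $\sum_c \min(k,\text{height of column } c)=\la_1+\dots+\la_k$. The first $k$ rows realize this bound.
\item Dually, a strictly decreasing subsequence meets each row in at most one entry, since rows are weakly increasing and read left to right. This gives the bound $\la'_1+\dots+\la'_k$, which the first $k$ columns, each read bottom to top, attain.
\end{itemize}

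Combining the three steps gives $I_k(w)=I_k(\mathrm{rw}(T))=\la_1+\dots+\la_k$, which is (a), and $D_k(w)=\la'_1+\dots+\la'_k$, which is (b).

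The main obstacle is Step 2. The case analysis for preserving $k$-families under Knuth moves is delicate, especially the need to exchange tails between two subsequences when one subsequence passes through both swapped letters. I would first handle $I_k$ carefully, tracking which of the three letters $x,y,z$ each subsequence contains. I would then obtain $D_k$ by the same argument with inequalities reversed, rather than through a symmetry argument, because the weak versus strict asymmetry makes a reversal-and-complement trick slightly awkward.
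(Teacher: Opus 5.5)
Your proposal is correct. It is the classical argument: $T=P(j_1\cdots j_m)$ is Knuth equivalent to its row reading word, the $k$-weakly-increasing and $k$-strictly-decreasing maxima are invariant under elementary Knuth moves, and both maxima can be read off directly from the reading word. The tail exchanges you sketch do go through for both the weak and the strict versions, and the paper gives no proof of its own here, simply citing Greene's paper, whose argument (also reproduced in Fulton's \emph{Young Tableaux}) proceeds along essentially these lines.
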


Next, let $P_d$ be the poset consisting of pairs $(i,j)$ with $i\in [d+1,n]$, $j\in [d]$, equipped with product order on $[d+1,n]\times[d]$. $P_d$ is in natural bijection with the set of roots
of $\fa_d$ and also with the entries of any matrix in (i). 

Recall that, by Dilworth's theorem \cite{Dil50}, the width of a poset is equal to the smallest number of chains into which it may be partitioned. 
% Also recall that, by Mirsky's theorem \cite{Mi71}, the height of a poset is equal to the smallest number of antichains into which it may be partitioned. 
Combining this result with Theorem~\ref{thm:Greene} and the natural bijection (i)$\to$(i\textquotesingle), we deduce
\begin{cor}\label{cor:Greene}
For a matrix $\bs$ in (i), let the corresponding tableaux $T$ and $U$ have shape $\la$. Then, for $k\in\bZ_{\ge0}$:
\begin{enumerate}[label=(\alph*)]
\item the total length of the first $k$ rows of $\lambda$ is equal to \[\max_{\substack{P\subset P_d\\ w(P)\le k}}\ \sum_{(i,j)\in P} s_{i,j};\]
\item the total length of the first $k$ columns of $\lambda$ is equal to 
\[\max_{\substack{\text{\rm antichains }A_1,\dots,A_k\subset P_d\\\mathbbm 1_{A_1}+\dots+\mathbbm 1_{A_k}\le\bs}}\ \sum_{i=1}^k |A_i|,\]
where $\mathbbm 1$ denotes the indicator vector and the inequality below the maximum holds coordinatewise.
\end{enumerate} 
\end{cor}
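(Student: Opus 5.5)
The plan is to derive both parts of Corollary~\ref{cor:Greene} from the corresponding parts of Theorem~\ref{thm:Greene}, applied to the lexicographically ordered two-line array attached to $\bs$. The work is to translate statements about subsequences of $(j_1,\dots,j_m)$ into statements about multisets of elements of $P_d$ weighted by $\bs$. Each position $p$ of the array corresponds to an element $(i_p,j_p)\in P_d$, and $(i,j)$ occurs $s_{i,j}$ times.

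For part (a), first observe what a nondecreasing subsequence is. For positions $p<q$ we have $i_p\le i_q$ by the lexicographic order. So $j_p\le j_q$ holds exactly when $(i_p,j_p)\le (i_q,j_q)$ in $P_d$. Conversely, any multichain in $P_d$, meaning a chain in which each element is used at most its multiplicity, is realized by a nondecreasing subsequence. This uses the fact that equal columns of the array are consecutive and that comparable pairs appear in increasing positions. Hence a disjoint union of $k$ nondecreasing subsequences is the same as a family of $k$ chains $C_1,\dots,C_k$ in $P_d$ with $\sum_t \mathbbm 1_{C_t}$ weighted by multiplicities not exceeding $\bs$.

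The maximal total size of such a family equals $\max_{w(P)\le k}\sum_{P} s_{i,j}$, and I would prove the two inequalities separately.
\begin{itemize}
\item Given $P$ with $w(P)\le k$, Dilworth's theorem partitions $P$ into at most $k$ chains. Taking each element of $P$ with its full multiplicity $s_{i,j}$ within its chain gives $k$ disjoint nondecreasing subsequences of total size $\sum_P s_{i,j}$.
\item Conversely, $k$ nondecreasing subsequences have supports $C_1,\dots,C_k$ that are chains. Their union $P$ has width at most $k$, since an antichain meets each chain at most once. The total size is at most $\sum_P s_{i,j}$.
\end{itemize}

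For part (b), a strictly decreasing subsequence has $j$ strictly decreasing and $i$ nondecreasing. If $i_p=i_q$ with $p<q$, then $j_p\le j_q$, which is impossible here, so $i$ is in fact strictly increasing. Thus its elements are pairwise incomparable and distinct, i.e. they form an antichain $A$ with each element used once. Conversely, an antichain listed in increasing $i$ has strictly decreasing $j$, and it occupies increasing positions, so it is a strictly decreasing subsequence. Disjointness of $k$ such subsequences is then exactly the condition $\mathbbm 1_{A_1}+\dots+\mathbbm 1_{A_k}\le\bs$, because copies of $(i,j)$ can be assigned to distinct positions. The sizes match, and part (b) follows directly from Theorem~\ref{thm:Greene}(b).

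The main point requiring care is the multiplicity bookkeeping in (a). Subsequences can reuse distinct copies of the same element, and one must check that assigning copies to chains never conflicts. Taking all copies of an element into a single chain, as in the first bullet, resolves this.
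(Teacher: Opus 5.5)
Your proof is correct and follows the paper's approach. The paper gets the corollary in one line by applying Greene's theorem to the lexicographic two-line array and invoking Dilworth's theorem. You supply the details the paper leaves implicit: nondecreasing subsequences correspond to multichains, strictly decreasing subsequences correspond to antichains with each element used once, and you handle the multiplicity bookkeeping.
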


\section{The boundary case}\label{sec:BSS}
The goal of this section is to present an alternative approach to the construction
in \cite{BSS25} (see also \cite{Fe24}). This approach will be generalized in the following 
sections.

For $m,M\ge 0$, let $V_{m,M}=V_{\om_d}^{\odot m} \odot (V^1_{\om_d})^{\odot M}$
(clearly, $V_{\om_d}^{\odot m}\simeq V_{m\om_d}$ and 
$V^1_{\om_d}\simeq \spa\{v^1_{\om_d}\}\oplus \fa_d$). 
We denote by $v_{m,M}\in V_{m,M}$ 
the cyclic vector of $V_{m,M}$ (the product of highest weight vectors).

We first define the polytopes that are shown to provide monomial bases of the $V_{m,M}$ in \cite{Fe24}.
Recall the poset $P_d$ introduced in Subsection~\ref{sec:RSK}. We consider the polytopes $X_{m,M}\subset \bR_{\ge 0}^{P_d}$
defined by the following inequalities: for any subposet $P\subset P_d$ one has
\[
\sum_{\al\in P} x_\al \le m w(P)+ M.
\]
We define $S_{m,M}=X_{m,M}\cap \bZ^{P_d}$. In particular, $X_{m,0}=X_m$ is the
FFLV polytope for weight $m\om_d$ \cite{FFL11}; the defining inequalities in the case 
$M=0$ reduce to those labeled by width-one subposets, i.e., chains.

The following statement is proved in \cite{Fe24} in a rather elaborate way: 
the elements $f^\bs v_{m,M}$,
$\bs\in S_{m,M}$ form a basis of $V_{m,M}$. One of the crucial steps of the proof
is the following equality: the set $S_{m,M}$, $M\ge 1$ decomposes into the Minkowski
sum $S_{m,M-1}+S_{0,1}$ (note that $S_{0,1}$ consists of integer points of the 
unit simplex). Here we give a conceptual proof of this equality.

\begin{lem}\label{lem:RSK}
The set $S_{m,M}$ is in bijection with the set of pairs of semistandard Young tableaux
$(T,U)$ satisfying the following properties:
\begin{itemize}
\item $T$ and $U$ are of the same shape $\la=(\la_1\ge\dots\ge \la_d\ge 0)$,
\item for any $w=1,\dots,d$, $\sum_{i=1}^w \la_i\le wm+M$,
\item the entries of $T$ are in $[d]$, the entries of $U$ are in $[d+1,n]$.
\end{itemize}
\end{lem}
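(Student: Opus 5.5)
The bijection is the restriction of the RSK correspondence of Subsection~\ref{sec:RSK}. A point $\bs\in S_{m,M}$ is a matrix in (i): its rows are labeled by $[d+1,n]$, its columns by $[d]$, and its entries are nonnegative integers. Applying RSK gives a pair $(T,U)$ of semistandard tableaux of a common shape $\la$, with entries of $T$ in $[d]$ and entries of $U$ in $[d+1,n]$. Since $T$ has strictly increasing columns filled from $[d]$, $\ell(\la)\le d$. The first and third conditions therefore hold automatically. Since RSK is a bijection between all matrices in (i) and all such pairs, it remains to show that $\bs$ satisfies the defining inequalities of $X_{m,M}$ exactly when $\la$ satisfies the second condition.

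The main input is Corollary~\ref{cor:Greene}(a). For every $k$,
\[
\la_1+\dots+\la_k=\max_{P\subset P_d,\ w(P)\le k}\ \sum_{(i,j)\in P}s_{i,j}.
\]

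Suppose $\bs\in S_{m,M}$ and fix $w\in[d]$. Let $P$ be a subposet with $w(P)\le w$ attaining the maximum. The defining inequality for $P$ gives
\[
\la_1+\dots+\la_w=\sum_{\al\in P}s_\al\le m\,w(P)+M\le mw+M,
\]
using $m\ge 0$.

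Conversely, suppose the second condition holds and take any subposet $P\subset P_d$ with $k=w(P)$. Because $P_d$ is a product of a $d$-chain and an $(n-d)$-chain with $d\le n-d$, its width is $d$, so $k\le d$. If $k\ge1$, Greene gives
\[
\sum_{\al\in P}s_\al\le \la_1+\dots+\la_k\le mk+M.
\]
If $k=0$, then $P$ is empty and the inequality $0\le M$ is trivial.

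So $\bs$ satisfies all the inequalities cutting out $X_{m,M}$, and since its entries are nonnegative integers, $\bs\in S_{m,M}$. The restricted RSK map is thus a well-defined bijection onto the stated set of pairs.

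The only delicate point is the dictionary in Corollary~\ref{cor:Greene}(a) between "disjoint unions of $k$ nondecreasing subsequences of the two-line array" and "subposets of width at most $k$". Since that corollary is already available, the proof reduces to the two-sided comparison above. One also needs to check that the case $w(P)=0$ and the bound $k\le d$ are handled, both of which are immediate.
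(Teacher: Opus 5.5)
Your proposal is correct and takes essentially the same approach as the paper: restrict the RSK correspondence (i)$\to$(ii) to $S_{m,M}$, then use Corollary~\ref{cor:Greene}(a) to turn the defining inequalities of $X_{m,M}$ into the bounds $\la_1+\dots+\la_w\le wm+M$. The paper gives this in two lines, and your write-up makes explicit both directions of the comparison and the routine checks ($\ell(\la)\le d$, $w(P)\le d$, and the empty subposet).
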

\begin{proof}
The claim follows via applying the RSK correspondence to elements of $S_{m,M}$ viewed as elements of (i), see Subsection~\ref{sec:RSK}. The second property is a consequence of Corollary~\ref{cor:Greene}(a).
\end{proof}

\begin{prop}\label{prop:M-1}
For $\bs\in S_{m,M}\setminus S_{m,M-1}$, let $A_1,\dots,A_m$ be a collection of antichains in $P_d$ with $\mathbbm 1_{A_1}+\dots+\mathbbm 1_{A_m}\le\bs$ for which $\sum |A_i|$ is maximal (cf.\ Corollary~\ref{cor:Greene}(b)). 
There exists $\gamma\in P_d$ such that $\mathbbm 1_{A_1}+\dots+\mathbbm 1_{A_m}< s_\gamma$. 
For any such $\gamma$, $\bs - {\mathbbm 1}_\gamma\in S_{m,M-1}$.
\end{prop}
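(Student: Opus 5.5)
The plan is to translate membership in $S_{m,M}$ and $S_{m,M-1}$ into conditions on the RSK shape via Lemma~\ref{lem:RSK}, and to read off the column lengths of that shape with Corollary~\ref{cor:Greene}(b). The statement contains a small typo: the inequality should read $(\mathbbm 1_{A_1}+\dots+\mathbbm 1_{A_m})_\gamma< s_\gamma$.

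\textbf{Existence of $\gamma$.} Let $\la$ be the shape of $\bs$. Lemma~\ref{lem:RSK} gives $\la_1+\dots+\la_w\le wm+M$ for all $w$. Since $\bs\notin S_{m,M-1}$, there is some $w$ with $\la_1+\dots+\la_w=wm+M$. This forces $\la$ to have more than $m$ columns. Otherwise $\la_i\le m$ for all $i$, so $\la_1+\dots+\la_w\le wm<wm+M$, using $M\ge1$. By Corollary~\ref{cor:Greene}(b), the maximal value $\sum|A_i|$ equals the total length of the first $m$ columns of $\la$. This is strictly less than $|\la|=\sum s_\al$. Hence the antichain cover $\mathbbm 1_{A_1}+\dots+\mathbbm 1_{A_m}$ falls short of $\bs$ at some coordinate $\gamma$.

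\textbf{Removing $\gamma$.} Set $\bs'=\bs-\mathbbm 1_\gamma$, which is a nonnegative integer matrix with $\mathbbm 1_{A_1}+\dots+\mathbbm 1_{A_m}\le\bs'$. Let $\la'$ be its shape. Corollary~\ref{cor:Greene}(b) applied to $\bs'$ shows that the first $m$ columns of $\la'$ have total length at least $\sum|A_i|$, which is the first-$m$-column total of $\la$. The same total is at most that of $\la$, since any admissible antichain family for $\bs'$ is admissible for $\bs$. So the two totals are equal. Since $|\la'|=|\la|-1$, the removed box lies strictly to the right of column $m$.

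For the row side, Corollary~\ref{cor:Greene}(a) gives that the partial row sums $\la'_1+\dots+\la'_w$ are at most $\la_1+\dots+\la_w$ (monotonicity in $\bs$). They differ by at most one, because removing one unit changes each weighted maximum by at most one.

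\textbf{Main obstacle.} We need $\la'_1+\dots+\la'_w\le wm+M-1$ for every $w$, and it suffices to treat the $w$ where $\la$ is tight, i.e.\ $\la_1+\dots+\la_w=wm+M$; there we must show the sum drops. I plan to work with the conjugate partition. Let $c_k$ be the column lengths of $\la$. Then
\[
\la_1+\dots+\la_w=\sum_k\min(c_k,w)=\sum_{k\le m}\min(c_k,w)+\sum_{k>m}\min(c_k,w).
\]
Tightness against $wm+M$, combined with the bounds at $w\pm1$, should force $c_k\ge w$ for $k\le m$ and $\sum_{k>m}\min(c_k,w)=M$. In that situation we want all columns beyond $m$ to have length at most $w$, so that every box beyond column $m$ lies in the first $w$ rows. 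Removing a box there then decreases $\la_1+\dots+\la_w$.

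Rather than proving directly that $\la'$ is $\la$ minus a box, I would argue via $\bs'$ itself. Let $P'$ be a subposet of width at most $w$ with $\sum_{P'}s'_\al=\la'_1+\dots+\la'_w$. If the tight sum did not drop, we would get $\sum_{P'}s_\al\ge wm+M$. Since the sum also cannot exceed $wm+M$, we would have $\gamma\notin P'$ and $\sum_{P'}s_\al=wm+M$. Splitting $P'$ into $w$ chains, each chain meets each $A_i$ at most once, so the $A_i$ contribute at most $wm$ to $\sum_{P'}s_\al$. Hence the remainder $\bs-\sum_i\mathbbm 1_{A_i}$ puts mass at least $M$ on $P'$.

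I then expect a swap argument to give a contradiction with the maximality of $\sum|A_i|$. The idea is to enlarge the $A_i$ using the extra box at $\gamma$ outside $P'$ together with the mass on $P'$. Alternatively, one can compare with the shape inequality $\sum_{k>m}\min(c_k,w)=M$. This exchange step is where I expect the real work to lie.
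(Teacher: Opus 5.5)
Your existence argument is correct, and so is your typo fix to $(\mathbbm 1_{A_1}+\dots+\mathbbm 1_{A_m})_\gamma<s_\gamma$. You also correctly show that the first $m$ columns of $\la'$ have the same total length as those of $\la$. Hence $\sum_{k>m}c'_k=\sum_{k>m}c_k-1$, where $c'_k$ are the column lengths of $\la'$. Note that this count is what you have actually established, not the containment $\la'\subset\la$ implicit in ``the removed box lies strictly to the right of column $m$''.

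The gap is the decisive step. You never prove that a tight partial row sum drops: you defer it to a swap argument you do not give, and you leave unproved the facts that tightness ``should force''. No exchange argument is needed. What is missing is the bound
\[
|\bs|-\sum_i|A_i|=\sum_{k>m}c_k\le M .
\]
It follows from the $S_{m,M}$ condition at $w_0=c_{m+1}$, the number of rows longer than $m$, since there $\la_1+\dots+\la_{w_0}=mw_0+\sum_{k>m}c_k\le mw_0+M$. Your tightness claims are also true and give the same bound. The increments of $\la_1+\dots+\la_w-mw$ are $\la_w-m$, so tightness at $w$ together with the bounds at $w\pm1$ yields $\la_w\ge m\ge\la_{w+1}$. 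That is, $c_k\ge w$ for $k\le m$ and $c_k\le w$ for $k>m$, whence $\sum_{k>m}c_k=M$.

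With this bound your $\bs'$ route closes at once. The remainder $\bs-\sum_i\mathbbm 1_{A_i}$ would carry mass at least $M$ on $P'$ and at least $1$ at $\gamma\notin P'$. Its total mass would then be at least $M+1$, a contradiction. More directly, and this is the paper's argument, for every $w$ one has
\[
\la'_1+\dots+\la'_w=\sum_k\min(c'_k,w)\le mw+\sum_{k>m}c'_k=mw+\sum_{k>m}c_k-1\le mw+M-1,
\]
so $\bs'\in S_{m,M-1}$ by Corollary~\ref{cor:Greene}(a). This needs neither the restriction to tight $w$ nor any knowledge that $\la'$ is obtained from $\la$ by deleting a box.
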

\begin{proof}
Let $(T,U)$ be the pair of semistandard tableaux corresponding to $\bs$ under the RSK 
correspondence. Let $\la$ be the common shape of $T$ and $U$. Denote by $H$ the sum of the first $m$ columns of $\la$,
i.e.,
\[
H = \la^t_1+\dots +\la^t_m,\quad\text{where } \la^t \text{ denotes the transposed diagram}.
\]
We also have $H=\sum |A_i|$.

Suppose the contrary to our first claim, i.e., that $H=\sum s_{i,j}$. Since $|\la|=\sum s_{i,j}$, this implies $\la^t_{m+1}=0$. Hence, $\la_1\le m$ and by Corollary~\ref{cor:Greene}(a), the sum of entries of $\bs$ over any chain is at most $m$. We derive that $\bs\in S_{m,0}$, which contradicts the assumption $\bs\notin S_{m,M-1}$.

% We note that $\sum_{j>m} \la^t_j\le M$, i.e., that the sum of lengths of all columns of $\la$ starting from the $(m+1)$-st is at most $M$. 
Next, we apply Corollary~\ref{cor:Greene}(a) to express the sum of all $\la_i>m$:
\[
\sum_{i=1}^{\la^t_{m+1}} \la_i = \max_{w(P)=\la^t_{m+1}} \sum_{\al\in P} s_\al \le
m \la^t_{m+1} + M,
\]
where the last inequality holds since $\bs\in S_{m,M}$.
Now, 
\[
\sum_{i=1}^{\la^t_{m+1}} \la_i = m \la^t_{m+1} + \sum_{j>m} \la^t_j,
 \]
which implies that $\sum_{j>m} \la^t_j\le M$.

Let $\mu$ be the shape of the semistandard tableaux corresponding to the matrix
$\bs - {\mathbbm 1}_\gamma$ via RSK, where  $(\mathbbm 1_{A_1}+\dots+\mathbbm 1_{A_m})_\gamma< s_\gamma$. Corollary~\ref{cor:Greene}(b) implies that $\mu^t_1+\dots+\mu^t_m=H$ (the same collection $A_1,\dots,A_m$ still achieves the maximum).
Since $|\mu|=|\la|-1$, we conclude that 
\[
\sum_{j>m} \mu^t_j = \sum_{j>m} \la^t_j -1 \le M-1.
\]
Hence, for any $w\ge 1$,
\[
\mu_1+\dots +\mu_w \le  mw + \sum_{j>m} \mu^t_j \le mw + M -1.
\]
The claim $\bs-\mathbbm 1_\gamma\in S_{m,M-1}$ now follows via yet another application of Corollary~\ref{cor:Greene}(a).
\end{proof}

\begin{cor}\label{cor:minksum}
The sets $S_{m,M}$ satisfy the Minkowski sum property $S_{m_1,M_1}+S_{m_2,M_2} = 
S_{m_1+m_2,M_1+M_2}$.
\end{cor}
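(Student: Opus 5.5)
The inclusion $S_{m_1,M_1}+S_{m_2,M_2}\subset S_{m_1+m_2,M_1+M_2}$ follows directly from the definition. The defining inequalities are additive in $(m,M)$: if $\bs\in S_{m_1,M_1}$ and $\bt\in S_{m_2,M_2}$, then for every subposet $P\subset P_d$
\[
\sum_{\al\in P}(s_\al+t_\al)\le (m_1+m_2)w(P)+M_1+M_2 .
\]
So all the work is in the reverse inclusion. For that I would first reduce to two elementary decompositions and then combine them by induction on $M_1+M_2$ and $m_1+m_2$.

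\textbf{Step 1.} Proposition~\ref{prop:M-1} already yields $S_{m,M}=S_{m,M-1}+S_{0,1}$ for $M\ge1$. Take $\bs\in S_{m,M}$. If $\bs\in S_{m,M-1}$, write $\bs=\bs+0$, using $0\in S_{0,1}$. Otherwise the proposition gives $\gamma$ with $\bs-\mathbbm 1_\gamma\in S_{m,M-1}$, and $\mathbbm 1_\gamma\in S_{0,1}$. Iterating $M$ times gives
\[
S_{m,M}=S_{m,0}+S_{0,1}+\dots+S_{0,1}\quad(M\text{ summands } S_{0,1}).
\]
It also gives $S_{0,M}=S_{0,1}+\dots+S_{0,1}$; here the case $m=0$ of the proposition is simply the observation that every nonzero $\bs$ has a positive entry $\gamma$, and removing it keeps $\bs-\mathbbm 1_\gamma$ in $S_{0,M-1}$.

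\textbf{Step 2.} I need the FFLV Minkowski property $S_{m_1,0}+S_{m_2,0}=S_{m_1+m_2,0}$, which is known from \cite{FFL11}. One can also reprove it in the same RSK spirit. For $\bs\in S_{m,0}$, Corollary~\ref{cor:Greene}(a) shows that the shape $\la$ satisfies $\la_1\le m$, so $\la$ has at most $m$ columns, and by Corollary~\ref{cor:Greene}(b) $\bs$ is a sum of $m$ indicator vectors of antichains. Grouping $m_1$ of these antichains into one part and $m_2$ into the other gives the decomposition, since a sum of $k$ antichain indicators has chain sums at most $k$.

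\textbf{Step 3.} Combining the two steps,
\[
S_{m_1+m_2,M_1+M_2}=S_{m_1,0}+S_{m_2,0}+(M_1+M_2)S_{0,1}
=\bigl(S_{m_1,0}+M_1S_{0,1}\bigr)+\bigl(S_{m_2,0}+M_2S_{0,1}\bigr),
\]
where $kS_{0,1}$ means the $k$-fold Minkowski sum. By Step~1 the two brackets equal $S_{m_1,M_1}$ and $S_{m_2,M_2}$. Some care is needed because the Minkowski sums are of sets and not polytopes, but every equality above is an equality of integer point sets, so the rearrangement is valid.

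The main obstacle is Step~2, the antichain decomposition. One must make sure that Corollary~\ref{cor:Greene}(b) really realizes the full sum of $\bs$ by $m$ antichains when $\la$ has at most $m$ columns. This holds because $\la_1^t+\dots+\la_m^t=|\la|=\sum s_\al$, so the maximizing antichains exhaust $\bs$ exactly. Alternatively, one can just cite \cite{FFL11}.
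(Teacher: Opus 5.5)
Your proof is correct and follows the paper's argument: iterate $S_{m,M}=S_{m,M-1}+S_{0,1}$ from Proposition~\ref{prop:M-1}, where you rightly also cover the case $\bs\in S_{m,M-1}$ that the proposition leaves out, and then combine this with the FFLV property $S_{m_1,0}+S_{m_2,0}=S_{m_1+m_2,0}$. Your optional RSK reproof of that FFLV property via Corollary~\ref{cor:Greene} is also valid, since $\la_1\le m$ forces $\la^t_1+\dots+\la^t_m=\sum_\al s_\al$, so the $m$ maximizing antichains sum exactly to $\bs$.
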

\begin{proof}
Proposition \ref{prop:M-1} implies that $S_{m,M}=S_{m,M-1}+S_{0,1}$. Now we iterate the last equality and then apply the property 
$S_{m_1,0}+S_{m_2,0} = 
S_{m_1+m_2,0}$ of FFLV polytopes \cite{FFL11}.
\end{proof}

Next, recall that the modules $V_{m,M}$ admit a natural $\mgl(V_d)\oplus\mgl(V_{n-d})$-action. 
Explicit decompositions 
of these modules are computed in the following corollary. We use the notation
\[
\Lambda_{m,M} = (m+M,m,\dots,m)=(m+M,m^{d-1}). 
\]
Recall that for partitions $\la$, $\Lambda$, we write $\la\le \Lambda$ if 
$\la_1+\dots+\la_i\le \Lambda_1+\dots+\Lambda_i$ for all $i$.

In the next corollary and in further sections we consider isomorphisms of $\msl(V_d)\oplus \mgl(V_{n-d})$-modules obtained as restrictions of $\mgl(V_d)\oplus \mgl(V_{n-d})$-modules. Such an isomorphism means that the $\mgl(V_d)\oplus \mgl(V_{n-d})$-modules differ by a one-dimensional tensor factor.

\begin{cor}
One has an isomorphism of $\msl(V_d)\oplus \mgl(V_{n-d})$-modules
\[
V_{m,M}\simeq \bigoplus_{\substack{\ell(\la)\le d\\ \la\le \Lambda_{m,M}}} 
\Sigma_{\la} V_d^*\T \Sigma_\la V_{n-d}.
\]
\end{cor}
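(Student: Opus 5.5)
We use the monomial basis result of \cite{Fe24} recalled above: the vectors $f^\bs v_{m,M}$, $\bs\in S_{m,M}$, form a basis of $V_{m,M}$. This gives $\dim V_{m,M}=|S_{m,M}|$, and, more precisely, a character identity once we keep track of torus weights. The monomial $f^\bs v_{m,M}$ is a weight vector for the Cartan subalgebra of $\mgl(V_d)\oplus\mgl(V_{n-d})$. Up to the fixed weight of $v_{m,M}$, its $\mgl(V_{n-d})$-weight is the vector of row sums of $\bs$. Its $\mgl(V_d)$-weight is minus the vector of column sums of $\bs$. The plan is to compare the character of $V_{m,M}$ with the character of the right-hand side using RSK.

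First, we fix the structure of $V_{m,M}$. It is a $\mgl(V_d)\oplus\mgl(V_{n-d})$-module, hence completely reducible, and it is determined by its character. Since $f^\bs v_{m,M}$, $\bs\in S_{m,M}$, is a weight basis, we have
\[
\ch V_{m,M}=\sum_{\bs\in S_{m,M}} x^{-\mathrm{col}(\bs)}y^{\mathrm{row}(\bs)},
\]
up to a one-dimensional prefactor; this prefactor is exactly why the statement is only for $\msl(V_d)\oplus\mgl(V_{n-d})$.

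Second, we apply Lemma \ref{lem:RSK}. RSK turns $\bs$ into a pair $(T,U)$ of the same shape $\la$. The content of $T$ is $\mathrm{col}(\bs)$ and the content of $U$ is $\mathrm{row}(\bs)$. The allowed shapes are exactly those with $\ell(\la)\le d$ and $\la_1+\dots+\la_w\le wm+M=\Lambda_{m,M,1}+\dots+\Lambda_{m,M,w}$ for all $w$, i.e.\ $\la\le\Lambda_{m,M}$. Note that $\ell(\la)\le d$ is automatic, since $T$ has entries in $[d]$. Summing over the pairs of each shape $\la$ gives
\[
\ch V_{m,M}=\sum_{\substack{\ell(\la)\le d\\ \la\le\Lambda_{m,M}}} s_\la(x^{-1})\,s_\la(y).
\]
Here we use that the generating function of semistandard tableaux of shape $\la$ on an alphabet is the Schur polynomial.

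Third, we identify $s_\la(x^{-1})s_\la(y)$ as the character of $\Sigma_\la V_d^*\T\Sigma_\la V_{n-d}$, and conclude that the two modules are isomorphic. Since $\ell(\la)\le d\le n-d$, the Schur functor $\Sigma_\la V_{n-d}$ is nonzero, so no terms drop out.

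The only delicate point is the bookkeeping of weights: we must check that the $\fa_d$-action by $E_{i,j}$ shifts the weight by $\veps_i-\veps_j$, so that it matches the RSK content convention of Subsection~\ref{sec:RSK} for rows versus columns. We must also track the overall shift by the weight of $v_{m,M}$, which is absorbed by the one-dimensional tensor factor. Everything else follows directly from the basis theorem and Lemma \ref{lem:RSK}.
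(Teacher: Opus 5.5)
Your proposal is correct and matches the paper's proof: both take the monomial basis $f^\bs v_{m,M}$, $\bs\in S_{m,M}$, from \cite{Fe24} and read off the $\mgl(V_d)\oplus\mgl(V_{n-d})$-weights from the column and row sums of $\bs$. Both then apply Lemma~\ref{lem:RSK} to rewrite the character as $\sum_{\la\le\Lambda_{m,M}} s_\la(x^{-1})s_\la(y)$, up to the monomial factor coming from the weight $((m+M)^d)$, $0$ of $v_{m,M}$.
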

\begin{proof}
Since $\la$ has at most $d$ parts and $d\le n-d$, both  
$\Sigma_\la V_d$ and $\Sigma_\la V_{n-d}$ are well defined. 
Hence, it is enough to prove the corresponding character identity.

Let us consider the  $\mgl(V_d)\oplus \mgl(V_{n-d})$-character. The weight of the highest weight
vector of $V_{m,M}$ is equal to $((m+M)^d)$ with respect to $\mgl(V_d)$ and to $0$ with respect 
to $\mgl(V_{n-d})$ (since the $\mgl_n$-weight of the highest weight vector of $V_{\om_d}$ is $1^d$).
Now consider $\bs\in S_{m,M}$ and the corresponding pair of tableaux $(T,U)$. 
The coordinates of the $\mgl(V_{n-d})$-weight of $f^\bs v_{m,M}$ are the row sums of $\bs$. Hence, this weight is equal to the content of $U$. A coordinate of the $\mgl(V_d)$-weight of $f^\bs v_{m,M}$ is equal to the difference of $m+M$ and the respective column sum of $\bs$. Hence, the $\mgl(V_d)$-weight of $f^\bs v_{m,M}$ 
is equal to $(m+M)^d$ minus the content of $T$. Summing over all $\bs\in S_{m,M}$ and
using Lemma \ref{lem:RSK}, we see that the characters coincide up to a monomial factor.
\end{proof}

\section{Polytopes}\label{sec:polytopes}
In this section we introduce a family of poset polytopes. In the following
sections we use the combinatorics of these polytopes to study cyclic tensor products
and orbit closures.

\subsection{Chain-type description}
For $\bm=(m_1,\dots,m_d)$, $m_r\in \bZ_{\ge 0}$, we introduce the partition 
\[
\Lambda_\bm = (m_1+\dots +m_d,m_2+\dots +m_d,\dots,m_d).
\]
For a poset $P$ of width $d$, we define the polytope 
$X_\bm(P)\subset\bR_{\ge 0}^{P}$ by the following inequalities
\begin{equation}\label{eq:Xm}
\sum_{\al\in P'} s_\al \le \sum_{i=1}^d \min(i,w(P')) m_i\quad\text{for all } P'\subset P.     
\end{equation}
We note that the right-hand side of this inequality is equal to the total length of the first $w(P')$ rows of $\Lambda_\bm$.  
We set $X_\bm=X_\bm(P_d)$ and denote the integer-point set of $X_\bm$ by $S_\bm$. 

\begin{example}\label{ex:chain}
Let $\bm=(0,\dots,0,m_d)$. Then $X_\bm$ is a scaled chain polytope  for $P_d$ and also an FFLV polytope.
If $\bm=(m_1,0,\dots,0,m_d)$, then
$X_\bm=X_{m,M}$ for $m=m_d$, $M=m_1$.
\end{example}

\begin{lem}\label{lem:RSK-gen}
The set $S_\bm$ is in bijection with the set of pairs $(T,U)$ 
of semistandard Young tableaux satisfying the following properties: 
\begin{itemize}
\item $T$ and $U$ are of the same shape $\la=(\la_1\ge\dots\ge \la_d\ge 0)$,
\item the entries of $T$ are in $[d]$, the entries of $U$ are in $[d+1,n]$,
\item $\la \le \Lambda_\bm$.
\end{itemize}
\end{lem}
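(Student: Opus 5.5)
The plan is to follow the proof of Lemma~\ref{lem:RSK} verbatim, replacing the bound $wm+M$ by the partial sums of $\Lambda_\bm$. I would restrict the RSK correspondence of Subsection~\ref{sec:RSK} (the bijection (i)$\to$(ii)) to $S_\bm\subset\bZ_{\ge0}^{P_d}$, viewing a point $\bs$ as an $(n-d)\times d$ matrix over $\bZ_{\ge 0}$. The entries of $T$ automatically lie in $[d]$ and those of $U$ in $[d+1,n]$. The shape $\la$ has at most $d$ rows, since a column of a semistandard tableau with entries in $[d]$ is strictly increasing and so has length at most $d$. It therefore only remains to check that $\bs\in X_\bm$ if and only if $\la\le\Lambda_\bm$.

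The key step is to rewrite the defining inequalities \eqref{eq:Xm}. As noted after \eqref{eq:Xm}, the right-hand side for a subposet $P'$ equals $\Lambda_1+\dots+\Lambda_{w(P')}$, where $\Lambda=\Lambda_\bm$. The right-hand side depends on $P'$ only through $w=w(P')\in[0,d]$ and is nondecreasing in $w$. Hence the system \eqref{eq:Xm} is equivalent to
\[
\max_{\substack{P'\subset P_d\\ w(P')\le w}}\ \sum_{\al\in P'} s_\al\ \le\ \Lambda_1+\dots+\Lambda_w \quad\text{for } w=1,\dots,d.
\]
One direction is immediate, since a subposet of width $w'\le w$ satisfies its own inequality, whose bound $\Lambda_1+\dots+\Lambda_{w'}$ is at most the bound for $w$. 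The other direction is trivial, because each subposet $P'$ occurs in the maximum for $w=w(P')$. By Corollary~\ref{cor:Greene}(a), the left-hand side equals $\la_1+\dots+\la_w$. So the system is exactly $\la_1+\dots+\la_w\le\Lambda_1+\dots+\Lambda_w$ for $w\le d$.

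Since $\ell(\la)\le d$ and $\ell(\Lambda)\le d$, the partial-sum inequalities for $w>d$ follow from the one for $w=d$. This gives precisely $\la\le\Lambda_\bm$. Because RSK is a bijection on all nonnegative integer matrices, its restriction to $S_\bm$ is a bijection onto the described set of pairs $(T,U)$.

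I do not expect a genuine obstacle. The only point needing care is the monotonicity argument that lets one pass from inequalities indexed by subposets to the maximum over subposets of width at most $w$. This is where the specific form $\sum_i\min(i,w)m_i$ of the right-hand side, being nondecreasing in $w$, is used. The rest is a direct transcription of Lemma~\ref{lem:RSK}.
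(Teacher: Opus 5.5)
Your proposal is correct and is the same argument as the paper's: restrict RSK (i)$\to$(ii) to $S_\bm$ and use Corollary~\ref{cor:Greene}(a) to turn the inequalities \eqref{eq:Xm} into $\la\le\Lambda_\bm$. The paper states this in one line; you supply the routine details (the monotonicity in $w$ of the right-hand side, and $\ell(\la)\le d$) that the paper leaves implicit.
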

\begin{proof}
Given $\bs\in S_\bm$, we apply the RSK correspondence to $\bs$ (viewed as a matrix in the set (i), see Subsection~\ref{sec:RSK}) and apply Corollary~\ref{cor:Greene}(a).     
\end{proof}

\begin{cor}\label{cor:cardS}
The cardinality of $S_\bm$ is equal to the sum over $\la\le \Lambda_\bm$ of the products
$\dim \Sigma_\la V_d^*\cdot \dim \Sigma_\la V_{n-d}$.
\end{cor}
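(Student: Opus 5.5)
The plan is to deduce the count directly from Lemma~\ref{lem:RSK-gen}, which identifies $S_\bm$ with the set of pairs $(T,U)$ of semistandard tableaux of a common shape $\la\le\Lambda_\bm$ with $\ell(\la)\le d$, where $T$ has entries in $[d]$ and $U$ has entries in $[d+1,n]$. Summing over shapes, one gets
\[
|S_\bm| = \sum_{\substack{\la\le\Lambda_\bm\\ \ell(\la)\le d}} \#\{T\}\cdot\#\{U\}.
\]
Note that $\Lambda_\bm$ has at most $d$ parts. A tableau on $[d]$ of shape $\la$ can exist only if $\ell(\la)\le d$, so the condition $\ell(\la)\le d$ is automatic. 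This lets us write the sum simply over $\la\le\Lambda_\bm$, as in the statement, with the understanding that terms with $\ell(\la)>d$ vanish. Those terms vanish both on the tableau side and on the representation side, since $\Sigma_\la V_d=0$ when $\ell(\la)>d$.

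Next I will invoke the classical fact that the number of semistandard Young tableaux of shape $\la$ with entries in an alphabet of size $k$ equals $\dim\Sigma_\la\bC^k$. This holds because the Schur polynomial $s_\la(x_1,\dots,x_k)$ is the character of $\Sigma_\la\bC^k$ and is the generating function of such tableaux; specializing all $x_i=1$ gives the claim. Applying this with the alphabet $[d]$ gives $\#\{T\}=\dim\Sigma_\la V_d=\dim\Sigma_\la V_d^*$, since dualizing preserves dimension. Applying it with the alphabet $[d+1,n]$, of size $n-d$, gives $\#\{U\}=\dim\Sigma_\la V_{n-d}$. Substituting these two counts into the displayed sum yields the corollary.

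I do not expect a genuine obstacle here. The only points requiring care are the bookkeeping around $\ell(\la)\le d$ and the harmless passage from $V_d$ to $V_d^*$.
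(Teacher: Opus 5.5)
Your proof is correct and matches the paper's argument: the corollary is read off from Lemma~\ref{lem:RSK-gen} by counting, for each shape $\la\le\Lambda_\bm$, the semistandard tableaux on $[d]$ and on $[d+1,n]$ as $\dim\Sigma_\la V_d^*$ and $\dim\Sigma_\la V_{n-d}$. Your remark that shapes with $\ell(\la)>d$ contribute zero on both sides is the right way to reconcile the statement (which omits this condition) with the lemma; note that weak dominance alone does not force $\ell(\la)\le d$, so this vanishing argument, rather than the condition being automatic, is what does the work.
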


\begin{example}
By Corollary~\ref{cor:minksum}, the sets $S_\bm$ are Minkowski-additive with respect to $\bm$ whenever the only
nonzero entries are $m_1$ and $m_d$. We now give an example showing that an analogous Minkowski sum property does not hold for general $\bm$. 

Let $n=12$, $d=6$,
$\bm=(0,1,0,0,0,1)$. We show that $S_\bm \ne S_{\bm'}+S_{\bm''}$ for
$\bm'=(0,0,0,0,0,1)$, $\bm''=(0,1,0,0,0,0)$.
The following collection $\bs$ belongs to $S_\bm$:
\[
s_{11,1}=s_{10,2}=s_{7,3} = s_{12,4} = s_{9,5} = s_{8,6} = 1, \ s_{i,j}=0 
\text{ otherwise}.
\]
We visualize this collection as follows (the columns of the matrix are labeled by
$1,\dots,6$ and the rows by $7,\dots,12$):
\[
\begin{tikzpicture}
  \draw[step=0.5cm] (0,0) grid (3,3);
  \foreach \p in {
    (0.25,0.75), (0.75,1.25), (1.25,2.75), (1.75,0.25), (2.25,1.75), (2.75,2.25) 
  } 
  {\fill \p circle (2.5pt);  }
\end{tikzpicture}
\]
One easily sees that the sum over any chain is at most two, the sum over 
any width-two set is at most four, and the sum over any width-three set is at most five.
This confirms that $\bs\in S_\bm$. 

The set $S_{\bm'}$ is the set of integer points in the chain polytope (i.e., 
consists of indicator vectors of antichains) and
$S_{\bm''}$ is its subset consisting of points with total sum of coordinates
not exceeding two. One easily sees that the support of $\bs$ cannot be expressed 
as a union of two antichains such that the cardinality of one of them is
at most two.
\end{example}

\subsection{GT-type polytopes}\label{sec:GT}
As above, we fix a collection of nonnegative integers 
$\bm=(m_1,\dots,m_d)\in\bZ_{\ge 0}^d$ and the corresponding
partition $\Lambda_\bm$. 
Let $M=m_1+\dots +m_d$ be  the length of the largest part of $\Lambda_\bm$.
We consider the set $SSYT_{d\times M}$ of rectangular semistandard Young tableaux of height $d$ and width $M$
on the alphabet $[n]$.
In particular, such tableaux parametrize a basis in the dual module $V_{M\om_d}^*$,
the degree $M$ component of the homogeneous coordinate ring of the Grassmannian $\Gr_d(V)$.

Consider $T\in SSYT_{d\times M}$. We partition $T$ into a disjoint union of two skew tableaux
$T^1$ and $T^2$, where $T^1$ consists of cells in $T$ with entries in $[d]$ and $T^2$
consists of cells with entries in $[d+1,n]$. Clearly, $T^1$ is a semistandard tableau.
Let $\mu=(\mu_1\ge\dots\ge \mu_d)$ be the shape of $T^1$.
Our goal now is to construct two semistandard tableaux $\bar T^1$, $\bar T^2$ 
of shape $\nu=(M-\mu_d,M-\mu_{d-1},\dots, M-\mu_1)$; the entries of $\bar T^1$ will be in
$[d]$, the entries of $\bar T^2$ will be in $[d+1,n]$.

Our first step is to transform $T^2$ into $\bar T^2$. We define $\bar T^2$ as a tableau of shape $\nu$ given by
\[
\bar T^2_{i,j} = n+d+1 - T_{d+1-i,M+1-j}.
\]
In words, we first rotate $T^2$ by $180$ degrees and then reflect the entries with respect
to the middle of the segment $[d+1,n]$. One easily sees that $\bar T^2$ is a semistandard 
tableau of shape $\nu$ whose entries are in $[d+1,n]$.

Now we transform $T^1$ into $\bar T^1$, defined as follows. The first (leftmost) column
of $\bar T^1$ consists of elements of $[d]$ that are not present in the $M$-th (rightmost)
column of $T^1$; the elements are placed in increasing order from top to bottom. 
The second column of $\bar T^1$ consists of elements of $[d]$ that are not 
present in the $(M-1)$-st  column of $T^1$ (placed in increasing order), and so on. It is easily checked that $\bar T^1$ is also semistandard.

Thus, we have constructed a map $\Psi$ from $SSYT_{d\times M}$ to the set of those pairs of semistandard tableaux in (ii) (see Subsection~\ref{sec:RSK}) whose common shape $\la$ satisfies $\la_1\le M$. Inverting the construction of $\bar T_1$ and $\bar T_2$, one sees that $\Psi$ is a bijection.
We next consider the preimage of those pairs of tableaux whose 
shape is dominated by $\La_\bm$. 

\begin{lem}\label{lem:Psidominated}
For $T\in SSYT_{d\times M}$, the common shape of the tableaux 
in $\Psi(T)$ is 
dominated by $\La_\bm$ if and only if the following conditions hold for all $a\in[1,d-1]$:
\begin{equation}\label{eq:SSYTbm}
\#\{(i,j):\ i\ge d-a,\ T_{i,j}\le d \} \ge am_1 + (a-1)m_2 + \dots + m_a.
\end{equation}
\end{lem}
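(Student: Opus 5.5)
The proof is a direct translation of the dominance condition $\la\le\La_\bm$ through the definition of $\Psi$. Let $\mu$ be the shape of $T^1$. By construction, the common shape of $\bar T^1$ and $\bar T^2$ is $\la=\nu=(M-\mu_d,\dots,M-\mu_1)$. Dominance asks that $\la_1+\dots+\la_k\le (\La_\bm)_1+\dots+(\La_\bm)_k$ for all $k\in[1,d]$.

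The first step is to rewrite the left-hand side in terms of $\mu$:
\[
\la_1+\dots+\la_k = kM-(\mu_d+\mu_{d-1}+\dots+\mu_{d-k+1}).
\]
Here $\mu_{d-k+1}+\dots+\mu_d$ counts the cells of $T^1$ lying in rows $d-k+1,\dots,d$. Since $T^1$ is precisely the set of cells of $T$ with entries $\le d$, this equals $\#\{(i,j):\ i\ge d-k+1,\ T_{i,j}\le d\}$.

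The second step is to rewrite the right-hand side. We have $(\La_\bm)_l=\sum_{i\ge l}m_i$, so
\[
\sum_{l=1}^k(\La_\bm)_l=\sum_i \min(i,k)m_i = kM-\sum_{i<k}(k-i)m_i,
\]
using $M=\sum_i m_i$. The dominance inequality for index $k$ is therefore equivalent to
\[
\#\{(i,j):\ i\ge d-k+1,\ T_{i,j}\le d\}\ \ge\ (k-1)m_1+(k-2)m_2+\dots+m_{k-1}.
\]
Setting $a=k-1$ turns this into exactly \eqref{eq:SSYTbm}, with the row condition $i\ge d-a+\dots$ matching after the index shift.

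The last step is to check the ranges of the index. For $k=1$, the right-hand side is $0$ and the condition is vacuous. For $k=d$, that is $a=d-1$, the condition is part of the list in \eqref{eq:SSYTbm}. The remaining point is to reconcile the indexing convention in the displayed condition, namely $i\ge d-a$ versus $i\ge d-a+1$, and the exact range of $a$. I expect this bookkeeping of index shifts to be the only real obstacle. If the statement's convention is $i\ge d-a$ with coefficients $am_1+\dots+m_a$, this corresponds to $k=a+1$ rows counted from the bottom, namely rows $d-a,\dots,d$, with coefficients $\sum_{i<k}(k-i)m_i$. That matches exactly, and as $a$ runs over $[1,d-1]$, $k$ runs over $[2,d]$, which together with the vacuous case $k=1$ covers every dominance inequality.
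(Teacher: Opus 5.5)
Your proposal is correct and follows essentially the same route as the paper. Both rewrite $\nu=(M-\mu_d,\dots,M-\mu_1)\le\Lambda_\bm$, using $M=\sum_r m_r$, as $\mu_d+\dots+\mu_{d-a}\ge am_1+(a-1)m_2+\dots+m_a$, and then read $\mu_i$ as the number of entries $\le d$ in row $i$ of $T$. Your explicit partial-sum computation, and the observation that $k=1$ is vacuous, only spell out what the paper leaves implicit. The closing hedging about $i\ge d-a$ versus $i\ge d-a+1$ is unnecessary: the statement uses $i\ge d-a$, which is exactly the $k=a+1$ bottom rows you computed.
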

\begin{proof}
Recall the decomposition of $T$ into two tableaux $T^1$ and $T^2$. The shape of $T^1$
is denoted by $\mu$. We are interested in the condition 
\[
\nu = (M-\mu_d,M-\mu_{d-1},\dots,M-\mu_1)\le \Lambda_\bm = (m_1+\dots + m_d,\dots, m_d).
\]
Since $M=\sum_{r=1}^d m_r$, these inequalities are equivalent to 
\[
\mu_d + \dots + \mu_{d-a} \ge am_1 + (a-1)m_2 + \dots + m_a,\ a\in [d-1].
\]
To complete the proof, we note that, by definition, $\mu_i$ is the number of entries in the $i$-th row of $T$ that do not exceed $d$.  
\end{proof}

Now let us pass from semistandard tableaux to Gelfand--Tsetlin (GT) patterns. We use the standard  
bijection: given a tableau $T\in SSYT_{d\times M}$, we attach to it a collection 
of numbers $\kappa^s_i$ defined by the formula
\begin{equation}\label{eq:kappaT}
\kappa^s_i = \#\{j\in [M]:\ T_{i,j}\le s\},\ s\in [n], i\in [s].
\end{equation}
Clearly, one has $\kappa^{s+1}_i\ge \kappa^s_i\ge \kappa^{s+1}_{i+1}$, which, by definition, means that the values $\kappa^s_i$ form a GT pattern. 
We denote the set of all GT patterns obtained in this way by $GT_M$ and the bijection from $SSYT_{d\times M}$ to $GT_M$ by $\Phi$. 
% We make the following straightforward observations.

If $i>d$, then $\kappa^s_i=0$. If $i\in [d]$, $s\ge n-d+i$, then  $\kappa^s_i=M$.
Hence, the non-fixed entries are $\kappa^s_i$ with $i\in [d]$, $i\le s < n-d+i$, of which there are $d(n-d)$. In particular, $(\kappa^n_1,\dots,\kappa^n_n)=(M^d0^{n-d})$ and $GT_M$ is precisely the set of GT patterns corresponding to the $\mgl(V)$-weight $(M^d0^{n-d})$.

Consider the poset $\tilde P_d$ consisting of pairs $(i,s)$, $i\in [d]$, $i\le s < n-d+i$ with covering
relations $(i,s+1)\ge (i,s)\ge (i+1,s+1)$.
The set $GT_M$ is naturally the integer-point set of the $M$-fold scaling of the order polytope of $\tilde P_d$. We denote this scaled order polytope by $Y_M$; this is also the GT polytope of the weight $(M^d0^{n-d})$. Note that $\tilde P_d\simeq P_d$, which is yet another way of seeing that $|GT_M|=|S_{(0,\dots,0,M)}|$, the latter consisting of integer points in the scaled chain polytope of $P_d$ (see Example \ref{ex:chain}).

\begin{prop}
For $T\in SSYT_{d\times M}$, the common shape of the tableaux in $\Psi(T)$ is dominated by $\Lambda_\bm$ if
and only if the entries $\kappa^s_i$ of $\Phi(T)$ satisfy the following inequalities
for all $a\in [d-1]$:
\begin{equation}\label{eq:GTineq}
\kappa^d_d + \kappa^d_{d-1} + \dots +\kappa^d_{d-a} \ge am_1 + (a-1) m_2 + \dots + m_a. 
\end{equation}
\end{prop}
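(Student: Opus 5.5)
The plan is to reduce the statement directly to Lemma~\ref{lem:Psidominated}. That lemma already characterizes domination of the shape of $\Psi(T)$ by $\Lambda_\bm$ through the inequalities~\eqref{eq:SSYTbm}. It therefore suffices to show that, for each $a\in[d-1]$, the left-hand side of~\eqref{eq:SSYTbm} equals the left-hand side of~\eqref{eq:GTineq}.

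The key step is to identify $\kappa^d_i$ via the definition~\eqref{eq:kappaT}. By that definition, $\kappa^d_i=\#\{j\in[M]:\ T_{i,j}\le d\}$ for $i\in[d]$. This is exactly the number of cells in row $i$ of $T$ with entries at most $d$, i.e.\ the part $\mu_i$ of the shape of $T^1$.

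Summing over $i=d-a,\dots,d$ gives
\[
\kappa^d_d+\kappa^d_{d-1}+\dots+\kappa^d_{d-a}=\#\{(i,j):\ i\ge d-a,\ T_{i,j}\le d\}.
\]
Here rows are indexed by $[d]$, so the condition $i\ge d-a$ ranges over rows $d-a,\dots,d$. This is precisely the count appearing in~\eqref{eq:SSYTbm}, and it is the quantity $\mu_d+\dots+\mu_{d-a}$ from the proof of Lemma~\ref{lem:Psidominated}. Hence~\eqref{eq:SSYTbm} and~\eqref{eq:GTineq} coincide termwise for every $a$, and the proposition follows from Lemma~\ref{lem:Psidominated}.

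There is no real obstacle. The only point needing care is index bookkeeping: the range $i\ge d-a$ covers $a+1$ rows, matching the $a+1$ summands $\kappa^d_{d-a},\dots,\kappa^d_d$ in~\eqref{eq:GTineq}. One should also note that this agrees with the passage from $\nu\le\Lambda_\bm$ in the proof of Lemma~\ref{lem:Psidominated}, so that no off-by-one shift occurs between the two formulations.
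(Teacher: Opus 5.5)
Your proposal is correct and is essentially the paper's own proof. It reduces to Lemma~\ref{lem:Psidominated} and uses \eqref{eq:kappaT} to identify $\kappa^d_i$ with $\mu_i$, the number of entries at most $d$ in row $i$, so the left-hand sides of \eqref{eq:SSYTbm} and \eqref{eq:GTineq} coincide for every $a\in[d-1]$.
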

\begin{proof}
Recall Lemma~\ref{lem:Psidominated}. By  \eqref{eq:kappaT}, the left-hand side
of \eqref{eq:SSYTbm} is equal to the left-hand side of \eqref{eq:GTineq}. This
gives the desired claim.
\end{proof}

\begin{rem}
A standard way to visualize a GT pattern is as a triangle with $\kappa^n_1,\dots,\kappa^n_n$ forming the first horizontal row, $\kappa^{n-1}_1,\dots,\kappa^{n-1}_{n-1}$ forming the second horizontal row, and so on. Thus, the above inequalities only affect entries in a single row of the GT pattern $\Phi(T)$.
\end{rem}

We denote by $Y_\bm$ the subpolytope of $Y_M$ cut out by the inequalities~\eqref{eq:GTineq} and by $GT_\bm\subset GT_M$ its integer-point set. To summarize, we have obtained the following chain of bijections between the integer-point sets of $X_\bm$ (a subpolytope of the scaled chain polytope or FFLV polytope) and $Y_\bm$ (a subpolytope of the scaled order polytope or GT polytope):
\begin{gather*}
S_\bm, \text{ the integer-point set of } X_\bm\subset \bR^{P_d}\ \eqref{eq:Xm}\\
\downarrow \text{RSK}\\
\text{ pairs of semistandard tableaux on alphabets } [d], [d+1,n] \text{ of shape } 
\la\le \La_\bm\\
\downarrow \Psi^{-1}\\
\text{ semistandard tableaux of shape } d\times M \text{ satisfying } \eqref{eq:SSYTbm}\\
\downarrow \Phi\quad\\
GT_\bm, \text{ the integer-point set of } Y_\bm\subset\bR^{\tilde P_d}.
\end{gather*}

\subsection{Examples and remarks}
\begin{example}\label{ex:n=4d=2}
Let $n=4$, $d=2$, $\bm=(m_1,m_2)$, $M=m_1+m_2$. Then the polytopes $X_\bm$ and $Y_\bm$ are defined by the inequalities:
\begin{gather*}
(s_{12},s_{22},s_{23},s_{13})\in\bR_{\ge 0}^4:\\
s_{12}+s_{22}+s_{23}\le M,\ s_{12}+s_{13}+s_{23}\le M,\\ s_{12}+s_{22}+s_{23} + s_{13}\le m_1+2m_2
\end{gather*}    
and
\begin{gather*}
(\kappa^1_1,\kappa^2_1,\kappa^2_2,\kappa^3_2)\in\bR_{\ge 0}^4:\\
M\ge \kappa^2_1\ge \kappa^1_1\ge \kappa^2_2,\ \kappa^2_1\ge \kappa^3_2\ge \kappa^2_2,\
\kappa^2_1 + \kappa^2_2 \ge m_1.
\end{gather*} 
The constructed bijection between $S_\bm$ and $GT_\bm$ can be written as the following piecewise linear map:
\begin{gather*}
s_{12} = \kappa^2_1 - \max(\kappa^1_1,\kappa^3_2),\quad
s_{23} = \min(\kappa^1_1,\kappa^3_2)-\kappa^2_2,\\
s_{22} = M - \kappa^2_1 + \max(\kappa^3_2-\kappa^1_1,0),\quad
s_{13} = M - \kappa^2_1 + \max(\kappa^1_1-\kappa^3_2,0).
\end{gather*}
The inverse bijection is given by
\begin{gather*}
\kappa^2_1 = M - \min(s_{22},s_{13}),\quad 
\kappa^1_1 = \kappa^2_1 - s_{12} - \max(s_{13}-s_{22},0),\\
\kappa^3_2 = \kappa^2_1 - s_{12} - \max(s_{22}-s_{13},0),\quad
\kappa^2_2 = \kappa^2_1 - s_{12} - |s_{22}-s_{13}| - s_{23}.
\end{gather*}
In fact, these formulas define a piecewise linear bijection between the polytopes $X_\bm$ and $Y_\bm$ and one may similarly find such bijections for general~$\bm$.
\end{example}

\begin{rem}
Let us describe the composite correspondence between $S_\bm$ and
$GT_\bm$ when $M=1$, i.e., $\bm$ has a single nonzero entry $m_r=1$.
In this case, $\bs\in S_\bm$ is the indicator vector of an antichain $(i_1,j_1),\dots,(i_s,j_s)$ with
$s\le r$, $i_1<\dots <i_s$, $j_1>\dots >j_s$. The two corresponding tableaux under RSK are the columns
$(j_s,\dots,j_1)$ and $(i_1,\dots,i_s)$. The respective $d\times M$ semistandard tableau $T$ is a height-$d$ column obtained as follows. Its upper $d-s$ entries form the
set $[d]\setminus \{j_1,\dots,j_s\}$; its lower $s$ entries are 
$d+n+1-i_s,\dots,d+n+1-i_1$. The entry $\kappa^s_i$ of the corresponding GT pattern $\Phi(T)$ is equal to 1 if the $i$-th entry of the above height-$d$ column 
is at most $s$ and is zero otherwise.
\end{rem}

\begin{rem}
For $\bm=(0,\dots,0,M)$ our composite map provides a bijection between the integer-point sets of the scaled chain and order polytopes of $P_d$. This is, in general, \textbf{not} the standard Stanley bijection~\cite{Sta86}, as seen already in Example \ref{ex:n=4d=2}.
Instead, we obtain a bijection that is similar to the one appearing in, e.g.,~\cite[\S 3]{NY04}.
\end{rem}

\section{Representations}\label{sec:rep}
We define the $\fp^-_d$-module $V_\bm$ as follows:
\[
V_\bm = \bigodot_{r=1}^d (V^r_{\om_d})^{\odot m_r}.
\]
We denote the cyclic vector of $V_\bm$ by $v_\bm$.
\begin{example}
If $\bm=(0,\dots,0,m_d)$, then $V_\bm\simeq V_{m_d\om_d}$ (as usual, we assume $d\le n-d$).
If $\bm=(m_1,0,\dots,0)$, then $V_\bm\simeq \bigoplus_{i=0}^{m_1} S^i(\fa_d)$.
If $\bm=(m_1,0,\dots,0,m_d)$, then $V_\bm\simeq V_{m,M}$ with $m=m_d$, $M=m_1$.
\end{example}

We would like to prove that the vectors $f^\bs v_\bm$, $\bs\in S_\bm$ form a basis of $V_\bm$. 
Here $f^\bs = \prod_{\al\in P_d} f_\al^{s_\al}=\prod_{i,j} E_{i,j}^{s_{i,j}}$,
where $(i,j)\in P_d$, i.e., $i\in [d+1,n]$, $j\in [d]$.
We first show that they span the whole space.

\begin{lem}\label{lem:Nw}
For $w\in[d]$, let $N_w=\sum_{i=1}^d \min(i,w) m_i$. Then for any
$\beta_1,\dots,\beta_w\in P_d$ one has 
\[
f_{\beta_1}^{b_1}\dots f_{\beta_w}^{b_w}v_\bm =0\quad \text{ if }\quad b_1+\dots+b_w > N_w. 
\]
\end{lem}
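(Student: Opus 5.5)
Since $V_\bm$ sits inside the tensor product $\bigotimes_{r}(V^r_{\om_d})^{\T m_r}$, with $v_\bm$ the tensor product of the cyclic vectors, we reduce the statement to a single tensor factor. Because $\fa_d$ is abelian and acts on the tensor product by derivations, Leibniz gives
\[
f_{\beta_1}^{b_1}\cdots f_{\beta_w}^{b_w} v_\bm=\sum \prod_{k}\binom{b_k}{b_k^{(1)},\dots,b_k^{(N)}}\ \bigotimes_{t=1}^{N} f_{\beta_1}^{b_1^{(t)}}\cdots f_{\beta_w}^{b_w^{(t)}} v^{r_t}_{\om_d}.
\]
Here $N=\sum m_r$ is the number of factors, the $t$-th factor is $V^{r_t}_{\om_d}$, and the sum runs over distributions $b_k=\sum_t b_k^{(t)}$. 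It therefore suffices to prove the one-factor claim: in $V^r_{\om_d}$, the vector $f_{\beta_1}^{c_1}\cdots f_{\beta_w}^{c_w}v^r_{\om_d}$ vanishes whenever $c_1+\dots+c_w>\min(r,w)$. Granting this, every nonzero summand has $\sum_k b_k^{(t)}\le\min(r_t,w)$ for each $t$. Summing over $t$ gives $b_1+\dots+b_w\le\sum_r m_r\min(r,w)=N_w$. Hence if $b_1+\dots+b_w>N_w$, every summand vanishes.

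For the one-factor claim, recall that $V^r_{\om_d}$ is spanned by the classes of $v_I$ with $|I_{>d}|\le r$. Each $E_{i,j}\in\fa_d$ sends $v_I$ to $\pm v_{I\setminus\{j\}\cup\{i\}}$ or to $0$, raising the degree by exactly one. So any product of $c$ elements of $\fa_d$ applied to $v_{\om_d}$ lands in degree $c$, and this vanishes in $V^r_{\om_d}$ when $c>r$. This handles the bound by $r$.

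For the bound by $w$, observe that $E_{i,j}$ kills $v_I$ unless $j\in I$. Starting from $v_1\wedge\dots\wedge v_d$, the first application of $E_{i,j}$ removes $j$ from the index set. Any later application of an element with the same column index $j$ then gives zero; note $j\le d<i$, so $j$ is never reintroduced. Similarly, a second application with the same row index $i$ fails because $v_i$ would appear twice. Thus a product involving the same root $\beta_k$ twice already vanishes in $\bigwedge^d V$. Consequently a nonzero monomial in the $f_{\beta_k}$ uses each $\beta_k$ at most once, so its total degree is at most $w$.

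The key point to check is this nilpotency $f_\beta^2 v_{\om_d}=0$ in $\bigwedge^dV$, i.e.\ $f_\beta^2=0$ on $\bigwedge^dV$, together with the observation that the one-factor bound is $\min(r,w)$ rather than $\min(r,w)$ refined by the poset structure of the $\beta$'s. No width condition on $\{\beta_k\}$ is needed, since at most $w$ distinct roots appear. The remaining care is purely bookkeeping: checking that passing to the quotient $V^r_{\om_d}$ and to the cyclic submodule $V_\bm$ of the tensor product is compatible with the Leibniz expansion. This holds because $V_\bm$ is an $\fa_d$-submodule containing $v_\bm$.
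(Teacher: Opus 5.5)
Your proof is correct and uses the same two ingredients as the paper: the degree bound $r$ in $V^r_{\om_d}$ and the nilpotency $f_\beta^2 v^r_{\om_d}=0$, combined through the Leibniz distribution over the tensor factors of $v_\bm$. The only difference is bookkeeping. The paper handles the factors with $r<w$ by the degree bound, and the factors with $r\ge w$ by a pigeonhole argument producing some exponent $c_i>m_w+\dots+m_d$. You instead fold both facts into the uniform per-factor bound $\min(r,w)$ and sum over the factors.
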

\begin{proof}
By definition, 
\[
N_w = m_1+2m_2+\dots + (w-1)m_{w-1} + w(m_w+\dots +m_d).
\]
Recall the cyclic vectors $v^r_{\om_d}\in V^r_{\om_d}$, $r\in [d]$. The cyclic vector $v_\bm$ 
is explicitly written as 
$v_\bm=\bigotimes_{r=1}^d (v^r_{\om_d})^{\T m_r}$. Hence, to compute
$f_{\beta_1}^{b_1}\dots f_{\beta_w}^{b_w}v_\bm$ one distributes the factors $f_{\beta_i}$ among the vectors $v^r_{\om_d}$ appearing in $v_\bm$ in all possible ways. 

First, note that applying more than $m_1+2m_2+\dots + (w-1)m_{w-1}$ factors to
$\bigotimes_{r=1}^{w-1} (v^r_{\om_d})^{\otimes m_r}$ produces zero, since the largest degree of a vector 
in $V^r_{\om_d}$ is equal to $r$. Hence, it suffices to prove that 
\[
f_{\beta_1}^{c_1}\dots f_{\beta_w}^{c_w} \bigotimes_{r=w}^d (v^r_{\om_d})^{\otimes m_r} = 0
\]
for $c_1+\dots +c_w > w (m_w+\dots + m_d)$. 
This inequality implies that there exists an $i\in [w]$ such that $c_i>m_w+\dots + m_d$.
Since $f^2_\beta v^r_{\om_d}=0$ for all $r$ and $\beta$, we conclude that 
$f_{\beta_i}^{c_i} \bigotimes_{r=w}^d (v^r_{\om_d})^{\otimes m_r} = 0$ for $c_i>m_w+\dots + m_d$.
\end{proof}

\begin{cor}\label{cor:span}
Vectors $f^\bs v_\bm$, $\bs\in S_\bm$, span $V_\bm$.    
\end{cor}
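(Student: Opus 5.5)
We will deduce the spanning property from Lemma~\ref{lem:Nw} by a straightening argument in the standard FFLV style. Since $\fa_d$ is abelian, $V_\bm=\U(\fa_d)v_\bm=S(\fa_d)v_\bm$, so $V_\bm$ is spanned by all monomials $f^\bs v_\bm$ with $\bs\in\bZ_{\ge0}^{P_d}$. It therefore suffices to show that every $f^\bs v_\bm$ with $\bs\notin S_\bm$ is a linear combination of vectors $f^{\bt}v_\bm$ with $\bt$ strictly smaller than $\bs$ in a suitable total order on monomials of the same total degree. The argument then finishes by induction on this order within each graded component $S^k(\fa_d)v_\bm$, which is finite.

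To get such relations, we fix $\bs\notin S_\bm$. Then there is a subposet $P'\subset P_d$ of width $w\le d$ with $\sum_{\al\in P'}s_\al>N_w$. By Dilworth's theorem, $P'$ is a union of $w$ chains $C_1,\dots,C_w$. We use the operators $\partial_{\al}$ (the right action of $\fp_d$, or rather the $\mgl(V_d)\oplus\mgl(V_{n-d})$ part, which acts on $\fa_d$ by shifting row or column indices) as in \cite{FFL11}. In that paper, for a single chain with $\sum s_\al>m$, one starts from $f_{\beta}^{b}v=0$ with $\beta$ the maximal element of the chain (Lemma~\ref{lem:Nw} with $w=1$). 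One then applies a composite of lowering operators $\partial$, which annihilate $v_\bm$ up to scalar since $v_\bm$ is a highest-weight-type vector for the Levi part. This produces the relation in which $f^{\bs|_{C}}$ appears as the leading monomial with respect to a homogeneous lexicographic-type order. Here we do the same simultaneously for $w$ chains. We start from the vanishing $f_{\beta_1}^{b_1}\cdots f_{\beta_w}^{b_w}v_\bm=0$ of Lemma~\ref{lem:Nw}, where $\beta_k$ is the top of chain $C_k$ and $b_k=\sum_{\al\in C_k}s_\al$. We then apply Levi operators to redistribute exponents along each chain. The resulting relation expresses $f^{\bs|_{P'}}v_\bm$ through monomials that are smaller in the order. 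Multiplying by $f^{\bs|_{P_d\setminus P'}}$ gives the required straightening relation for $f^\bs v_\bm$.

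The main obstacle is to check that the leading term is really $f^{\bs|_{P'}}$ with nonzero coefficient when several chains interact, since the Levi operators used for different chains need not commute in their effect. We would first try to choose the chains $C_k$ to be ``noncrossing''; this should be possible by uncrossing, taking a minimal family in the product order. We would then order the operators chain by chain, following the FFLV ordering argument. An alternative fallback that avoids straightening is a dimension count. One shows $\dim V_\bm\ge|S_\bm|$ via the surjection $V_\bm^*\to$ (a component of the coordinate ring) and Corollary~\ref{cor:cardS}, and compares this with a spanning set. That route, however, would still need a spanning statement, so the straightening above is the core step.
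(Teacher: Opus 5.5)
Your plan follows the paper's own argument. You take a violating subposet $P'$, split it into $w(P')$ chains, and start from the vanishing $\prod_k f_{\beta_k}^{b_k}v_\bm=0$ of Lemma~\ref{lem:Nw}; then you apply a product of Levi/$\fb$-operators, which preserve the annihilator of $v_\bm$, to obtain $f^\bs$ plus smaller PBW monomials, and induct on the monomial order.

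The multi-chain leading-term check you flag as the main obstacle is also left undone in the paper's text, which imports it from \cite[Proposition 2.18]{Fe24}. The polytopes there are likewise cut out by inequalities indexed by arbitrary subposets, so that straightening already handles several chains at once, and it needs only the vanishing input that Lemma~\ref{lem:Nw} now supplies; you can cite it rather than devise an uncrossing argument.
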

\begin{proof}
The proof largely repeats that of \cite[Proposition 2.18]{Fe24}. We outline the argument and refer to~\cite{Fe24} for further details. 

Consider $\bs\in\bZ_{\ge 0}^{P_d}$ not contained in $S_\bm$. We find $P'\subset P_d$ for which the respective inequality~\eqref{eq:Xm} is violated. We decompose $P'$ into a disjoint union of $w(P')$ chains. For each chain $C_i$, we consider its supremum (or join) $\beta_i\in P_d$. Set $b_i=\sum_{\alpha\in C_i} s_\alpha$. By Lemma~\ref{lem:Nw}, we have $\prod f_{\beta_i}^{b_i} v_\bm = 0$.

Next, we have an action of $\fb$ on $\fn_-$, viewing the latter as the quotient of $\fb$-modules $\fg/\fb$. One may define a product $K=E_{i_1,j_1}\dots E_{i_N,j_N}\in \U(\fb)$ for which 
\begin{equation}\label{eq:Kprodf}
K\prod f_{\beta_i}^{b_i} = f^\bs + D    
\end{equation} 
holds in $\U(\fb\ltimes\fn_-)$, where $D$ is a sum of PBW monomials in $\U(\fn_-)$ that precede $f^\bs$ under a certain monomial order. Since the left-hand side of~\eqref{eq:Kprodf} annihilates $v_\bm$ and so does any root vector in $\fn_-$ outside $\fa_d$, we conclude by induction on $\bs$ over the mentioned monomial order that $f^\bs v_\bm$ is a linear combination of $f^{\bs'}v_\bm$ with $\bs'\in S_\bm$.
\end{proof}

It remains to prove linear independence of vectors $f^\bs v_\bm$.
To be precise, we show that $\dim V_\bm\ge |S_\bm|$. Together with Corollary 
\ref{cor:span} this would give linear independence.

Recall (see Corollary \ref{cor:cardS}) that the cardinality of $S_\bm$ is equal
to the sum over partitions $\lambda$ dominated by $\Lambda_\bm$ of $\dim R_\la$,
where 
\[
R_\la = \Sigma_\la V_d^*\T \Sigma_\la V_{n-d}.
\]

Let $M=\sum_{i=1}^d m_i$.
\begin{lem}
There exists a surjective homomorphism of $\fp^-_d$-modules $V_{M\om_d}\to V_\bm$
sending the highest weight vector $v_{M\om_d}$ to the cyclic vector $v_\bm\in V_\bm$.
\end{lem}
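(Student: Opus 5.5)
The plan is to realize $V_\bm$ as a quotient of a cyclic tensor product of copies of $V_{\om_d}$, and then use that $V_{M\om_d}$ is the Cartan component of $V_{\om_d}^{\T M}$.

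First, for each $r\in[d]$ the truncated module $V^r_{\om_d}=V_{\om_d}/D^{r+1}_1$ comes with the canonical projection $\pi_r\colon V_{\om_d}\to V^r_{\om_d}$. This map is a homomorphism of $\fp^-_d$-modules, by Lemma~\ref{lem:thealgebra} and the $\fp^-_d$-invariance of $D^{r+1}_1$ (which is $\fb_-$-stable and $\mgl(V_d)\oplus\mgl(V_{n-d})$-stable). It sends $v_{\om_d}$ to $v^r_{\om_d}$. Form the tensor product of $M$ such maps, with $m_r$ copies of $\pi_r$ for each $r$:
\[
\Pi=\bigotimes_{r=1}^d \pi_r^{\T m_r}\colon V_{\om_d}^{\T M}\to \bigotimes_{r=1}^d (V^r_{\om_d})^{\T m_r}.
\]
This is a $\fp^-_d$-homomorphism, since $\fp^-_d$ acts on tensor products via the coproduct. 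It sends $v_{\om_d}^{\T M}$ to $\bigotimes_r (v^r_{\om_d})^{\T m_r}=v_\bm$.

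Next, restrict $\Pi$ to the Cartan component $V_{M\om_d}\subset V_{\om_d}^{\T M}$, which is the image of the embedding $v_{M\om_d}\mapsto v_{\om_d}^{\T M}$. As recalled in the preliminaries, $V_{M\om_d}=\U(\fa_d)v_{\om_d}^{\T M}$. Since $\Pi$ intertwines the $\fa_d$-actions, we get
\[
\Pi(V_{M\om_d})=\U(\fa_d)\Pi(v_{\om_d}^{\T M})=\U(\fa_d)v_\bm=V_\bm .
\]
The last equality is the definition of the cyclic tensor product: $V_\bm$ is the $\U(\fa_d)$-span of the tensor product of the cyclic vectors. Moreover $V_\bm$ is $\fp^-_d$-stable, being the image of a $\fp^-_d$-module under a $\fp^-_d$-map. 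So $\Pi|_{V_{M\om_d}}$ is the required surjection.

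The one point needing care is that $V_{M\om_d}$ is stable under $\fp^-_d$ inside $V_{\om_d}^{\T M}$ and not just under $\fa_d$. This holds because it is a $\mgl_n$-submodule, hence $\fp^-_d$-stable. The grading conventions (working with $\mgl_n$ rather than $\msl_n$) only affect a one-dimensional twist and cause no problem. Everything else is formal.
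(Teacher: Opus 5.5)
Your proof is correct and takes essentially the same route as the paper. Both arguments view $V_{M\om_d}$ as the cyclic product of $M$ copies of $V_{\om_d}$ and obtain $V_\bm$ as its image under the tensor product of the quotient maps $V_{\om_d}\to V^r_{\om_d}$; you just make the paper's one-line argument explicit by writing down $\Pi$ and checking its $\fp^-_d$-equivariance and surjectivity.
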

\begin{proof}
By definition, $V_\bm=\bigodot_{r=1}^d (V_{\om_d}^r)^{\odot m_r}$ and 
$V_{M\om_d}=\bigodot_{r=1}^d (V_{\om_d})^{\odot m_r}$. Each $V^r_{\om_d}$ is a quotient
of $V_{\om_d}$. Hence, $V_\bm$, as  the cyclic product, is also a quotient of the
cyclic product of $(V_{\om_d})^{\odot m_r}$.
\end{proof}

By~\eqref{eq:Cauchy}, $V_{M\om_d}$ admits the following decomposition as an 
$\msl(V_d)\oplus\mgl(V_{n-d})$-module:
\begin{equation}\label{eq:Cauchyrect}
V_{M\om_d} \simeq \bigoplus_{\la\le (M^d)} R_\la,\ (M^d)= (\underbrace{M,\dots,M}_d).
\end{equation}
Hence, as an $\msl(V_d)\oplus\mgl(V_{n-d})$-module, $V_\bm$ decomposes into a direct sum of certain
summands in \eqref{eq:Cauchyrect}. We want to show that the summands appearing in 
$V_\bm$ correspond precisely to $\la$ dominated by $\Lambda_\bm$ (note that $\La_\bm\le (M^d))$.

In what follows, it will be convenient to work with the natural surjection
$\U(\fa_d)\to V_\bm$. Since $\fa_d$ is abelian, the universal enveloping algebra is
isomorphic to a polynomial algebra in the variables $E_{i,j}$. Since 
$\fa_d\simeq V_d^*\T V_{n-d}$, one gets the Cauchy decomposition
\[
\U(\fa_d)\simeq \bigoplus_{\la=(\la_1\ge \dots \ge\la_d\ge 0)} 
\Sigma_\la V_d^*\T \Sigma_\la V_{n-d}.
\]

We will prove that for any $\la\le \Lambda_\bm$ there exists a
vector in $V_\bm$ that projects nontrivially to $R_\la\subset \U(\fa_d)$.
To this end, we recall the following statement from \cite{DCEP80}.

For $k\in[d]$, let $\Delta_k\in \U(\fa_d)$ be the determinant of the $k\times k$ matrix with entries
$(E_{d+i,j})_{i,j=1}^k$; in particular, $\Delta_1=E_{d+1,1}$. The following is proved in~\cite[Theorem 3.3 and Corollary 3.4]{DCEP80}: 

\begin{prop}\label{prop:RlaHWV}
For any partition $\la=(\la_1,\dots,\la_d)$, the product
$\Delta_\la = \prod_{k=1}^d \Delta_k^{\la_k-\la_{k+1}}$ is the highest weight vector of the submodule $R_\la\subset \U(\fa_d)$.
\end{prop}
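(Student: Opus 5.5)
We want to show that for any partition $\la=(\la_1\ge\dots\ge\la_d\ge 0)$, the product $\Delta_\la=\prod_{k=1}^d\Delta_k^{\la_k-\la_{k+1}}$ (with $\la_{d+1}=0$) is a highest weight vector of $R_\la\subset\U(\fa_d)\simeq\bC[E_{i,j}]$ for $\mgl(V_d)\oplus\mgl(V_{n-d})$. The plan is to check directly that $\Delta_\la$ has the right weight and is annihilated by the raising operators, and then use multiplicity-freeness of the Cauchy decomposition.

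\emph{Weight.} The matrix $(E_{d+i,j})_{i,j=1}^k$ involves rows $d+1,\dots,d+k$ and columns $1,\dots,k$. Each term of $\Delta_k$ is a product of $E_{d+i,\pi(i)}$ over a permutation $\pi$ of $[k]$. Since $\fa_d\simeq V_d^*\T V_{n-d}$, this term has $\mgl(V_{n-d})$-weight $\ve_{d+1}+\dots+\ve_{d+k}$ and $\mgl(V_d)$-weight $-(\ve_1+\dots+\ve_k)$. Hence $\Delta_\la$ has weight $\la$ for $\mgl(V_{n-d})$ and $-\la$ (suitably ordered) for $\mgl(V_d)$. This is the highest weight of $\Sigma_\la V_d^*\T\Sigma_\la V_{n-d}$ with respect to the Borel subalgebras that are upper triangular on $V_{n-d}$ and lower triangular on $V_d$, the latter being the Borel for which $V_d^*$ has highest weight $-\ve_d$ reversed. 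The only bookkeeping needed is to fix this convention consistently.

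\emph{Annihilation by raising operators.} The Levi algebra acts on the polynomial ring by derivations, via the adjoint action on $\fa_d$. A raising operator $E_{d+a,d+b}$ with $a<b$ acts on each $\Delta_k$ by replacing row $d+b$ with row $d+a$. If $b>k$, row $d+b$ does not occur in the minor, so the result is $0$. If $b\le k$, the resulting minor has two equal rows and again vanishes. The operators $E_{b,a}$ with $b>a$ acting on the $V_d$ side (through the dual action) replace column $b$ by column $a$, up to sign, and vanish for the same reasons. Since these operators act as derivations, $\Delta_\la$ is killed by every raising operator.

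\emph{Identification with $R_\la$.} The Cauchy decomposition $\U(\fa_d)\simeq\bigoplus_\la R_\la$ is multiplicity-free, and the $R_\la$ have pairwise distinct highest weights. So a nonzero weight-$\la$ highest weight vector must lie in $R_\la$ and spans its highest weight line. The vector $\Delta_\la$ is nonzero because it is a product of nonzero polynomials in a domain. This completes the argument.

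The only potential obstacle is the convention for the $\mgl(V_d)$ Borel and the dual weights. This part is routine sign bookkeeping, and the vanishing itself rests just on the equal-rows and equal-columns argument.
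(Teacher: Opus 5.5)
Your argument is correct, but it is not the route the paper takes. The paper gives no proof of its own: it quotes \cite[Theorem 3.3 and Corollary 3.4]{DCEP80}, where the fact that products of leading principal minors are the highest weight vectors of the Cauchy components comes out of the bitableaux/straightening theory for determinantal ideals. That is the same source the paper relies on for Proposition~\ref{prop:colhomr}.

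You instead verify the claim directly, in three steps:
\begin{enumerate}
\item $\Delta_\la$ has weight $\bigl(-\sum_i\la_i\ve_i,\ \sum_i\la_i\ve_{d+i}\bigr)$.
\item Each $\Delta_k$ is killed by the upper triangular part of $\mgl(V_{n-d})$ and the lower triangular part of $\mgl(V_d)$, because such a derivation applied to the minor either touches no row or column of it or produces a repeated row or column.
\item Multiplicity-freeness of the Cauchy decomposition forces a nonzero invariant vector of that weight to lie in $R_\la$.
\end{enumerate}

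This is self-contained and uses only the Cauchy decomposition, which the paper already invokes. It also makes explicit something the paper leaves implicit, namely the Borel subalgebra with respect to which $\Delta_\la$ is highest. That choice is not cosmetic. For the upper triangular Borel of $\mgl(V_d)$ the claim fails: for example $[E_{1,2},E_{d+1,1}]=-E_{d+1,2}\neq 0$, so $\Delta_1$ is lowest rather than highest there.

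With your choice, no ``reordering'' of $-\la$ is needed, contrary to your hedge. The highest weight of $\Sigma_\la V_d^*$ with respect to the lower triangular Borel is exactly $-\sum_i\la_i\ve_i$, which matches the weight of $\Delta_\la$ on the nose.

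Finally, the paper's only use of this proposition (in Corollary~\ref{cor:showsup}) needs just $0\neq\Delta_\la\in R_\la$, and your argument delivers that directly.
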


Our goal is to find a nonzero vector in $V_\bm$ such that its preimage in $\U(\fa_d)$
has a nontrivial projection onto $R_\la$. We start with the following combinatorial lemma.

For a partition $\Lambda$ and $a\le\Lambda_1$, let $\Lambda^{a}$ be the partition defined as follows. We consider the transposed diagram $\Lambda'$, decrease the first $a$ parts of $\Lambda'$ by 1 obtaining a new tuple $\tilde\Lambda'$, and denote the partition obtained by reordering $\tilde\Lambda'$ by $(\Lambda^a)'$. The partition $\Lambda^a$ is defined as the transpose of $(\Lambda^a)'$. In other words, $\Lambda^a$ is obtained from $\Lambda$ by shortening $a$ of its columns by 1 with these columns chosen to be as long as possible.

\begin{lem}\label{lem:cutcell}
Let $\la=(\la_1\ge \dots \ge \la_s>0)$ and $\Lambda=(\Lambda_1\ge \dots \ge \Lambda_r>0)$ 
be two partitions such that $\la\le \Lambda$. Then \[(\la_1,\dots,\la_{s-1})\le\Lambda^{\la_s}.\]
\end{lem}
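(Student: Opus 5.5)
We compare partial sums directly. Write $\la^-=(\la_1,\dots,\la_{s-1})$ and $a=\la_s$. We must show that for every $k$
\[
\la_1+\dots+\la_k\le \Lambda^a_1+\dots+\Lambda^a_k .
\]
For $k\ge s$ the left-hand side is constant, so it suffices to treat $k\le s-1$ and also to control $k\ge s$ via $k=s-1$.

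The first step is a formula for the partial sums of $\Lambda^a$. Since $\Lambda^a$ is obtained by removing the bottom cell from each of the $a$ longest columns, the cells removed from the first $k$ rows are exactly those bottom cells lying in rows $\le k$. These are the cells coming from columns among the first $a$ whose height is at most $k$. With $\Lambda'$ the transpose, this gives
\[
\Lambda^a_1+\dots+\Lambda^a_k=\Lambda_1+\dots+\Lambda_k-\#\{j\le a:\ \Lambda'_j\le k\}.
\]
The removal does not disturb the reordering issue, since columns of equal height are shortened as a block from the left, so the row-count formula is valid as stated. Note that $\#\{j\le a:\Lambda'_j\le k\}=\max(0,a-\Lambda_{k+1})$, because $\Lambda'_j\ge k+1$ exactly when $j\le\Lambda_{k+1}$.

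The required inequality therefore becomes
\[
\la_1+\dots+\la_k\le \Lambda_1+\dots+\Lambda_k-\max(0,a-\Lambda_{k+1}),\qquad k\le s-1 .
\]
If $\Lambda_{k+1}\ge a$, this is just the hypothesis $\la\le\Lambda$. Otherwise $\Lambda_{k+1}<a$, and we must prove
\[
\la_1+\dots+\la_k+a\le \Lambda_1+\dots+\Lambda_{k+1}.
\]
We use $\la_{k+1}\ge\la_s=a$ (valid since $k+1\le s$) together with dominance at level $k+1$:
\[
\la_1+\dots+\la_k+a\le\la_1+\dots+\la_{k+1}\le\Lambda_1+\dots+\Lambda_{k+1}.
\]
This settles the case.

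For $k\ge s$, the left side equals $\la_1+\dots+\la_{s-1}$, which is bounded by the $k=s-1$ case; the right side is nondecreasing in $k$. A side point is that $a\le\Lambda_1$ is needed for $\Lambda^a$ to be defined. This holds since $a\le\la_1\le\Lambda_1$.

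The main point needing care is the row-sum formula for $\Lambda^a$, specifically confirming that reordering after shortening the longest columns removes exactly one cell from the bottom of each of the $a$ longest columns. This holds because, among columns of equal height, shortening the leftmost ones and reordering is equivalent to shortening the rightmost ones of that height block.
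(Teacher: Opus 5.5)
Your proof is correct and is essentially the paper's argument. The paper likewise computes the partial sums of $\Lambda^{\la_s}$ as $S_k$ when $\Lambda_{k+1}\ge\la_s$ (its case $k<t$) and as $S_{k+1}-\la_s$ otherwise. In the latter case it also uses $\la_{k+1}\ge\la_s$ together with dominance at level $k+1$. The only cosmetic difference is that you treat $k\ge s$ via monotonicity of the right-hand side, whereas the paper bounds $\ov s_k=s_k-\la_s\le S_k-\la_s$ directly.
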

\begin{proof}
Denote the partial sums of $\Lambda$ and $\la$ by $S_k=\Lambda_1+\dots+\Lambda_k$ and $s_k=\la_1+\dots+\la_k$. Similarly, denote the partial sums of $\Lambda^{\la_s}$ by $S^{\la_s}_k$ and those of $(\la_1,\dots,\la_{s-1})$ by $\ov s_k$. We are to show that $\ov s_k\le S^{\la_s}_k$ for all $k$.

Consider the largest $t$ such that $\Lambda_t\ge\la_s$. We have $S^{\la_s}_k=S_k$ for $k<t$, while $S^{\la_s}_k=S_{k+1} - \la_s$ for $k\ge t$.

Now, if $k< t$, then
\[\ov s_k\le s_k\le S_k = S^{\la_s}_k.\]
If $k\ge s$, then
\[\ov s_k = s_k-\la_s\le S_k-\la_s \le \min(S_k,S_{k+1}-\la_s)\le S^{\la_s}_k.\]
Finally, if $t\le k < s$, then
\[\ov s_k = s_k = s_{k+1}-\la_{k+1}\le s_{k+1}-\la_s\le S_{k+1}-\la_s = S^{\la_s}_k.\qedhere\]
\end{proof}

\begin{prop}\label{prop:nonvanish}
The vector $\Delta_\la v_\bm$ is nonzero if $\la\le \Lambda_\bm$ and $\ell(\la)\le d$.    
\end{prop}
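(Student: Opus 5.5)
We argue by induction on the number of parts $s=\ell(\la)$, with the inductive step driven by Lemma~\ref{lem:cutcell}. The case $s=0$ is trivial, since $\Delta_\emptyset v_\bm=v_\bm\ne0$. Write $\la=(\la_1\ge\dots\ge\la_s>0)$. Then $\Delta_\la=\Delta_s^{\la_s}\cdot\Delta_{\bar\la}$, where $\bar\la=(\la_1-\la_s,\dots,\la_{s-1}-\la_s)$; but it is more convenient to peel off the \emph{last row}. Since $\fa_d$ is abelian, we may factor $\Delta_\la=\Delta_{\la'}\cdot\Delta_{(\la_s^s)}$-type pieces freely. The precise goal is to split $v_\bm=v_{\bm^{(1)}}\otimes v_{\bm^{(2)}}$, where the tensor factors are chosen so that a block of $\la_s$ full-height-$s$ determinants is absorbed by $\la_s$ of the tensor factors $v^r_{\om_d}$.

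The key step is as follows. The partition $\Lambda^{\la_s}$ from Lemma~\ref{lem:cutcell} is obtained by shortening the $\la_s$ longest columns of $\Lambda_\bm$, so $\Lambda^{\la_s}=\Lambda_{\bm'}$, where $\bm'$ is obtained from $\bm$ by moving $\la_s$ units from the largest indices $r$ to $r-1$. Correspondingly, for each of the $\la_s$ chosen factors $v^r_{\om_d}$ (with $r\ge s$, since $\la\le\Lambda_\bm$ forces the $\la_s$ longest columns to have height $\ge s$), I would use the $\fp^-_d$-equivariant structure: the vector $\Delta_s v^r_{\om_d}$ is, up to a sign, the class of $v_{[d]\setminus[s]\cup[d+1,d+s]}$, i.e.\ an extremal vector of degree $s$ generating a copy of the truncated module of rank $r-s$ for the smaller Levi. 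Rather than this, I would use a cleaner leading-term argument. Expand $\Delta_\la v_\bm$ by distributing the factors among tensor components; pick a total order (e.g.\ a weight/degree filtration on each $V^r_{\om_d}$ and lexicographic on tensor positions), and identify a distinguished summand: assign the $k\times k$ minors so that each tensor factor $v^{r}$ receives a product of minors in the ``column'' pattern of a filling of $\la$ into the columns of $\Lambda_\bm$ (a column of height $r$ receives cells of $\la$ forming a column of height $\le r$). Such a filling exists exactly when $\la\le\Lambda_\bm$; Lemma~\ref{lem:cutcell} gives it by induction (remove the last row of $\la$, placing one cell in each of the $\la_s$ longest columns, and recurse into $\Lambda^{\la_s}$).

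For the nonvanishing itself, I would show that the resulting pure tensor $\bigotimes v_{I_c}$, where $I_c=([d]\setminus[h_c])\cup[d+1,d+h_c]$ and $h_c$ is the height of the $\la$-column placed in column $c$, appears with nonzero coefficient. Acting by $\Delta_k$ on a single factor $v^r_{\om_d}$ gives $\pm v_{I}$ with $|I_{>d}|=k$, which survives in $V^r$ iff $k\le r$. Moreover, products of two $\Delta$'s on one wedge factor vanish, since $f_\beta^2=0$ and the degrees add. Hence every surviving term of $\Delta_\la v_\bm$ with respect to the weight decomposition is a tensor of such $v_{I}$'s, determined by the multiset of $\Delta$-factors, each assigned to a distinct tensor component of sufficient height. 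By $\mgl(V_d)\oplus\mgl(V_{n-d})$-weight considerations these are all vectors of the same shape. I would then argue nonvanishing by pairing with a suitable functional, namely the coefficient of one fixed pure tensor. The coefficient is a sum over assignments equal to that tensor up to permuting equal-height positions; I expect all signs to agree because each $\Delta_k$ acts on $v_{\om_d}$ by the same sign, giving a positive count.

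The main obstacle is controlling cancellation: the components of $\Delta_k$ are full determinants, and their expansion over several tensor factors produces mixed terms in which one minor is split across factors. I would handle this by pairing with a functional that only detects tensors whose factors are exactly of the form $v_{I_c}$ with $I_c$ as above. Such a functional kills split contributions, because a split minor produces in some factor an index set that is not an initial block $[d+1,d+h]$ in the required column pattern, or it fails the weight count. This reduces the computation to the positive count of valid fillings.
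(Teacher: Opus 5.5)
There is a genuine gap, and it lies exactly where weak dominance differs from containment of diagrams.

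You claim that $\la\le\Lambda_\bm$ forces the $\la_s$ longest columns of $\Lambda_\bm$ to have height at least $s$. This is false. Take $d\ge2$, $\bm=(2,0,\dots,0)$ (so $\Lambda_\bm=(2)$) and $\la=(1,1)\le\Lambda_\bm$. Every column of $\Lambda_\bm$ has height $1$, and $\Delta_\la=\Delta_2$ kills each factor $v=v^1_{\om_d}$. Nevertheless
\[
\Delta_2(v\T v)=E_{d+1,1}v\T E_{d+2,2}v+E_{d+2,2}v\T E_{d+1,1}v-E_{d+1,2}v\T E_{d+2,1}v-E_{d+2,1}v\T E_{d+1,2}v\neq0,
\]
and \emph{every} surviving term splits the minor across two factors.

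This is not an accident of the example. Each pure tensor occurring in $\Delta_\la v_\bm$ contains $v_{d+i}$ exactly $\la_i$ times. Your target $\bigotimes_c v_{I_c}$, with $I_c=([d]\setminus[h_c])\cup[d+1,d+h_c]$, contains it $\#\{c:h_c\ge i\}$ times. So the target can occur only if the $h_c$ are the column lengths of $\la$. Since $h_c$ cannot exceed the height of column $c$, this means $\la\subseteq\Lambda_\bm$. In other words, your ``filling'' exists under containment, not under $\la\le\Lambda_\bm$. In the remaining cases, which are exactly the ones Theorem~\ref{thm:main} needs, a functional that kills split contributions returns $0$.

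Even under containment the mechanism is not right. For $\la=(2,1)$, a term of $\Delta_2$ can put $E_{d+2,2}$ on the same factor as the $E_{d+1,1}$ coming from $\Delta_1$, and its own $E_{d+1,1}$ on another factor; this lands on your target. So ``the functional kills split terms'' and the ensuing sign count are unjustified.

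Your recursive peeling via Lemma~\ref{lem:cutcell} is the right combinatorics. However, it assigns to column $c$ an arbitrary set $R_c$ of rows of $\la$, not an initial segment. The correct witness is $\bigotimes_c v_{([d]\setminus R_c)\cup(d+R_c)}$, which is reached only through split minors. The missing idea is a way to show its coefficient is nonzero.

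The paper proceeds one row at a time:
\begin{itemize}
\item It separates $\Delta_\la=E_{d+s,s}^{\la_s}\Delta_{\bar\la}+\overline{\Delta_\la}$ with $\bar\la=(\la_1,\dots,\la_{s-1})$.
\item It forms $w^\circ$ by applying $E_{d+s,s}$ to the $\la_s$ factors of largest $r$.
\item It shows $\U(\fa')w^\circ\simeq\U(\fa')v_{\ov\bm}$ for $\fa'=\spa\{E_{d+i,j}:i,j<s\}$, because $E_{d+s,s}v^r_{\om_d}$ behaves under $\fa'$ like $v^{r-1}_{\om_d}$.
\item Since $\Lambda_{\ov\bm}=\Lambda_\bm^{\la_s}$, Lemma~\ref{lem:cutcell} and induction on $|\Lambda_\bm|$ finish.
\end{itemize}

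One caution if you adopt this: the paper's claim that $\overline{\Delta_\la}v_\bm$ has zero coefficient on the witness is too quick. A monomial of $\Delta_s$ may put its $E_{a,s}$ and $E_{d+s,b}$ on the same factor. For $\bm=(0,1,0,\dots,0)$ and $\la=(1,1)$ one gets $-E_{d+1,2}E_{d+2,1}v_\bm=E_{d+2,2}E_{d+1,1}v_\bm$. There the two contributions add, but in general you must show that they reinforce rather than cancel. This sign coherence is precisely what your sketch also leaves open.

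A route that avoids it is to pair $\Delta_\la$ with $V_\bm^*$. That space is spanned by products of minors whose sizes are bounded by the column heights of $\Lambda_\bm$. Then use the De Concini--Eisenbud--Procesi decomposition together with multiplicity-freeness of $\U(\fa_d)$ as an $\mgl(V_d)\oplus\mgl(V_{n-d})$-module. This is how the paper gives its alternative proof of the decomposition in Theorem~\ref{thm:main}.
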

\begin{proof}
Choose $\la\le\Lambda_\bm$. We assume that $\la$ is nonzero, otherwise the claim is trivial. Let $\la_s$ be the last nonzero entry of $\la$.
Consider the decomposition $\Delta_\la = \Delta_\la^\circ + \overline{\Delta_\la}$,
where $\Delta_\la^\circ$ is the sum of those terms in $\Delta_\la$ (as a polynomial
in $E_{i,j}$) that are divisible by $E_{d+s,s}^{\la_s}$.
We note that 
\[
\Delta_\la^\circ = E_{d+s,s}^{\la_s} \Delta_1^{\la_1-\la_2}\dots \Delta_{s-1}^{\la_{s-1}}.
\]
In words, the quotient of $\Delta_\la^\circ$ by $E_{d+s,s}^{\la_s}$ is equal to 
$\Delta_{\bar\la}$, where 
$\bar\la$ is obtained from $\la$ by removing the last (shortest) row. 

For $r\in[d]$, $k\in[m_r]$, let $V^{r,k}$ be a copy of $V^r_{\om_d}$ so that
\[
V_\bm = \bigodot_{r=1}^d \left(V^{r,1}\odot \dots\odot V^{r,m_r}\right)\subset 
\bigotimes_{r=1}^d \left(V^{r,1}\T \dots\T V^{r,m_r}\right).
\]
Thus, any vector in $V_\bm$ is a linear combination of tensor products of the form 
$\bigotimes_{r\in [d]}\bigotimes_{k\in [m_r]} w_{r,k}$ for some $w_{r,k}\in V^{r,k}$, each of the form $v_I$.
Our goal is to find vectors $w^\circ_{r,k}$ of this form for which the coefficient of $\bigotimes_{r\in [d]}\bigotimes_{k\in [m_r]} w^\circ_{r,k}$ in $\Delta_\la v_\bm$ does not vanish.

We denote the highest weight vector of $V^{r,k}$ by $v^{r,k}$; under the natural isomorphism with $V_{\om_d}^r$ it corresponds to (the class of) 
$v_1\wedge\dots\wedge v_d$. We define $w^\circ_{r,k}$ as follows. Let $r_0$ be  the largest $r$ for which
$m_r>0$. Let us order all pairs $(r,k)$, $r\in [d]$, $k\in [m_r]$ first by $r$ 
in the decreasing order and then by $k$ in decreasing order, i.e.,
\[
(r_0,m_{r_0}), (r_0,m_{r_0}-1), \dots, (r_0,1), (r_0-1,m_{r_0-1}), (r_0-1,m_{r_0-1}-1),\dots. 
\]
We choose the first $\la_s$ pairs $(r,k)$ from this list and for these set 
\[
w^\circ_{r,k} = E_{d+s,s} v^{r,k} = v_{1,\dots,s-1,d+s,s+1,\dots,d}.
\]
For all remaining pairs $(r,k)$, we set $w^\circ_{r,k}=v^{r,k}$. 
We denote the tensor product $\bigotimes_{r,k} w^\circ_{r,k}$ by $w^\circ$.  

One sees that $w^\circ$ is contained in $E_{s+d,s}^{\la_s}v_\bm$ with coefficient $\la_s!$. Indeed, this coefficient is equal to the number of ways to distribute
the $\la_s$ factors $E_{s+d,s}$ among the $\la_s$ chosen pairs $(r,k)$. 
    
Recall the decomposition $\Delta_\la = E_{d+s,s}^{\la_s} \Delta_{\bar\la} + \overline{\Delta_\la}$.
Let us show that
\begin{equation}\label{eq:nonzeroequiv}
\Delta_{\bar\la}w^\circ \ne 0\quad \text{ implies }\quad \Delta_\la v_\bm\ne 0.     
\end{equation}
Indeed, $\Delta_{\bar\la}w^\circ \ne 0$ means that there exists a vector 
$w=\bigotimes_{r,k} w_{r,k}$ with each $w_{r,k}$ a pure wedge product 
$v_{I(r,k)}$, $I(r,k)\in \binom{[n]}{d}$, such that the coefficient of $w$ in $\Delta_{\bar\la}w^\circ$ is nonzero. We claim that the coefficient of $w$ in
$\overline{\Delta_\la}v_\bm$ vanishes. Observe that the collection
$\{I(r,k)\}_{r,k}$ satisfies the following property: there exist exactly $\la_s$ pairs
$(r,k)$ such that $s\notin I(r,k)$, $d+s\in I(r,k)$. Indeed, this property holds for 
$w^\circ$ and remains in place after applying $\Delta_{\bar\la}$.
However, $\overline{\Delta_\la}v_\bm$ contains no summands with this property.
Indeed, $\Delta_\la v_\bm= \prod_{i=1}^s \Delta_i^{\la_i-\la_{i+1}}v_\bm$.
Factors $\Delta_i$, $i<s$ do not involve generators of the forms $E_{a,s}$ and $E_{d+s,b}$, hence, 
as operators on $V_{\om_d}$, they vanish on $v_s$ and never produce $v_{d+s}$.
Furthermore, each monomial in $\Delta_s$ contains exactly one factor of the form $E_{a,s}$ and one factor of the form $E_{d+s,b}$. Thus, the only monomials in $\Delta_\la$ that map $v_\bm$ to a vector with the mentioned property are those divisible by $E_{d+s,s}^{\la_s}$. 

Now it remains to show that $\Delta_{\bar\la}w^\circ \ne 0$. We do so by induction on $|\La_\bm|$, the base $|\La_\bm|=1$ being trivial.
Let $\ov\bm\in\bZ_{\ge 0}^d$ be the collection whose $r$-th entry is the number of height-$r$ 
columns of the diagram $\Lambda_\bm^{\la_s}$ (see Lemma  \ref{lem:cutcell}). 
In other words, $\Lambda_{\ov\bm} = \Lambda_\bm^{\la_s}$.
We claim that $\Delta_{\bar\la}w^\circ \ne 0$ in $V_\bm$ if and only if 
$\Delta_{\bar\la}v_{\ov \bm} \ne 0$ in $V_{\ov\bm}$. 
In fact, we show that the $\fa'$-modules generated by $v_{\ov\bm}$ and $w^\circ$ are isomorphic, where $\fa'$ is spanned by $E_{i,j}$ with $d+1\le i < d+s$, $1\le j <s$. This would imply the desired claim, since all the matrix units $E_{i,j}$ appearing in $\Delta_{\bar\la}$ belong to $\fa'$.
 
Recall that $w^\circ$ differs from $v_\bm$ by the substitution
\[
v^{r,k} \mapsto v_1\wedge\dots \wedge v_{s-1}\wedge v_{d+s}\wedge v_{s+1}\wedge \dots \wedge v_d
\]
in $\la_s$ positions. Let us denote the right-hand side by $u^{r,k}$.
The map  $u^{r,k}\mapsto v^{r-1}_{\om_d}$ extends to an $\fa'$-module isomorphism $\U(\fa')u^{r,k}\simeq \U(\fa')v^{r-1}_{\om_d}$.
Indeed, all matrix units $E_{i,j}$ appearing in $\fa'$ satisfy $d+1\le i < d+s$, $1\le j <s$ 
and hence vanish on $v_s$ and never map to $v_{d+s}$. Since $u^{r,k}$ is obtained from $v_{\om_d}^r$ by replacing a single $v_i$, $i\in[d]$ with $v_j$, $j>d$, it is annihilated by any PBW monomial in $\U(\fa')$ of degree greater than $r-1$.
This provides the isomorphism $\U(\fa')u^{r,k}\simeq \U(\fa')v^{r-1}_{\om_d}$.

Now, recalling the definition of $\ov\bm$, note that $v_{\ov\bm}$ is obtained from $w^\circ$ by replacing each factor of the form $u^{r,k}$ with the respective $v^{r-1}_{\om_d}$. Hence, 
we obtain the desired isomorphism $\U(\fa')w^\circ\simeq \U(\fa')v_{\ov \bm}$.

Finally, by Lemma \ref{lem:cutcell},  
$\bar\la\le \Lambda_{\ov\bm}$. Thus, the induction hypothesis combined with~\eqref{eq:nonzeroequiv} provides $\Delta_{\bar\la}v_{\ov\bm} \ne 0$. 
\end{proof}

\begin{cor}\label{cor:showsup}
Let $\la$ be a partition dominated by $\Lambda_\bm$. Then $R_\la$ is a direct 
summand of the $\msl(V_d)\oplus\mgl(V_{n-d})$-module $V_\bm$.
\end{cor}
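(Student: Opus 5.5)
The plan is to combine Proposition~\ref{prop:nonvanish} with Proposition~\ref{prop:RlaHWV} and the multiplicity-freeness of the Cauchy decomposition. By the lemma giving the surjection $V_{M\om_d}\to V_\bm$ of $\fp^-_d$-modules, and by~\eqref{eq:Cauchyrect}, $V_\bm$ is, as an $\msl(V_d)\oplus\mgl(V_{n-d})$-module, a quotient of $\bigoplus_{\la\le (M^d)} R_\la$. Every $R_\la$ here is irreducible and they are pairwise non-isomorphic: the $\mgl(V_{n-d})$-factor $\Sigma_\la V_{n-d}$ already determines $\la$. Consequently, $V_\bm$ is isomorphic to $\bigoplus_{\la\in L} R_\la$ for some subset $L$ of the partitions with $\la\le(M^d)$. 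It therefore suffices to show that $\la\in L$ whenever $\la\le\Lambda_\bm$. Note that such $\la$ automatically satisfy $\ell(\la)\le d$, since $\Lambda_\bm$ has at most $d$ rows.

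It is cleanest to work with the surjection $\pi\colon \U(\fa_d)\to V_\bm$, $u\mapsto uv_\bm$. This map is equivariant for $\mgl(V_d)\oplus\mgl(V_{n-d})$ up to a one-dimensional twist, because $v_\bm$ spans a one-dimensional module over the Levi subalgebra and $\U(\fa_d)$ carries the adjoint action. The kernel of $\pi$ is a Levi-submodule of the multiplicity-free module $\U(\fa_d)=\bigoplus_\la R_\la$. Hence the kernel is a direct sum of certain $R_\la$, and $V_\bm\simeq\bigoplus_{\la:\,R_\la\not\subset\ker\pi}R_\la$ (up to the twist).

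Now fix $\la\le\Lambda_\bm$. By Proposition~\ref{prop:RlaHWV}, $\Delta_\la$ is a highest weight vector of $R_\la$. Since $R_\la$ is irreducible, $R_\la\cap\ker\pi$ is either $0$ or all of $R_\la$. By Proposition~\ref{prop:nonvanish}, $\pi(\Delta_\la)=\Delta_\la v_\bm\neq 0$, so $R_\la\not\subset\ker\pi$ and therefore $R_\la\cap\ker\pi=0$. Thus $\pi$ maps $R_\la$ isomorphically onto a submodule of $V_\bm$. By complete reducibility this image is a direct summand of $V_\bm$, which proves the corollary.

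The only point requiring care is the equivariance of $\pi$, namely that the twist by the weight of $v_\bm$ does not mix isotypic components. The $\mgl(V_{n-d})$-weight of $v_\bm$ is $0$, and the $\mgl(V_d)$-weight is a scalar multiple of the identity character, so the twist is one-dimensional and the argument goes through. This is consistent with the convention of stating isomorphisms over $\msl(V_d)\oplus\mgl(V_{n-d})$.
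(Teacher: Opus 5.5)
Your argument is correct and is essentially the paper's proof. The kernel of $\U(\fa_d)\to V_\bm$ is a Levi-stable subspace of the multiplicity-free Cauchy decomposition, hence a sum of some $R_\mu$, and Propositions~\ref{prop:RlaHWV} and~\ref{prop:nonvanish} show that $R_\la$ is not among them.

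One small slip: under the weak dominance order used here, $\la\le\Lambda_\bm$ does not force $\ell(\la)\le d$. For example, $(1,1,1)\le(2,1)=\Lambda_{(1,1)}$ for $d=2$. This is harmless, since $R_\la=\Sigma_\la V_d^*\T\Sigma_\la V_{n-d}=0$ when $\ell(\la)>d$. That is also why Proposition~\ref{prop:nonvanish} explicitly assumes $\ell(\la)\le d$.
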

\begin{proof}
Let $V_\bm = \U(\fa_d)/I_\bm$, i.e., $I_\bm$ is the ideal of polynomials vanishing on 
$v_\bm$. Then $I_\bm$ is $\mgl(V_d)\oplus\mgl(V_{n-d})$-invariant, and hence it is equal
to a direct sum of certain spaces $R_\mu$. Propositions~\ref{prop:RlaHWV} and \ref{prop:nonvanish} imply that
the summands $R_\la$, $\la\le \Lambda_\bm$ cannot occur in the decomposition of $I_\bm$ and hence occur in that of $V_\bm$.
\end{proof}

Recall the set $S_\bm$ of integer points of the polytope $X_\bm$ defined by \eqref{eq:Xm}.

\begin{thm}\label{thm:main}
Vectors $f^\bs v_\bm$, $\bs\in S_\bm$ form a basis of $V_\bm$.
Furthermore, one has an isomorphism of $\msl(V_d)\oplus\mgl(V_{n-d})$-modules
\[
V_\bm\simeq \bigoplus_{\substack{\la\le \Lambda_\bm\\ \ell(\la)\le d}} 
\Sigma_\la V_d^*\T \Sigma_\la V_{n-d}.
\]
\end{thm}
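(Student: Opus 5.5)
The plan is a dimension sandwich: an upper bound from the spanning set and a lower bound from the representation-theoretic decomposition.

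First, Corollary~\ref{cor:span} shows that the vectors $f^\bs v_\bm$, $\bs\in S_\bm$, span $V_\bm$, so $\dim V_\bm\le |S_\bm|$. Second, I would write $V_\bm=\U(\fa_d)/I_\bm$. Here $I_\bm$ is an $\mgl(V_d)\oplus\mgl(V_{n-d})$-invariant ideal. The Cauchy decomposition of $\U(\fa_d)$ is multiplicity free, so $I_\bm$ is a direct sum of some of the $R_\mu$, and $V_\bm$ is isomorphic to the direct sum of the remaining ones.

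By Corollary~\ref{cor:showsup}, every $R_\la$ with $\la\le\Lambda_\bm$ and $\ell(\la)\le d$ occurs in $V_\bm$. Hence
\[
\dim V_\bm\ \ge \sum_{\substack{\la\le\Lambda_\bm\\ \ell(\la)\le d}}\dim R_\la = |S_\bm|,
\]
where the last equality is Corollary~\ref{cor:cardS}. Lemma~\ref{lem:RSK-gen} confirms that only shapes with at most $d$ rows arise, since $T$ has entries in $[d]$. Combining the two bounds gives $\dim V_\bm=|S_\bm|$. A spanning set of that cardinality is linearly independent, so the first claim follows.

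For the module isomorphism, the equality of dimensions forces $V_\bm$ to contain no summands $R_\mu$ beyond those with $\mu\le\Lambda_\bm$. Otherwise $\dim V_\bm$ would exceed $|S_\bm|$, because each $R_\mu$ is nonzero; this uses $d\le n-d$, so $\Sigma_\mu V_{n-d}\neq0$. Thus $V_\bm\simeq\bigoplus_{\la\le\Lambda_\bm,\ \ell(\la)\le d}R_\la$ as $\msl(V_d)\oplus\mgl(V_{n-d})$-modules. This $\msl(V_d)$ version is the form in which the quotient is compared with $\U(\fa_d)$; up to the one-dimensional twist by the weight of $v_\bm$, the $\mgl(V_d)$-action on $V_\bm$ matches the one on $\U(\fa_d)$.

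The only real work lies in the already established Proposition~\ref{prop:nonvanish} and Corollary~\ref{cor:showsup}. The step needing care is the multiplicity-freeness argument that the ideal $I_\bm$ is a sum of whole isotypic components $R_\mu$, together with the resulting identification of the quotient. Given the results already in the paper, the rest is bookkeeping.
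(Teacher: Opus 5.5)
Your proposal is correct and is essentially the paper's proof. The upper bound $\dim V_\bm\le |S_\bm|$ comes from Corollary~\ref{cor:span} together with Corollary~\ref{cor:cardS}, the matching lower bound comes from Corollary~\ref{cor:showsup}, and linear independence follows because a spanning set has the right size. Your explicit deduction of the module decomposition from the dimension equality (no extra nonzero summand $R_\mu$ can survive in the multiplicity-free quotient $\U(\fa_d)/I_\bm$) spells out a step the paper leaves implicit.
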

\begin{proof}
By Corollary \ref{cor:span}, vectors $f^\bs v_\bm$, $\bs\in S_\bm$ span $V_\bm$.
Hence, in view of Corollary \ref{cor:cardS}, 
\[
\dim V_\bm\le \sum_{\substack{\la\le \Lambda_\bm\\ \ell(\la)\le d}} 
\dim \Sigma_\la V_d^* \cdot \dim\Sigma_\la V_{n-d}.
\]
Corollary \ref{cor:showsup} gives the opposite inequality and hence implies equality.
We conclude that the vectors $f^\bs v_\bm$, $\bs\in S_\bm$ form a basis of $V_\bm$ as a spanning set of the appropriate cardinality.
\end{proof}

\section{Varieties}\label{sec:var}
In this section we study the geometric properties of truncated Grassmannians 
and flag varieties. We start with a subsection treating the general case (not necessarily
type $A$ Grassmannians) and then return to the more specific setting.

\subsection{Schubert varieties}\label{subsec:general}
In this section, $\fg$ is an arbitrary simple Lie algebra with simply connected Lie group $G$.
Let $B\supset H\subset B_-\subset G$ be the Borel subgroup, the Cartan torus, and 
the opposite Borel subgroup. The corresponding Lie subalgebras of $\fg=\mathrm{Lie}(G)$  
are denoted by $\fb\supset \fh\subset \fb_-$.
For a parabolic subgroup $P$ containing $B$,
we denote by $G/P$ the respective (partial) flag variety of $G$.
For a Weyl group element $\sigma\in W$, $G/P$ contains an $H$-fixed point
$p_\sigma$ (these points may coincide for different $\sigma$ when $P\ne B$). 
Let $X_\sigma\subset G/P$ be the opposite Schubert variety, i.e., $X_\sigma$
is the closure of the orbit $B_-p_\sigma$.

\begin{example}
For $\sigma=\mathrm{id}$, one has $X_{\mathrm{id}}=G/P$. For the longest element 
$w_0\in W$, one has $X_{w_0}=p_{w_0}$.
\end{example}

Let $\la\in\fh^*$ be a dominant integral weight and let $V_\la$ be the corresponding 
irreducible highest weight $\fg$-module. 
For $\sigma\in W$, let $v_{\sigma\la}\in V_\la$ be the extremal weight vector of weight
$\sigma\la$. 
We denote by $D_{\sigma\la}\subset V_\la$
the opposite Demazure module, $D_{\sigma\la}=\U(\fb_-)v_{\sigma\la}$.
The quotient $V_\la/D_{\sigma\la}$ will be denoted by $V^\sigma_\la$. In particular,
$V^\sigma_\la$ is a cyclic $\fb_-$-module with the cyclic vector being (the class of) 
the highest weight vector $v_\la$.

For a dominant integral $P$-regular weight $\la$, 
let us consider the $G$-equiva\-riant embedding $G/P\subset \bP(V_\la)$.
The corresponding homogeneous coordinate ring $A_\la$ is given by 
$\bigoplus_{m\ge 0} V_{m\la}^*$; the multiplication in $A_\la$ is induced 
by the $\fg$-equivariant embedding $V_{(m+s)\la}\subset V_{m\la}\T V_{s\la}$
sending $v_{(m+s)\la}$ to $v_{m\la}\T v_{s\la}$. We denote by $I_\la\subset S^\bullet(V^*_\la)$
the defining ideal of the embedding $G/P\subset \bP(V_\la)$;
in particular, $A_\la\simeq S^\bullet(V_\la^*)/I_\la$.

Each Schubert variety $X_\sigma\subset G/P$ is also embedded into $\bP(V_\la)$; 
in particular, the point $p_\sigma$ corresponds to $[v_{\sigma\la}]$.
By \cite{R87} (see also \cite{BK05}), the image of $X_\sigma$ is the intersection 
of the embedded $G/P$ with the projective subspace $\bP(D_{\sigma\la})$. Moreover, the 
reduced scheme structure of $X_\sigma\subset \bP(V_\la)$ is defined by the ideal 
$I_{\sigma\la}$ generated by $I_\la$ and $\mathrm{Ann}(D_{\sigma\la})\subset V_\la^*$.
For the homogeneous coordinate ring $A_{\sigma\la}=S^\bullet(V_\la^*)/I_{\sigma\la}$ 
of  the Schubert variety $X_\sigma$, one has a decomposition 
$A_{\sigma\la}=\bigoplus_{m\ge 0} D^*_{m\sigma\la}$.

Since $I_{\sigma\la}$ contains $I_\la$, we can consider its image in $A_\la$;
we denote the resulting ideal by the same symbol.
One has  the degree decomposition 
$I_{\sigma\la}=\bigoplus_{m\ge 0} I_{\sigma\la}(m)$, $I_{\sigma\la}(m)\subset A_\la(m)$.
By the above discussion, $I_{\sigma\la}$ is generated inside $A_\la$ by the degree-one part $I_{\sigma\la}(1)$,
which is equal to $\mathrm{Ann}(D_{\sigma\la})$. We obtain the following proposition.

\begin{prop}
For any $\la$, $\sigma$ and $m>1$, one has $V_{m\la}^\sigma\simeq V_{(m-1)\la}^\sigma\odot V_\la$.     
\end{prop}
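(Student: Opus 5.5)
We prove the statement by dualizing: both sides are cyclic $\fb_-$-modules generated by (the image of) $v_{m\la}$, so it suffices to identify their annihilators. Equivalently, we identify their duals as subspaces of the same space. Throughout, the cyclic tensor product $V^\sigma_{(m-1)\la}\odot V_\la$ is the $\U(\fb_-)$-span of $\bar v_{(m-1)\la}\T v_\la$ inside $V^\sigma_{(m-1)\la}\T V_\la$.

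The first step is to realize both sides as quotients of $V_{m\la}$. We use the Cartan embedding $V_{m\la}\subset V_{(m-1)\la}\T V_\la$ and compose it with the projection
\[
V_{(m-1)\la}\T V_\la\to V^\sigma_{(m-1)\la}\T V_\la .
\]
This gives a $\fb_-$-map $\pi\colon V_{m\la}\to V^\sigma_{(m-1)\la}\T V_\la$. Its image is $\U(\fb_-)(\bar v_{(m-1)\la}\T v_\la)$, because $V_{m\la}=\U(\fb_-)v_{m\la}$. Hence
\[
V^\sigma_{(m-1)\la}\odot V_\la\simeq V_{m\la}/\ker\pi .
\]
Since $V^\sigma_{m\la}=V_{m\la}/D_{m\sigma\la}$, the claim reduces to the equality $\ker\pi=D_{m\sigma\la}$.

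The second step dualizes. A functional on $V_{m\la}$ factoring through $\pi$ is a restriction to $V_{m\la}$ of an element of
\[
(V^\sigma_{(m-1)\la})^*\T V_\la^*=\mathrm{Ann}(D_{(m-1)\sigma\la})\T V_\la^* .
\]
Under the multiplication of $A_\la$, which is dual to the Cartan embedding, the space $(\ker\pi)^\perp$ becomes the image of $\mathrm{Ann}(D_{(m-1)\sigma\la})\cdot A_\la(1)$ in $A_\la(m)=V_{m\la}^*$. Likewise, $D_{m\sigma\la}^\perp\subset V^*_{m\la}$ is the kernel of the restriction $A_\la(m)\to A_{\sigma\la}(m)=D^*_{m\sigma\la}$, namely $I_{\sigma\la}(m)$. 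It therefore suffices to show
\[
I_{\sigma\la}(m)=I_{\sigma\la}(m-1)\cdot A_\la(1)\quad\text{and}\quad I_{\sigma\la}(m-1)=\mathrm{Ann}(D_{(m-1)\sigma\la}).
\]
The second equality is the definition of $I_{\sigma\la}$ in degree $m-1$, by the description $A_{\sigma\la}=\bigoplus D^*_{m\sigma\la}$. The first equality follows because $I_{\sigma\la}$ is generated in degree one (the cited result of \cite{R87}). Indeed, $I_{\sigma\la}(m)=I_{\sigma\la}(1)A_\la(m-1)$, and this is contained in $I_{\sigma\la}(m-1)A_\la(1)$. The reverse inclusion is automatic, since $I_{\sigma\la}$ is an ideal.

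The main obstacle is the bookkeeping in the dualization. One must check that the pairing between $V_{(m-1)\la}^*\T V_\la^*$ and $V_{m\la}$ via restriction is exactly the product in $A_\la$. One must also check that the annihilator of the image of $\pi$, computed inside the full tensor product, is captured by $\mathrm{Ann}(D_{(m-1)\sigma\la})\T V_\la^*$. This holds because the annihilator of $\ker(V_{(m-1)\la}\T V_\la\to V^\sigma_{(m-1)\la}\T V_\la)$ is precisely that tensor product. We also use the compatibility $D_{m\sigma\la}\subset D_{(m-1)\sigma\la}\T D_{\sigma\la}$ noted earlier, which gives $\pi(D_{m\sigma\la})=0$ directly and serves as a sanity check on one inclusion.
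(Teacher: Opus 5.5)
Your proposal is correct and follows essentially the same route as the paper: it identifies $(V^\sigma_{k\la})^*$ with $I_{\sigma\la}(k)=\mathrm{Ann}(D_{k\sigma\la})\subset A_\la(k)$ and dualizes the surjection $I_{\sigma\la}(m-1)\T A_\la(1)\to I_{\sigma\la}(m)$, which comes from the degree-one generation of $I_{\sigma\la}$. Your map $\pi$ and the computation $(\ker\pi)^\perp=I_{\sigma\la}(m-1)\cdot A_\la(1)$ spell out the dualization step that the paper compresses into one line; note that the inclusion $I_{\sigma\la}(1)A_\la(m-1)\subset I_{\sigma\la}(m-1)A_\la(1)$ also needs $A_\la$ to be generated in degree one, which the paper likewise uses implicitly.
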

\begin{proof}
We note that we have homomorphisms of $\fb_-$-modules
\[
I_{\sigma\la} \simeq \bigoplus_{m\ge 0} (V_{m\la}^\sigma)^*\subset \bigoplus_{m\ge 0} (V_{m\la})^*
\simeq A_\la.
\]
Indeed, 
\[
(V_{m\la}^\sigma)^* = (V_{m\la}/D_{m\sigma\la})^*\simeq 
\mathrm{Ann}(D_{m\sigma\la})\subset V_{m\la}^*.
\]
However, $I_{\sigma\la}(m)\subset V_{m\la}^*$ is exactly the annihilator of $D_{m\sigma\la}$.

Since $I_{\sigma\la}$ (as an ideal in $A_\la$) is generated in degree one, we have surjective
$\fb_-$-equivariant morphisms $I_{\sigma\la}(m-1)\T A_\la(1)\to I_{\sigma\la}(m)$ for any $m\ge 2$.
Dualizing, we obtain the desired isomorphism
$V_{m\la}^\sigma\simeq V_{(m-1)\la}^\sigma\odot V_\la$ (because, after dualization, the highest weight vector is mapped to the tensor product of highest weight vectors).
\end{proof}

\begin{rem}
We note that $V_{(m-1)\la}^\sigma\odot V_\la^\sigma$ is \textbf{not}, in general, isomorphic to 
$V_{m\la}^\sigma$. The simplest example is $\fg=\msl_2$, $\la=\omega$, $m=2$,
$\sigma = s_1$. Then $\dim V_\la^\sigma\odot V_\la^\sigma=1$ and 
$\dim V_{2\la}^\sigma =2$.    
\end{rem}

Now, we consider truncated flag varieties 
\[
{F}_\la^\sigma = \overline{B_- [v_\la]} \subset \bP(V_\la^\sigma),
\]
where $[v_\la]$ is the line spanned by $v_\la$. 
We obtain the following corollary. Let $F_\la$ denote $\ov{B_-[v_\la]}\subset\bP(V_\la)$, the partial flag variety corresponding to $\la$. 

\begin{cor}\label{cor:higher}
${F}_{m\la}^\sigma\simeq {F}_{(m+1)\la}^\sigma$ for $m>1$. This variety is 
isomorphic to the closure of the graph of the natural birational isomorphism
${F}_{\la}^\sigma\dashrightarrow F_\la$.
\end{cor}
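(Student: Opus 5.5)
We will derive both claims from the preceding proposition, which gives $V^\sigma_{m\la}\simeq V^\sigma_{(m-1)\la}\odot V_\la$ for every $m>1$. Iterating it, we get for all $m\ge 2$
\[
V^\sigma_{m\la}\simeq V^\sigma_\la\odot V_\la^{\odot(m-1)}\simeq V^\sigma_\la\odot V_{(m-1)\la},
\]
where the last step uses $V_\la^{\odot(m-1)}\simeq V_{(m-1)\la}$ (the Cartan component). This identification is compatible with cyclic vectors: $v_{m\la}$ corresponds to $v_\la\otimes v_{(m-1)\la}$.

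Consider the Segre-type embedding $\bP(V^\sigma_\la)\times\bP(V_{(m-1)\la})\hookrightarrow\bP(V^\sigma_\la\otimes V_{(m-1)\la})$. The submodule $V^\sigma_\la\odot V_{(m-1)\la}$ is the $\U(\fb_-)$-span of the product of cyclic vectors. Hence the $B_-$-orbit of $[v_\la]\times[v_{(m-1)\la}]$ lies in $\bP(V^\sigma_\la\odot V_{(m-1)\la})\simeq\bP(V^\sigma_{m\la})$, and its image there is exactly $B_-[v_{m\la}]$. Taking closures, $F^\sigma_{m\la}$ is isomorphic to the closure of the diagonal $B_-$-orbit through $([v_\la],[v_{(m-1)\la}])$ inside $F^\sigma_\la\times F_{(m-1)\la}$. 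The Segre embedding is a closed embedding, so closures correspond.

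Next we use $F_{(m-1)\la}\simeq F_\la$ via the Veronese map $[v]\mapsto[v^{\otimes(m-1)}]$. This map is $G$-equivariant and sends $[v_\la]$ to $[v_{(m-1)\la}]$, so it applies for $m-1\ge 1$, i.e. $m\ge 2$. Therefore $F^\sigma_{m\la}$ is the closure of $B_-([v_\la],[v_\la])$ in $F^\sigma_\la\times F_\la$. Both factors contain the orbit $B_-[v_\la]$ as an open dense subset, with the same stabilizer on both sides (at least up to the open orbits being identified $B_-$-equivariantly). Hence the diagonal orbit is the graph of the birational map $F^\sigma_\la\dashrightarrow F_\la$ that extends the identity on $B_-[v_\la]$, and its closure is the graph closure. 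This description does not depend on $m\ge 2$, which gives $F^\sigma_{m\la}\simeq F^\sigma_{(m+1)\la}$ for $m>1$.

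The main obstacle is the identification of the orbits. We must check that the projection $\bP(V^\sigma_\la\otimes V_{(m-1)\la})\supset\bP(V^\sigma_{m\la})$ restricted to the orbit is really the pair map. We must also check that the $B_-$-stabilizers of $[v_\la]\in\bP(V^\sigma_\la)$ and $[v_\la]\in\bP(V_\la)$ agree, so that the two open orbits are genuinely $B_-$-isomorphic and the map is birational. If they might differ (for instance $\sigma=w_0$, where $F^\sigma_\la$ is a point), the statement should be read as the graph closure of the rational map $F_\la\dashrightarrow F^\sigma_\la$ induced by the projection $V_\la\to V^\sigma_\la$. The orbit-closure argument above gives exactly this graph closure, defined on the open set where the projection is nonzero. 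In that case we would phrase the second claim via the projection rather than rely on birationality.
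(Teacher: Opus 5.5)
Your proposal is correct and is essentially the paper's argument: both use the proposition $V^\sigma_{m\la}\simeq V^\sigma_{(m-1)\la}\odot V_\la$ to realize $F^\sigma_{m\la}$ as a diagonal $B_-$-orbit closure in a product via the Segre embedding. Both then use the Veronese isomorphism $F_{k\la}\simeq F_\la$ to arrive at $\overline{B_-([v_\la],[v_\la])}\subset F^\sigma_\la\times F_\la$ for every $m\ge 2$. The only difference is bookkeeping: the paper compares $V^\sigma_{(m-1)\la}\odot V_\la$ with $V^\sigma_{(m-1)\la}\odot V_{2\la}$, whereas you iterate down to $V^\sigma_\la\odot V_{(m-1)\la}$.

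Your caution about birationality is justified. The paper's proof also stops at this orbit-closure description, and birationality can genuinely fail: for $\fg=\msl_3$, $\la=\om_1$, $\sigma=w_0$ one gets $F^\sigma_\la\simeq\bP^1$ while $F_\la=\bP^2$. So your reading as the graph closure of the projection $F_\la\dashrightarrow F^\sigma_\la$ is the correct one in general. As the same example shows, however, your parenthetical claim that $F^{w_0}_\la$ is a point is false outside degenerate cases such as $\msl_2$, $\la=\om$.
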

\begin{proof}
For $m>1$ one has
$V_{m\la}^\sigma\simeq V_{(m-1)\la}^\sigma\odot V_\la$ and
\[
V_{(m+1)\la}^\sigma\simeq V_{(m-1)\la}^\sigma\odot V_\la \odot V_\la \simeq V_{(m-1)\la}^\sigma\odot V_{2\la}.  
\]
We conclude that ${F}_{(m+1)\la}^\sigma$ sits inside the product 
${F}_{(m-1)\la}^\sigma\times \bP(V_{2\la})$ and 
${F}_{m\la}^\sigma$ sits inside the product 
${F}_{(m-1)\la}^\sigma\times \bP(V_{\la})$, and in both cases
the corresponding varieties are realized as closures of $B_-$-orbits
through the highest weight lines. However, one has a $B_-$-equivariant
isomorphism
\[
\overline{B_-[v_{2\la}]}\simeq \overline{B_-[v_{\la}]},\
[v_{2\la}]\mapsto [v_\la].
\]
We also note that $\overline{B_-[v_\la]}=F_\la$ and hence, for any $m>1$,  $F_{m\la}^\sigma$
embeds into the product $F_{\la}^\sigma\times F_\la$ as the $B_-$-orbit closure of $[v^\sigma_\la]\times [v_\la]$.
\end{proof}

\subsection{Higher truncated Grassmannians}
Recall that the truncated Grassmannians $\Gr^r$ are defined inside the 
projectivization of the truncated fundamental representations $V^r_{\om_d} = V_{\om_d}^{\sigma_{r+1}}$. 
The higher truncated Grassmannians are defined as $F_{m\om_d}^{\sigma_{r+1}}$ sitting
inside $\bP(V^r_{m\om_d})=\bP(V^{\sigma_{r+1}}_{m\om_d})$.
By Corollary \ref{cor:higher}, for any $m>1$ the higher truncated Grassmannian
is isomorphic to the closure of the graph of the birational isomorphism 
$\Gr^r\dashrightarrow \Gr_d(V)=\Gr^d$
(to be compared with the $r=1$ case of \cite{BSS25,Fe24}). In other words, 
$F_{m\om_d}^{\sigma_{r+1}}\simeq \Gr^\br$ for the tuple $\br=(r,d)$.

Now consider the corresponding cyclic representations.
From the general setup of subsection \ref{subsec:general} we know that 
$V^r_{m\om_d}\simeq V^r_{(m-1)\om_d}\odot V_{\om_d}$ for $m\ge 2$. Hence,
\begin{equation}\label{eq:Vrm}
V^r_{m\om_d}\simeq V_{(r^1d^{m-1})} = V^r_{\om_d}\odot V_{(m-1)\om_d}.
\end{equation}

Let $\bm$ be the tuple in the middle term of \eqref{eq:Vrm}, i.e., 
$m_r=1$, $m_d=m-1$ and $m_i=0$ otherwise.
The partition $\Lambda_\bm$ in this case is given by 
\[
\Lambda_\bm = (\underbrace{m,\dots,m}_r,m-1,\dots,m-1).
\]
Hence the monomial basis $\prod_{\al}f_\al^{s_\al} v_\bm$ is parametrized by 
the set $S_\bm$ described by the inequalities
\[
\sum_{\al\in P} s_\al\le (m-1)w(P) + \min(r,w(P)).
\]

\begin{rem}
The tableaux $T\in SSYT_{d\times m}$ corresponding to elements of $S_\bm$ (see Subsection~\ref{sec:GT}) are those satisfying
$T_{d-r,1}\le d$. Indeed, the conditions \eqref{eq:SSYTbm} state that the number of pairs
$(i,j)$ such that $i\ge d-r-p,\ T_{i,j}\le d$ is at least $p+1$ for $p=0,\dots,d-r-1$; for $p=0$,
this is equivalent to $T_{d-r,1}\le d$, which implies all other inequalities. Hence, the 
complement in $SSYT_{d\times m}$ is given by the condition $T_{d-r,1} > d$. Equivalently, this complement consists of tableaux all of whose columns are elementwise larger than or equal 
to $\sigma_{r+1}([d])=(1,\dots,d-r-1,d+1,\dots,d+r+1)$. We conclude that the complement enumerates the usual Young tableaux basis of the dual of $D_{m\om_d}^{\sigma_{r+1}}$ (see \cite{LMS86,RS97}).    
\end{rem}

\subsection{Homogeneous coordinate rings}
Recall the definition of truncated Grassmannians 
\begin{gather*}
\Gr^r = \Gr_d^r(V) = \overline{\exp(\fa_d) [v^r_{\om_d}]}\subset \bP(V^r_{\om_d}), \ r\in [d],\\
\Gr^\br = \overline{\exp(\fa_d) \prod_{r\in \br}[v^r]} \subset \prod_{r\in \br} \Gr^r,\ \br\subset [d].
\end{gather*}
The base point of $\Gr^\br$ (the line containing the product of cyclic vectors $v^r_{\om_d}$, 
$r\in \br$) will be denoted by $[v^\br]$. By definition, $\Gr^\br$ admits an embedding into the
product of projective spaces $\prod_{r\in \br} \bP(V^r_{\om_d})$.

\begin{prop}\label{prop:coordring}
The multihomogeneous coordinate ring of $\Gr^\br$ is isomorphic to the direct sum 
$\bigoplus_\bm V_\bm^*$, where the sum ranges over tuples $\bm=(m_1,\dots,m_d)$
with $m_r=0$ unless $r\in \br$.
\end{prop}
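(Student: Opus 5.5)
The plan is to identify the multihomogeneous coordinate ring as a subring of the ring of functions on the open orbit, and to dualize the defining cyclic tensor products. First, recall the general recipe for an orbit closure $Y=\overline{U\cdot \prod_{r\in\br}[v^r]}\subset\prod_{r\in\br}\bP(W_r)$, where $U=\exp(\fa_d)$ and $W_r=V^r_{\om_d}$. Its multihomogeneous coordinate ring is
\[
\bigoplus_{\bm}\mathbb C[Y]_\bm ,
\]
where $\mathbb C[Y]_\bm$ is the image of $\bigotimes_{r\in\br}S^{m_r}(W_r^*)$ in the functions on the affine multicone. Since $U$ times the product of cyclic lines is dense in $Y$, and the multicone is $\mathbb C^*$-torus times that orbit, a multihomogeneous polynomial of degree $\bm$ vanishes on $Y$ if and only if it vanishes on every point $\bigotimes_r (\exp(A)v^r)^{\T m_r}$, $A\in\fa_d$. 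Here I use that $\exp(A)$ acts diagonally on tensor products.

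Second, I would identify the linear span of these points. The vectors $\exp(A)\,v_\bm=\bigotimes_r(\exp(A)v^r)^{\T m_r}$, $A\in\fa_d$, lie in $\bigotimes_r W_r^{\T m_r}$, and in fact in its symmetric part. I claim their span is exactly $V_\bm=\U(\fa_d)v_\bm$. One inclusion is clear: $\exp(A)v_\bm$ is a finite sum of elements $A^k v_\bm/k!$, since $\fa_d$ acts nilpotently. For the converse, take $A=tA_1+\dots$, scale, and extract homogeneous components of the polynomial map $A\mapsto \exp(A)v_\bm$ in $A$. The degree-$k$ part is $A^kv_\bm/k!$, and polarizing over $A$ gives all products $E_{\beta_1}\cdots E_{\beta_k}v_\bm$; these lie in the span because the span of the values of a polynomial map contains the span of its homogeneous components. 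Since $\fa_d$ is abelian, $\U(\fa_d)=S(\fa_d)$ is spanned by powers $A^k$, so this gives all of $V_\bm$.

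Third, I would dualize. A degree-$\bm$ form $\phi\in\bigotimes_r S^{m_r}W_r^*$ is a linear functional on $\bigotimes_r S^{m_r}W_r\subset\bigotimes_r W_r^{\T m_r}$. It vanishes on $Y$ if and only if it kills $V_\bm$, so $\mathbb C[Y]_\bm\simeq V_\bm^*$ via restriction. This is surjective because $V_\bm$ is a subspace and functionals extend. The multiplication corresponds to the embedding $V_{\bm+\bm'}\hookrightarrow V_\bm\T V_{\bm'}$, $v_{\bm+\bm'}\mapsto v_\bm\T v_{\bm'}$, which exists because cyclic tensor products are associative inside tensor products. This matches the algebra structure on $\bigoplus_\bm V_\bm^*$ mentioned in the introduction. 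Components with $m_r>0$ for $r\notin\br$ do not occur by construction.

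The main obstacle I expect is the bookkeeping in the second step, namely the claim that the span of orbit points equals the cyclic module. Specifically, one must check that the symmetric-power identification is compatible: $S^{m_r}W_r$ is the span of $w^{\T m_r}$, and $V_\bm$ lies inside $\bigotimes_r S^{m_r}W_r$. This holds because $\U(\fa_d)$ acts through the coproduct, which preserves symmetric tensors. Everything else is formal.
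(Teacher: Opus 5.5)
Your proof is correct and is essentially the paper's argument: both identify the dual of the degree-$\bm$ component of the coordinate ring with the linear span, inside $\bigotimes_r (V^r_{\om_d})^{\T m_r}$, of the image of the orbit $\exp(\fa_d)v_\bm$, and then identify that span with $V_\bm=\U(\fa_d)v_\bm$. The only difference is where the span argument is applied. You prove the span identification directly in every multidegree, using homogeneous components and polarization. The paper uses it only in degrees $e_r$ and reaches general $\bm$ through surjectivity of the product map $H_1^{m_1}\cdots H_d^{m_d}\to H_\bm$ plus a terse highest-weight/cyclicity step, which your version makes explicit.
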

\begin{proof}
To simplify notation, we prove the most general case $\br=[d]$; the 
partial case of arbitrary $\br$ is similar. For $\br=[d]$, the coordinate ring 
is claimed to be isomorphic to the direct sum of all dual representations $V_\bm^*$.

The direct sum $\bigoplus_\bm V_\bm^*$ is endowed with the natural algebra structure
$V_\bm^*\T V_{\bm'}^*\to V_{\bm+\bm'}^*$ coming from the natural embedding 
$V_{\bm+\bm'}\to V_\bm\T V_{\bm'}$; the embedding sends $v_{\bm+\bm'}$ to
$v_\bm\T v_{\bm'}$ and is equivariant with respect to $\fp_d^-$. One easily
sees that $\bigoplus_\bm V_\bm^*$ is $\bZ_{\ge 0}^d$-graded and is generated by degree-$e_r$ components, $r\in [d]$, where $e_r\in \bZ_{\ge 0}^d$ is the $r$-th standard basis 
vector. Explicitly, the algebra $\bigoplus_\bm V_\bm^*$ is generated by the spaces
$(V^r_{\om_d})^*=V^*_{(0,\dots,1,\dots,0)}$.

The multihomogeneous coordinate ring $\bC[\Gr^{[d]}]$ is also graded by $\bZ_{\ge 0}^d$. Let
us denote its degree $\bm$ component by $H_\bm$ and denote $H_r=H_{e_r}$. Since the $\exp(\fa_d)$-span of $v^r_{\om_d}$
is equal to $V^r_{\om_d}$, one has $H_r\simeq (V^r_{\om_d})^*$. For any $\bm\in\bZ_{\ge 0}^d$, the 
product map $H_1^{m_1}\cdots H_d^{m_d} \to H_\bm$ is surjective and $\fp_d^-$-invariant.
Hence, $H_\bm^*$ embeds into the tensor product of all $(H_r^*)^{\T m_r}$, i.e., $H_\bm^*$ is a submodule of $V_\bm$. Since this submodule also has the same highest weight as $V_\bm$, it must coincide with $V_\bm$.
\end{proof}

We now give an alternative proof of the second statement of Theorem \ref{thm:main}.

\begin{thm}
One has an isomorphism of $\msl(V_d)\oplus \mgl(V_{n-d})$-modules
\begin{equation}\label{eq:Vbm}
V_\bm \simeq \bigoplus_{\substack{\la\le\Lambda_\bm\\ \ell(\la)\le d}} 
\Sigma_\la V_d^*\T \Sigma_\la V_{n-d}.
\end{equation}
\end{thm}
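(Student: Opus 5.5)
This alternative proof goes through geometry and collineations rather than through the monomial basis. The plan is to compare the coordinate ring $\bigoplus_\bm V_\bm^*$ of $\Gr^{[d]}$, given by Proposition~\ref{prop:coordring}, with the coordinate ring $C$ of the complete collineation variety $\mathrm{Col}^{[d]}$ from Subsection~\ref{sec:collin}. The decomposition of $C$ is known by Proposition~\ref{prop:colhomr}. The key observation is that $V_\bm^*$ should decompose according to a ``non-homogeneous'' version of $C_\bm$: partitions $\la\le\Lambda_\bm$ of arbitrary size $|\la|\le|\Lambda_\bm|$, rather than only those of size exactly $|\Lambda_\bm|$.

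First I realize $V_\bm^*$ as a space of functions on $\Hom(V_d,V_{n-d})$. By Lemma~\ref{lem:likecol}, the coordinates on $\bP(V^r_{\om_d})=\bP(\Lambda^r)$ pull back under $f_r$ to the minors $\Delta_{I,J}$ with $|I|=|J|\le r$, including the constant $1$ for the empty minor. Since the open orbit $\exp(\fa_d)[v^{[d]}]$ is dense in $\Gr^{[d]}$, restricting to it embeds the degree-$\bm$ component of the multihomogeneous ring into $\bC[\Hom(V_d,V_{n-d})]$. The image is the span of all products $\prod_r\prod_{k=1}^{m_r}\Delta_{I_{r,k},J_{r,k}}$ with $|I_{r,k}|\le r$. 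Introducing variables $t_1,\dots,t_d$, I identify this with the degree-$\bm$ part of the product of ideals $\mathcal J_1^{m_1}\cdots\mathcal J_d^{m_d}$, where $\mathcal J_r$ is generated by $t_r$ times all minors of size at most $r$. The result is a $\mgl(V_d)\oplus\mgl(V_{n-d})$-stable subspace of the polynomial ring, up to a one-dimensional twist coming from the weight of $v_\bm$.

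Next I decompose this space by polynomial degree $e=0,\dots,|\Lambda_\bm|$. The degree-$e$ part is spanned by products in which the sizes $k_{r,k}\le r$ of the chosen minors sum to $e$. For each choice of sizes, the span is the lowest-degree component of the corresponding product of determinantal ideals $\prod_{r,k}\mathcal I_{k_{r,k}}$. By De Concini--Eisenbud--Procesi, exactly as in the proof of Proposition~\ref{prop:colhomr}, this equals $\bigoplus\Sigma_\la V_d\otimes\Sigma_\la V_{n-d}^*$ over $\la$ of size $e$ dominated by the partition whose columns have heights $k_{r,k}$. Summing over all choices of sizes, the degree-$e$ part is the sum over $\la$ with $|\la|=e$ dominated by some partition $\Lambda$ obtained from $\Lambda_\bm$ by shortening columns, the column of height $r$ being replaced by one of height $k\le r$. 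The Cauchy decomposition of the polynomial ring is multiplicity free, so sums of isotypic components are simply unions of index sets, and no multiplicity issues arise.

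The remaining combinatorial step, which I expect to be the main obstacle, is to show that the set of $\la$ with $|\la|=e$ that are dominated by some column-shortening of $\Lambda_\bm$ of total size $e$ coincides with the set of $\la\le\Lambda_\bm$ with $|\la|=e$. One inclusion is clear, since shortening columns only decreases partial row sums. For the converse I would iterate Lemma~\ref{lem:cutcell}-style cell removal: given $\la\le\Lambda_\bm$ with $|\la|<|\Lambda_\bm|$, remove a cell from a suitably chosen column of $\Lambda_\bm$, taking the lowest cell in a row $i$ where the partial-sum inequality is strict from row $i$ onward, and check that dominance persists. This is a standard property of dominance, namely that the lower order ideal is closed under the appropriate lowering moves. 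Dualizing and comparing with \eqref{eq:Vbm} then gives the isomorphism of $\msl(V_d)\oplus\mgl(V_{n-d})$-modules.
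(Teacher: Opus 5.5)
Your proposal is correct and follows essentially the paper's own alternative argument: identify $V_\bm^*$ via Proposition~\ref{prop:coordring} with the span of products of minors of sizes at most $r$ inside $\bC[\Hom(V_d,V_{n-d})][t_1,\dots,t_d]$, then apply the De Concini--Eisenbud--Procesi decomposition of Proposition~\ref{prop:colhomr} to each choice of minor sizes. The combinatorial step you flag is left implicit in the paper (inside the identification $V_\bm^*\simeq\sum_{\Lambda_{\bm'}\le\Lambda_\bm}C_{\bm'}$), and it is easy: if $\la\le\Lambda$ and $|\la|<|\Lambda|$, removing the last cell of row $\ell(\Lambda)$ keeps $\la$ dominated because $\la_1+\dots+\la_k\le|\la|<|\Lambda|=\Lambda_1+\dots+\Lambda_k$ for all $k\ge\ell(\Lambda)$, and iterating gives a subdiagram of $\Lambda_\bm$ (i.e., a column-shortening) of size $|\la|$ that dominates $\la$.
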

\begin{proof}
By Proposition \ref{prop:coordring}, $V_\bm$ is dual to the degree $\bm$
component of the multihomogeneous coordinate ring of the complete truncated Grassmannian
$\Gr^{[d]}$. This ring can be realized explicitly by computing the multihomogeneous coordinates of $\exp(A)\prod_{r\in[d]}[v^r]\in\Gr^{[d]}$ for $A\in\fa_d\simeq\Hom(V_d,V_{n-d})$. It is the subring of the polynomial ring 
$\bC[\Hom(V_d,V_{n-d})][t_1,\dots,t_d]$ spanned by products of the form
\begin{equation}\label{eq:prodDelta}
t_1^{m_1}\dots t_d^{m_d} \Delta^{(1)}_1\dots \Delta^{(1)}_{m_1} \dots \Delta^{(d)}_1 \dots \Delta^{(d)}_{m_d},
\end{equation}
where each $\Delta^{(r)}_i$ is the polynomial function on $\Hom(V_d,V_{n-d})$ given by a minor
of size at most $r$. Recall (see Subsection \ref{sec:collin}) that  for complete 
collineations the $\bm$-th homogeneous 
component $C_\bm$ is spanned by those products of the form \eqref{eq:prodDelta} in which each minor $\Delta^{(r)}_i$ has size exactly $r$. We thus arrive at the following conclusion:
\[
V^*_\bm\simeq \sum_{\bm'\,:\, \Lambda_{\bm'}\le \Lambda_\bm} C_{\bm'}\subset \bC[\Hom(V_d,V_{n-d})][t_1,\dots,t_d]. 
\]
Now, by Proposition \ref{prop:colhomr}, one has
\[
C_\bm \simeq \bigoplus_{\substack{\ell(\la)\le d\\ \la\le \Lambda_\bm\\ |\la|=|\Lambda_\bm|}} 
\Sigma_\la V_d\T \Sigma_\la V_{n-d}^*.
\]
Hence, we obtain \eqref{eq:Vbm}. 
\end{proof}

\end{document}